\newcommand{\vect}{\boldsymbol}
\renewcommand{\vec}{\boldsymbol}
\DeclareMathOperator{\Imag}{Im}
\DeclareMathOperator{\interior}{int}
\DeclareMathOperator{\dist}{dist}
\renewcommand{\Im}{\Imag}
\newtheorem{lemma}{Lemma}[section]
\newtheorem{corollary}{Corollary}[section]
\newtheorem{theorem}{Theorem}[section]
\theoremstyle{remark}
\newtheorem{remark}{Remark}[section]
\theoremstyle{definition}
\newtheorem{definition}{Definition}[section]
\title[Random Walks, Faber Polynomials, Power Methods]{Random Walks, Faber Polynomials and Accelerated Power Methods}
\author{Peter Cowal}
\email{cowalp@oregonstate.edu}
\address{Department of Mathematics, Oregon State University, Corvallis, OR}
\author{Nicholas F. Marshall}
\email{marsnich@oregonstate.edu}
\address{Department of Mathematics, Oregon State University, Corvallis, OR}
\author{Sara Pollock}
\email{s.pollock@ufl.edu}
\address{Department of Mathematics, University of Florida, Gainesville, FL}
\thanks{The third author's work was partially supported by NSF grant DMS 2045059 (CAREER)}
\keywords{Chebyshev polynomials, Faber Polynomials, iterative linear algebra, power method, momentum methods, extrapolation, acceleration}
\begin{document}
\begin{abstract}
In this paper, we construct families of polynomials defined by recurrence relations related to mean-zero random walks. We show these families of polynomials can be used to approximate $z^n$ by a polynomial of degree $\sim \sqrt{n}$ in associated radially convex domains in the complex plane. Moreover, we show that the constructed families of polynomials have a useful rapid growth property and a connection to Faber polynomials.
Applications to iterative linear algebra are presented, including the development of arbitrary-order dynamic momentum power iteration methods suitable for classes of non-symmetric matrices.
\end{abstract}

\maketitle
\section{Introduction} 

Polynomial approximation plays an essential role in the design and analysis of iterative algorithms in numerical linear algebra.
An extrapolation scheme in the form of an iterative method that alternates between applying a matrix and taking linear combinations of the result corresponds to applying a polynomial of a matrix. 
Understanding the action of this polynomial on the eigenvalues of the matrix can allow us to understand the behavior of the algorithm.

Examples of classic algorithms that can be studied from a polynomial perspective include the Chebyshev iteration \cite{golub1961chebyshev}, conjugate gradient \cite{hestenes1952methods}, GMRES \cite{saad1986gmres}, the Lanczos algorithm \cite{lanczos1950iteration}, and the Arnoldi iteration \cite{golub2006arnoldi}. 
In this work, our motivation is twofold. First, we are motivated by the momentum power methods \cite{austin2024dynamically,cowal2025faber,xu2018accelerated}, which accelerate the power method using extra ``momentum'' terms, where in
\cite{austin2024dynamically,cowal2025faber}, the momentum parameters are determined dynamically, inspired by the extrapolation methods of
\cite{nigam2022simple,pollock2021extrapolating}. Second, we are motivated by the random walk perspective, see \eqref{eq:chebyshev-expectation} below, on the recurrence of Chebyshev polynomials, which was introduced in \cite{sachdeva2014faster}. This random walk perspective can be used to prove that $x^n$ is approximately a polynomial of degree $\sim \sqrt{n}$ on $[-1,1]$, see \eqref{eq:approxxn}, which was first established by \cite{newman1976approximation}. 
In this manuscript,
we develop general random walk constructions as a means to obtain efficient approximations of $z^n$ in regions in the complex plane. 

In previous work \cite{cowal2025faber}, we developed this construction
for a specific deltoid region of the complex plane by performing a detailed study of a family of polynomials
satisfying the recurrence $P_{n+1}(z) = (3/2)zP_n(z) - (1/2)P_{n-2}(z)$. The current work takes a more general approach, constructing a class of polynomial families whose recurrences are related to mean-zero random walks. We further develop theoretical results explaining various trade-offs in the construction of these recurrences, as well as results explaining why cusps appear in the regions in the complex plane associated with these recurrences. 

\subsection{Background}\label{subsec:bg}
To provide the underlying intuition for our approach, we briefly describe the random-walk perspective for Chebyshev polynomials introduced by Sachdeva and Vishnoi in
 \cite{sachdeva2014faster}.
The Chebyshev polynomials of the first kind satisfy $T_0(x) = 1$, $T_1(x) = x$, and the recurrence relation 
$$T_{n+1}(x) = 2xT_n(x) - T_{n-1}(x), \quad \text{for} \quad n \ge 1.
$$
This recurrence can equivalently be expressed as
\begin{equation} \label{eq:chebyshev-expectation}
xT_n(x) = \frac{T_{n+1}(x) + T_{n-1}(x)}{2} = \mathbb{E} T_{n + Y}(x),
\quad \text{for} \quad n \ge 1,
\end{equation}
where $Y$ is a random variable satisfying $P(Y = 1) = P(Y = -1) = 1/2$. If we define $T_n(x) := T_{|n|}(x)$ for $n < 0$, then we can iterate \eqref{eq:chebyshev-expectation} to deduce that
\begin{equation} \label{eq:chebrecurrence}
\mathbb{E} T_{Y_1 + Y_2 + \dots + Y_n}(x) = x^n\mathbb{E} T_0(x) = x^n,
\end{equation}
where $Y,Y_1, \dots, Y_n$ are i.i.d. random variables.  Since $Y_1,\ldots,Y_n$ are independent symmetric $\pm 1$ random variables, concentration results imply that 
$$
|Y_1 + Y_2 + \dots + Y_n| \le t \sqrt{n},
$$
with probability at least $1-2 e^{-t^2/2}$. Combining this concentration result with \eqref{eq:chebrecurrence}, the fact that $T_n$ is bounded by $1$ on $[-1,1]$, and the triangle inequality, one can show for $n \in \mathbb{N}$ and $t > 0$, that we have
\begin{equation} \label{eq:approxxn}
\left|x^n - \sum_{k=0}^{\lfloor t \sqrt{n} \rfloor} \alpha_k T_k(x) \right| \le 2 e^{-t^2/2}, \qquad \text{for} \quad x \in [-1,1],
\end{equation}
where the coefficients $\alpha_k = \mathbb{P}(|Y_1+\cdots + Y_n| = k)$, see \cite[Theorem 3.3]{sachdeva2014faster}. 
This result says that $x^n$ is approximately 
a polynomial of degree $\sim \sqrt{n}$ on $[-1,1]$.
A closely related property of the Chebyshev polynomials is the following rapid growth property: 
\begin{equation} \label{eq:chebgrowfast}
T_k(1 + \varepsilon) \ge \frac{1}{2} (1 + \sqrt{2\varepsilon})^k,
\end{equation}
for all $\varepsilon > 0$ and $k \in \mathbb{N}$. This rapid growth property can be viewed as a different expression of the same idea that a polynomial of degree $\sim \sqrt{n}$ can act like $x^n$ in the sense that $T_k(x)$ grows roughly like $x^n$ at $x = 1+\varepsilon$ when $k = \sqrt{n}$.

The polynomials introduced in this paper
generalize the probabilistic argument of \eqref{eq:chebyshev-expectation}-\eqref{eq:approxxn} to approximate $z^n$ by a polynomial of degree $\sim \sqrt{n}$ in regions in the complex plane. Note that the Chebyshev polynomial argument for \eqref{eq:approxxn}
does not immediately generalize to the complex plane because the Chebyshev polynomials $T_n(z)$ grow exponentially with $n$ at any point $z$ with a non-zero imaginary part. While our construction is motivated by generalizing \eqref{eq:approxxn}, our construction naturally gives rise to a rapid growth property similar to \eqref{eq:chebgrowfast}. We next make the notion of rapid growth precise. 
\begin{definition} \label{defn:rapidgrowth}
We say a family of polynomials $P_n(z)$, $n \ge 0$, that is normalized such that $P_n(1) =1$, has rapid growth past $1$ if for all $\varepsilon > 0$ and $n \in \mathbb{N}$, we have
\begin{equation} \label{eq:defrapidgrowth}
|P_n(1+\varepsilon)| \ge c \left(1 + c' \sqrt{\varepsilon} \right)^n,
\end{equation}
for some absolute constants $c,c' > 0$.
\end{definition}
For example, the Chebyshev polynomials of the first kind $T_n(x)$ have rapid growth at $1$ due to \eqref{eq:chebgrowfast}. In contrast, the monomial $x^n$ does not have rapid growth at $1$ since for any $c' > 0$, there exists arbitrarily small $\varepsilon > 0$ such that the function $(1+\varepsilon)^n$ grows slower than $(1 + c'\sqrt{\varepsilon})^n$.  The applications to iterative linear algebra considered in this paper are based on this rapid growth property. 

\subsection{Motivation}
Algorithms based on the Chebyshev polynomials have also been adapted to work on regions of the complex plane, for instance in \cite{calvetti1994adaptive,golub1961chebyshev,manteuffel1977tchebychev}, and often in the context of large-scale eigenvalue problems in conjunction with the Arnoldi iteration
\cite{Ho90,HCB90,saad84,saad2011numerical,sadkane93}, 
or the Davidson method 
\cite{ZhSa07}.
The Chebyshev polynomials have constant magnitude on elliptical level lines in the complex plane,  and much of the literature focuses on computational methods for determining the parameters of an approximately optimal ellipse,  thereby obtaining a reasonably efficient method.  However, the recurrence of the scaled and dilated Chebyshev polynomials no longer corresponds to a mean-zero probability distribution, which we show produces sub-optimal growth rates when $|z| > 1$. From another point of view, it was shown in \cite{FiFr91} that Chebyshev polynomials on the complex plane do not always feature the optimality properties they have on the real line. In this work, we demonstrate that polynomials based on higher-order recurrence relations can provide an advantage.
While our approach is inspired by a random walk perspective of \cite{sachdeva2014faster}, it also has a connection to Faber polynomials, see \cite{suetin1998series} for background.
We note that Faber polynomials have previously been considered in the context of numerical linear algebra, in both general \cite{eiermann1989semiiterative,eiermann1989hybrid,niethammer1983analysis} and specific \cite{ramos2021preconditioning} settings, and often to approximate functions of non-Hermitian matrices \cite{benzi2007decay,pozza2019inexact}.

\subsection{Main contributions}
The main contributions of this paper are as follows:

\begin{enumerate}
\item \emph{Construct families of random walk-based polynomials:}
We construct families of polynomials based on integer-valued mean-zero random variables which generalize the probabilistic constructions of \cite{cowal2025faber,sachdeva2014faster}.

\item \emph{Establish boundedness, rapid growth, and fast approximation properties:}
We prove boundedness and rapid growth properties of the constructed families of polynomials in Theorems \ref{thm:charcurve} and \ref{thm:boundedrapid}, and we use these polynomials to show that $z^n$ can be approximated by a polynomial of degree $\sim \sqrt{n}$ in corresponding domains in the complex plane in 
Theorem  \ref{thm:approxzn}.

\item \emph{Demonstrate advantage of cusped regions over smooth regions:} The polynomials we construct are bounded on regions with cusps and have a useful rapid growth property. We prove that polynomials that are bounded on an ellipse (or more generally in any region with a smooth boundary) cannot have this rapid growth property in Theorem \ref{ellipseslow}.

\item \emph{Develop novel static and dynamic momentum power methods:}
We generalize the static momentum power method of \cite{xu2018accelerated} and the dynamic momentum power methods of \cite{austin2024dynamically,cowal2025faber}  in  Algorithms \ref{alg:generalized-momentum} and  \ref{alg:dynamic-generalized-momentum}, which we demonstrate apply to a wider range of non-symmetric operators.   We prove theoretical convergence rates of these algorithms in Theorem
\ref{thm:general-momentum-convergence}, and conduct numerical experiments to demonstrate their advantages.
\end{enumerate}

\subsection{Preliminaries}
In what follows, we
construct families of polynomials that satisfy recurrence relations related to the expectation of an integer-valued mean-zero random variable, see, for example, \eqref{eq:chebyshev-expectation} above. As a consequence, iterating the recurrence relation is related to a random walk on the integers, which concentrates around zero. As the depth of the recurrence increases, the variance of the random walk increases, but as a trade-off, the region in the complex plane where these polynomials are bounded increases in area, asymptotically filling in the unit ball, e.g., see Figure \ref{fig:hypocycloids}.  
Our theoretical and numerical results highlight how this trade-off can be leveraged to develop efficient numerical methods for nonsymmetric problems. 

 We define the family of polynomials $P_n(z)$, $n \ge 0$ by $P_k(z) = z^k$ for $k \in \{0,1,\ldots,m-1\},$ and
\begin{equation} \label{recurrence}
P_{k+1}(z) = \frac{z - p_1}{p_0} P_k(z) - \sum_{j=2}^m \frac{p_j}{p_0} P_{k+1-j}(z), \quad \text{for} \quad k \ge m-1.
\end{equation}
 Here, $p = (p_0,\ldots,p_m) \in \mathbb{R}^{m+1}$ is a probability vector with non-negative entries $p_0,\ldots,p_m \ge 0$, that sum to $p_0 + \cdots + p_m = 1$, which we assume satisfies 
 \begin{equation} \label{meanzero}
 p_0 > 0 \quad \text{and} \quad
\sum_{j=0}^m (1-j) p_j = 0.
\end{equation}
Note that \eqref{recurrence} can be rearranged as
\begin{align}\label{eqn:rerecur}
\sum_{j=0}^{m} p_j P_{k+1-j}(z) = z P_k(z),
\end{align}
and  \eqref{meanzero} can be interpreted as a mean-zero condition for the random variable $Y$ satisfying
$\mathbb{P}(Y = 1-j) = p_j$ for $j \in \{0,\ldots,m\}$. By the definition of $Y$, we have
\begin{equation} \label{eq:expectationprop}
\mathbb{E} P_{k + Y}(z) = z P_k(z), \quad \text{for} \quad k \ge m-1.
\end{equation}
We also consider the variance of $Y$.  Since $\mathbb{E}Y = 0$, this is given by 
\begin{equation} \label{eq:variance}
\sigma^2 := \mathbb{E}Y^2 = \sum_{j = 0}^m (1 - j)^2 p_j.
\end{equation}
We later demonstrate that $\sigma$ governs the rate of rapid growth of $P_n(z)$ past $1$.

\subsubsection{Stability regions in the complex plane}

For each fixed $z \in \mathbb{C}$, the recurrence \eqref{recurrence}
is an $m$-th order difference equation with characteristic polynomial
\begin{equation} \label{eq:charpolyintro}
Q_z(r) = r^m + \frac{p_1-z}{p_0} r^{m-1} + \sum_{j=2}^m \frac{p_j}{p_0} r^{m-j},
\end{equation}
whose roots determine the growth rate of the recurrence, 
see, for example, \cite[Section 2.3]{Elaydi2005}. Solving $Q_z(r) =0$ for $z$ gives
\begin{equation} \label{eq:conformalmap1}
z = \sum_{j=0}^m p_j r^{1-j},
\end{equation}
which is a formula satisfied by the roots of $Q_z$. Hence, a necessary condition for the characteristic polynomial \eqref{eq:charpolyintro} to have a root of magnitude $1$ can be determined by setting $r = e^{i t}$ for $t \in [0,2\pi]$, which gives
\begin{equation} \label{eq:curveformula}
z(t) = \sum_{j=0}^m p_j e^{i(1-j) t},
\quad \text{for} \quad t \in [0,2\pi],
\end{equation}
where we write $z = z(t)$ to indicate the interpretation of $z$ as a function of $t$, see Figure \ref{fig:examplescurve}. In Theorem \ref{thm:boundedrapid}  we prove that the family of polynomials $P_n(z), n \ge 0$ is bounded in the region enclosed by \eqref{eq:curveformula}, as one might expect given how this region was constructed.  

\begin{remark}[Connection to Faber polynomials]
Our construction of random-walk-based polynomials starts with the recurrence \eqref{recurrence}, and then uses its  characteristic polynomial  \eqref{eq:charpolyintro} to determine the curve \eqref{eq:curveformula} that encloses the region where the polynomials are bounded. The Faber polynomials are constructed by the reverse process: given a region in the complex plane, the Faber polynomials are defined using a Riemann map from the exterior of the unit disk to the exterior of the given region. When the Riemann map is a rational function, the resulting Faber polynomials satisfy a recurrence relation, which can be determined from that map.

In the following, we define Faber polynomials to make this connection precise; see \cite{suetin1998series} for a detailed exposition of the Faber polynomials.  Let $\overline{\mathbb{C}}$  denote the extended complex plane, and $\Omega$ be a compact subset of the complex plane that is not a single point, whose complement $\overline{\mathbb{C}} \setminus \Omega$ is simply connected. By the Riemann mapping theorem, there is a unique conformal map $\psi : \{z \in \overline{\mathbb{C}}: |z|>1\} \to \overline{\mathbb{C}} \setminus \Omega$ such that $\psi(\infty) = \infty$ and $\psi'(\infty) >0$. The Faber polynomials $F_n(z)$, $n \ge 0$ associated  with the domain $\Omega$ are defined by the generating function
\begin{equation} \label{eq:generatefaber}
\frac{\psi'(r)}{\psi(r) - z} = \sum_{n=0}^\infty F_n(z) r^{-n-1} \quad \text{for} \quad |r| > 1 \quad \text{and} \quad z \in \Omega.
\end{equation}
To observe the connection  to the family of polynomials $P_n(z)$ defined above, the right-hand side of 
\eqref{eq:conformalmap1} defines a function
\begin{equation} \label{eq:conformalmap}
\psi(r) = \sum_{j=0}^m p_j r^{1-j},
\end{equation}
which is a conformal map from the exterior of the unit disc $\{z \in \overline{\mathbb{C}} : |z| > 1\}$ onto its image  $\psi(\{z \in \overline{\mathbb{C}} : |z| > 1\}) = \overline{\mathbb{C}} \setminus \Gamma$, where $\Gamma$ is the closure of the region enclosed by the curve  \eqref{eq:curveformula}. Note that $\psi(\infty) = \infty$ and $\psi'(\infty) > 0$.  Hence, the Faber polynomials for $\Gamma$ can be generated by substituting  \eqref{eq:conformalmap} into \eqref{eq:generatefaber}. Using \eqref{eq:generatefaber}, one can show that the Faber polynomials on the region enclosed by
$$
z(t) = \sum_{j=0}^m p_j e^{i(1-j) t},
\quad \text{for} \quad t \in [0,2\pi],
$$
satisfy the recurrence \eqref{recurrence}. 
This can be observed by multiplying both sides of \eqref{eq:generatefaber} by 
$\psi(r) -z$,
and applying the definition of \eqref{eq:conformalmap}. Since the largest negative power of $\psi'(r)$ is $r^{-m}$, the coefficients of $r^{-n-1}$ must vanish for $n \ge m$, which corresponds to $F_n(z)$ satisfying the same recurrence  \eqref{eqn:rerecur} that $P_n(z)$ satisfies; also see  \cite[Chapter 2]{suetin1998series}.
Hence, the polynomials we consider are essentially an instance of Faber polynomials (the only difference is that we consider slightly different initial conditions adapted to our purposes).  Finally, to avoid overstating or mischaracterizing the link to Faber polynomials, we note that many polynomials defined by a recurrence can, in a sense, be viewed as Faber polynomials (up to the choice of initial conditions) for some domain in the complex plane defined by a rational Riemann map. 
We make use of the corresponding
domain in the complex plane both in our theoretical results and computational methods.
\end{remark}

\begin{figure}[h!]
\centering
\begin{tikzpicture}[scale=1.5]
\draw (-1.25, 0) -- (1.25, 0);
\draw (0, -1.25) -- (0, 1.25);
\draw (1, 0) node[anchor=south west]{$1$};
\draw (0, 1) node[anchor=south west]{$i$};
\draw [dashed] (0,0) circle (1);
\draw [ultra thick, domain = 0:360, samples = 60, line join=round]
    plot[smooth] ({7/12*cos(\x) + 3/12*cos(-\x) + 2/12*cos(-2*\x)}, {7/12*sin(\x) + 3/12*sin(-\x) + 2/12*sin(-2*\x)});
\end{tikzpicture}
\quad
\begin{tikzpicture}[scale=1.5]
\draw (-1.25, 0) -- (1.25, 0);
\draw (0, -1.25) -- (0, 1.25);
\draw (1, 0) node[anchor=south west]{$1$};
\draw (0, 1) node[anchor=south west]{$i$};
\draw [dashed] (0,0) circle (1);
\draw [ultra thick, domain = 0:360, samples = 60, line join=round]
    plot[smooth] ({2/3*cos(\x) + 1/3*cos(-2*\x)}, {2/3*sin(\x) + 1/3*sin(-2*\x)});
\end{tikzpicture}
\quad
\begin{tikzpicture}[scale=1.5]
\draw (-1.25, 0) -- (1.25, 0);
\draw (0, -1.25) -- (0, 1.25);
\draw (1, 0) node[anchor=south west]{$1$};
\draw (0, 1) node[anchor=south west]{$i$};
\draw [dashed] (0,0) circle (1);
\draw [ultra thick, domain = 0:360, samples = 60, line join=round]
    plot[smooth] ({5/8*cos(\x) + 2/8*cos(-\x) + 1/8*cos(-3*\x)}, {5/8*sin(\x) + 2/8*sin(-\x) + 1/8*sin(-3*\x)});
\end{tikzpicture}
    \caption{The curve \eqref{eq:curveformula} for $p = (\frac{7}{12},0,\frac{3}{12},\frac{2}{12})$, $(\frac{2}{3},0,0,\frac{1}{3})$, and $(\frac{5}{8},0,\frac{2}{8},0,\frac{1}{8})$ (ordered left to right). 
    }
    
    \label{fig:examplescurve}
\end{figure}
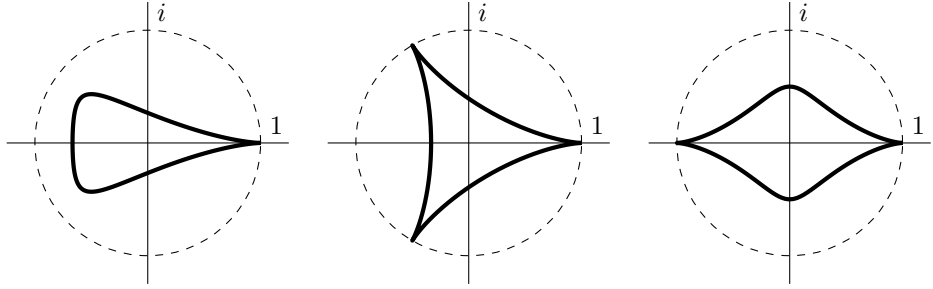

\subsubsection{Hypocycloids} \label{sec:hypocycloid}
In the special case where $p_0$ and $p_m$ are the only non-zero entries of $p = (p_0,\ldots,p_m)$, for $m \ge 2$, the condition \eqref{meanzero} implies
$$
p_0 = \frac{m-1}{m} \quad \text{and} \quad p_m = \frac{1}{m}.
$$
We denote the family of polynomials corresponding to these probabilities as $P_n^{(m)}(z)$, $n \ge 0$. 
For 
these polynomials, 
recurrence \eqref{recurrence} 
takes the form
\begin{equation}\label{eqn:genrec}
P^{(m)}_{n+1}(z) = \frac{m}{m-1}z P_{n}^{(m)}(z) - \frac{1}{m-1} P^{(m)}_{n-m+1}(z), \quad \text{for} \quad n \ge m-1.
\end{equation}
Furthermore, in this case, the curve \eqref{eq:curveformula} has the form
\begin{equation} \label{eq:gamma-higher-order}
z^{(m)}(t) = \frac{m-1}{m}e^{it} + \frac{1}{m}e^{(1-m)it} \quad \text{for}\ t \in [0, 2\pi],
\end{equation}
which are hypocycloid regions in the complex plane, see Figure \ref{fig:hypocycloids}. In the following, we refer to $P_n^{(m)}$ as the hypocycloid polynomials.

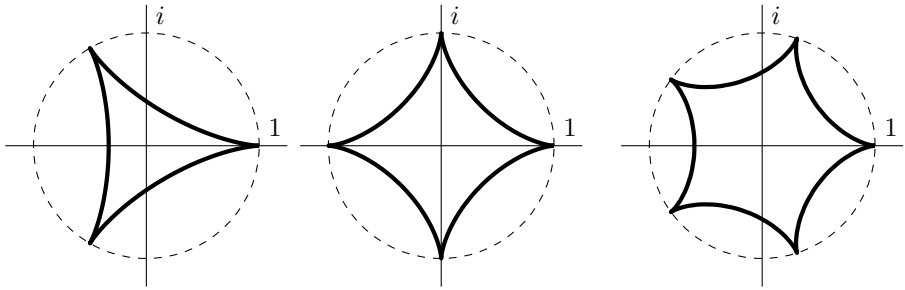
\begin{figure}[h!]
\centering
\begin{tikzpicture}[scale=1.5]
\draw (-1.25, 0) -- (1.25, 0);
\draw (0, -1.25) -- (0, 1.25);
\draw (1, 0) node[anchor=south west]{$1$};
\draw (0, 1) node[anchor=south west]{$i$};
\draw [dashed] (0,0) circle (1);
\draw [ultra thick, domain = 0:360, samples = 60, line join=round]
    plot[smooth] ({2/3*cos(\x) + 1/3*cos(-2*\x)}, {2/3*sin(\x) + 1/3*sin(-2*\x)});
\end{tikzpicture}
\quad
\begin{tikzpicture}[scale=1.5]
\draw (-1.25, 0) -- (1.25, 0);
\draw (0, -1.25) -- (0, 1.25);
\draw (1, 0) node[anchor=south west]{$1$};
\draw (0, 1) node[anchor=south west]{$i$};
\draw [dashed] (0,0) circle (1);
\draw [ultra thick, domain = 0:360, samples = 60, line join=round]
    plot[smooth] ({3/4*cos(\x) + 1/4*cos(-3*\x)}, {3/4*sin(\x) + 1/4*sin(-3*\x)});
\end{tikzpicture}
\quad
\begin{tikzpicture}[scale=1.5]
\draw (-1.25, 0) -- (1.25, 0);
\draw (0, -1.25) -- (0, 1.25);
\draw (1, 0) node[anchor=south west]{$1$};
\draw (0, 1) node[anchor=south west]{$i$};
\draw [dashed] (0,0) circle (1);
\draw [ultra thick, domain = 0:360, samples = 60, line join=round]
    plot[smooth] ({4/5*cos(\x) + 1/5*cos(-4*\x)}, {4/5*sin(\x) + 1/5*sin(-4*\x)});
\end{tikzpicture}
    \caption{The curve \eqref{eq:curveformula} for $p = (\frac{2}{3},0,\frac{1}{3})$, $(\frac{3}{4},0,0,\frac{1}{4})$, and $(\frac{4}{5},0,0,0,\frac{1}{5})$ (ordered left to right). These curves are called a 
    $3$-cusped hypocycloid (deltoid), a $4$-cusped hypocycloid (astroid), and a $5$-cusped hypocycloid, respectively.}
    \label{fig:hypocycloids}
\end{figure}

\begin{remark}\label{rem:p1zero}
In the remainder, we will make the assumption that $p_1 = 0$.  As can be observed from \eqref{recurrence}, this parameter serves essentially as a ``shift,'' and its use in algorithms for applications such as the interior eigenvalue problem \cite{FaSa12} may merit separate investigation. 
Where it simplifies the presentation, $p_1$ may appear in summations such as \eqref{eqn:rerecur} and \eqref{eq:variance}, and will be understood to be zero.
\end{remark}

\subsection{Main analytic results}

Our first main result characterizes 
the region enclosed by the 
curve  $z(t)$ 
defined in \eqref{eq:curveformula}. We refer to the region enclosed by this curve as the stability region for the family of polynomials $P_n(z)$, $n \ge 0$.

Recall that $z(t)$ and the family of polynomials $P_n(z)$ are defined using a probability vector $p = (p_0,\ldots,p_m) \in \mathbb{R}^{m+1}$, which satisfies \eqref{meanzero}, which states $\sum_{j=0}^m (1-j) p_j = 0$.
When $p_0 = 1/2$ and $p_2 = 1/2$, the family of polynomials $P_n(z)$ is the family of Chebyshev polynomials of the first kind $T_n$, see Section 
\ref{subsec:bg}. In this case, the curve $z(t)$ defined in \eqref{eq:curveformula}, is given by
$$
z(t) = \frac{1}{2} e^{i t} + \frac{1}{2} e^{- i t} = \cos(t),
\quad \text{for} \quad t \in [0,2\pi],
$$
whose range is $z([0,2\pi]) = [-1,1]$, which is the set in $\mathbb{C}$ where the Chebyshev polynomials $T_n$ are bounded. The following result characterizes 
the stability region under the assumption $p_j > 0$ for some $j \ge 3$, which avoids the degenerate Chebyshev case.

\begin{theorem}[Description of stability region] 
\label{thm:charcurve}
Let $\gamma$ be the closed curve parameterized by $z(t)$ for $t \in [0,2\pi]$ defined in \eqref{eq:curveformula}, for a probability vector $p = (p_0,\ldots,p_m)$ satisfying \eqref{meanzero} with $p_1 = 0$ and $p_j > 0$ for some $j \ge 3$. Then, $\gamma$ is continuously differentiable except at $n$ cusps located at $e^{2\pi i k/n}$ for $k \in \{0,\ldots,n-1\}$, where 
$$
n =\gcd(\{j \in \{2,\ldots,m\} : p_j > 0\}).
$$
Moreover, the region enclosed by $\gamma$ is a radially convex set
with non-empty interior.
\end{theorem}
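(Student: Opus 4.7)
My plan is to exploit the parameterization $z(t)=\psi(e^{it})$ with $\psi(r):=\sum_{j=0}^{m}p_j\,r^{1-j}$, so that the geometry of $\gamma$ is governed by the rational function $\psi$ on and near the unit circle. The mean-zero identity \eqref{meanzero} in the reformulated form $p_0=\sum_{j\ge 2}(j-1)p_j$ (after using $p_1=0$) supplies the critical cancellation at each step. For no self-intersections, I would establish injectivity of $\psi$ on $\{|r|\ge 1\}$: if $\psi(r_1)=\psi(r_2)$ with $r_1\ne r_2$, the telescoping identity $r_1^{1-j}-r_2^{1-j}=-(r_1-r_2)\sum_{\ell=1}^{j-1}r_1^{-(j-\ell)}r_2^{-\ell}$ cancels $r_1-r_2$ to produce
\[
p_0 \;=\; \sum_{j=2}^{m} p_j \sum_{\ell=1}^{j-1} r_1^{-(j-\ell)} r_2^{-\ell},
\]
and for $|r_1|,|r_2|\ge 1$ the right side has modulus at most $\sum_{j\ge 2}(j-1)p_j=p_0$, with strict inequality unless both lie on the unit circle. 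On $|r_1|=|r_2|=1$, equality in the triangle inequality forces $r_1^{-(j-\ell)}r_2^{-\ell}=1$ for every $(j,\ell)$ with $p_j>0$; for any $j\ge 3$ with $p_j>0$, comparing $\ell=1$ and $\ell=2$ gives $r_1^{j}=1$ and $r_2=r_1^{-(j-1)}=r_1$, contradicting $r_1\ne r_2$. (The degenerate case where no such $j$ exists forces $p_0=p_2=1/2$ and collapses $\gamma$ to a segment, and is excluded.)

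\emph{Cusps.} Differentiation gives $z'(t)=ie^{it}\psi'(e^{it})$, so singular parameters correspond to zeros of $\psi'$ on $|r|=1$. Taking real parts of $\psi'(e^{it})=p_0-\sum_{j\ge 2}(j-1)p_j e^{-ijt}=0$ and applying the mean-zero identity yields $\sum_{j\ge 2}(j-1)p_j\bigl(1-\cos(jt)\bigr)=0$, a sum of non-negative terms; vanishing forces $\cos(jt)=1$, hence $jt\in 2\pi\mathbb{Z}$, for every $j\ge 2$ with $p_j>0$. A B\'ezout argument then shows the common solutions in $[0,2\pi)$ are exactly $t_k=2\pi k/n$ for $k=0,\ldots,n-1$, with $n$ the gcd of the corresponding index set; substituting back gives $z(t_k)=e^{it_k}\sum_j p_j=e^{2\pi i k/n}$. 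To verify that these are genuine cusps, I would use the characteristic-function representation $z(t)=\mathbb{E}\,e^{iYt}$ with $\mathbb{P}(Y=1-j)=p_j$ to get $z^{(\ell)}(t_k)=i^\ell e^{it_k}\,\mathbb{E}Y^\ell$; Taylor expanding at $t_k$ then yields $z(t_k+s)-z(t_k)=e^{it_k}\bigl[(\mathbb{E}\cos(Ys)-1)+i\,\mathbb{E}\sin(Ys)\bigr]$, an even real series beginning at $-\tfrac{\sigma^2}{2}s^2$ (nonzero since $\sigma>0$) plus a purely imaginary odd series of order at least $s^3$, which is the standard cusp normal form.

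\emph{Radial convexity.} I would show $\tfrac{d}{dt}\arg z(t)\ge 0$ with equality only at the cusps; combined with total winding number $1$ around the origin, this forces each ray from $0$ to meet $\gamma$ exactly once. Since $z'/z=i\,r\psi'(r)/\psi(r)$, one has $\tfrac{d}{dt}\arg z=\Re(r\psi'/\psi)$. Multiplying numerator and denominator by $\overline{\psi(r)}=\sum_k p_k r^{k-1}$, symmetrizing in $(j,k)$, and using $\sum p_j=1$ together with $\sum j p_j=1$ (an immediate consequence of mean-zero), I arrive at
\[
\Re\!\left(\frac{r\psi'(r)}{\psi(r)}\right) \;=\; \frac{1}{2|\psi(e^{it})|^2}\sum_{j,k=0}^{m}(2-j-k)\,p_jp_k\bigl(\cos((k-j)t)-1\bigr),
\]
after recognizing the cancellation $\sum_{j,k}(2-j-k)p_jp_k=0$. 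Because $p_1=0$, each pair with $p_jp_k>0$ has $j,k\in\{0\}\cup\{2,\ldots,m\}$, and for every such pair either $(2-j-k)\le 0$ with $(\cos-1)\le 0$ (yielding a non-negative product) or at least one of the two factors vanishes. Hence every summand is non-negative, and equality is characterized by $jt\in 2\pi\mathbb{Z}$ for all $j\ge 2$ with $p_j>0$, which is exactly the cusp condition. The winding number equals $1$ by the argument principle applied to $\psi$ on $\{|r|<1\}$ combined with injectivity of $\psi$ on $\{|r|>1\}$ (ruling out winding $\le 0$, since the latter would force $\tfrac{d}{dt}\arg z\equiv 0$ on a full period).

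The step I expect to be most delicate is the boundary-injectivity analysis on $|r|=1$: pushing the triangle-inequality equality through to $r_1=r_2$ requires selecting a pair $(j,\ell)$ with $j\ge 3$ and $p_j>0$, and the degenerate case in which $\gamma$ fails to be a genuine curve must be separated out. The symmetric reformulation used for radial convexity is easy to verify once the identity $\sum_{j,k}(2-j-k)p_jp_k=0$ is spotted, but finding the right arrangement of indices so that every summand has a definite sign is the essential insight.
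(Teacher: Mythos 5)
Your proof is correct and, on two of the three claims, takes a genuinely different route from the paper's. The cusp-location step is essentially the paper's Lemma~\ref{locationcusps}: taking real parts of $\psi'(e^{it})=0$ and forcing each term of a non-negative sum to vanish is the same mechanism the paper expresses via the equality case of the triangle inequality applied to $p_0=\sum_{j\ge 2}(j-1)p_je^{-ijt}$, and the B\'ezout/gcd conclusion is identical. You go further by actually verifying that the singularities are cusps: the paper's Lemma~\ref{locationcusps} asserts this without proof, while your characteristic-function expansion $z(t_k+s)-z(t_k)=e^{it_k}\bigl(\mathbb{E}e^{iYs}-1\bigr)$ produces the semicubical normal form cleanly (you should add a line noting $\mathbb{E}Y^3=\sum_j(1-j)^3p_j\ne 0$ whenever some $p_j>0$ with $j\ge 3$, guaranteeing a nonzero leading odd term). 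For radial convexity you genuinely diverge: the paper's Lemma~\ref{lem:smoothradiallyconvex} decomposes $p=\sum_k w_k p^{(k)}$ into a convex combination of hypocycloid probability vectors and asserts, without elaboration, that radial convexity passes to the pointwise convex combination $z(t)=\sum_k w_k z^{(k)}(t)$, whereas your closed form $\tfrac{d}{dt}\arg z = \tfrac{1}{2|\psi|^2}\sum_{j,k}(2-j-k)p_jp_k\bigl(\cos((k-j)t)-1\bigr)$ together with the observation that $p_1=0$ makes every summand non-negative is explicit, self-contained, and makes visible exactly where the hypothesis $p_1=0$ enters (when $p_2>0$ the coefficient of the $(0,2)$ pair vanishes, so recovering the $2t\in 2\pi\mathbb{Z}$ constraint at equality requires combining the $(0,k)$ and $(2,k)$ pairs for some $k\ge 3$ --- worth one sentence). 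Finally, the paper never explicitly argues no self-intersections; your injectivity of $\psi$ on $\{|r|\ge 1\}$ via the telescoping identity and the triangle-inequality equality case fills that gap cleanly, correctly isolating the degenerate Chebyshev case $p=(1/2,0,1/2)$ where the curve collapses to a segment.
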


The proof of Theorem \ref{thm:charcurve} is given in Section \ref{sec:proofofthmcharcurve}. The fact that the cusps are characterized by the greatest common divisor of the indices $j$ of the non-zero probabilities for $j \ge 2$ can be seen in Figure \ref{fig:examplescurve}; also see Figure \ref{figwherearecusps} below. 

Our second main result shows that $P_n(z)$ is bounded on the region enclosed by the curve $\gamma$ and has a rapid growth property past $1$ in the sense of Definition \ref{defn:rapidgrowth}.

\begin{theorem}[Boundedness and rapid growth] \label{thm:boundedrapid}
Let $P_n(z)$, $n \ge 0$ be the family of polynomials defined by \eqref{recurrence} for a probability vector $p = (p_0,\ldots,p_m)$ that satisfies \eqref{meanzero} with $p_1 = 0$. Let $\Gamma$ denote the closure of the region enclosed by the curve $\gamma$ parameterized by \eqref{eq:curveformula}. Then,
$$
|P_n(z)| \le C \quad \text{for all} \quad z \in \Gamma \quad \text{and} \quad n \ge 0,
$$
and
$$
|P_n(1 + \varepsilon)| \ge c (1 + \sigma^{-1} \sqrt{2\varepsilon})^n, \quad \text{for all} \quad \varepsilon > 0 \quad
\text{and} \quad n \ge 0,
$$
where $C,c > 0$ are  constants that only depend on $p$, and 
$\sigma$ is defined in \eqref{eq:variance}.
\end{theorem}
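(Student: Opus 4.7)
The plan is to exploit the factorization
$$p_0 Q_z(r) = r^{m-1}(\psi(r) - z),$$
which identifies the $m$ roots of the characteristic polynomial of \eqref{recurrence} with the solutions of $\psi(r) = z$, where $\psi(r) = \sum_j p_j r^{1-j}$ is the conformal map from \eqref{eq:conformalmap}. Both halves of the theorem then reduce to tracking these roots as functions of $z$.

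\emph{Boundedness.} Since $\psi$ is a conformal bijection of $\{|r|>1\}$ onto the exterior of $\Gamma$, no root of $Q_z$ can lie outside the closed unit disk when $z\in\Gamma$, so every root $r_i(z)$ satisfies $|r_i(z)|\le 1$. Away from the cusps of $\gamma$ the roots are distinct, and I would invert the $m\times m$ Vandermonde system coming from the initial data $P_k(z)=z^k$, $0\le k<m$, to write $P_n(z) = \sum_i c_i(z)\, r_i(z)^n$ with coefficients depending continuously on $z$. The main technical obstacle is at the cusps characterized by Theorem~\ref{thm:charcurve}, where two roots coalesce on $|r|=1$ and the Vandermonde inversion degenerates. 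I would handle this by passing to the confluent expansion $P_n(z)=(c(z)+n\,c'(z))\,r(z)^n+\cdots$ near a cusp and noting that $|r(z)|\le 1$ prevents the polynomial factor (determined by Hermite interpolation data at the repeated root) from growing exponentially; a continuity argument on the compact set $\Gamma$ then supplies the uniform constant $C$.

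\emph{Rapid growth.} The mean-zero condition yields $\psi(1)=1$, $\psi'(1)=0$, and $\psi''(1)=\sigma^2>0$, so $r=1$ is a double root of $Q_1(r)$ that splits, for $z=1+\varepsilon$, into a pair $r_\pm(\varepsilon)\approx 1\pm\sigma^{-1}\sqrt{2\varepsilon}$. The sharp bound $r^*(\varepsilon)\ge 1+\sigma^{-1}\sqrt{2\varepsilon}$ on the real root $r^*>1$ of $\psi(r)=1+\varepsilon$ would follow from the global inequality
$$\psi(1+s)\le 1+\tfrac{\sigma^2}{2} s^2 \quad \text{for } s\ge 0,$$
which reduces to $\psi''(r)\le\sigma^2$ on $[1,\infty)$: this is immediate from $\psi''(r)=\sum_{j\ge 2}p_j\,j(j-1)\,r^{-j-1}$ together with the identity $\sum_{j\ge 2}p_j\,j(j-1)=\sigma^2$. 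Since $\psi$ is strictly increasing on $[1,\infty)$ (a direct consequence of $p_1=0$), integrating and inverting gives the root bound for every $\varepsilon>0$.

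To turn this into the lower bound on $|P_n(1+\varepsilon)|$, I would compute the coefficient of $(r^*)^n$ in $P_n(1+\varepsilon)=\sum_i c_i\, r_i^n$ via Cramer's rule,
$$c^*(\varepsilon)=\frac{Q_{1+\varepsilon}(1+\varepsilon)}{(1+\varepsilon-r^*)\,Q'_{1+\varepsilon}(r^*)},$$
together with the analogous expression for the mirror root $r_-$. Using the factorization along with $\psi(1+\varepsilon)-(1+\varepsilon)=-\varepsilon+O(\varepsilon^2)$ and $\psi'(r^*)=\sigma^2(r^*-1)+O((r^*-1)^2)$ yields $c^*(\varepsilon),\,c_-(\varepsilon)\to 1/2$ as $\varepsilon\to 0$, so locally $P_n(1+\varepsilon)\approx \tfrac12\bigl((1+\delta)^n+(1-\delta)^n\bigr)\ge \tfrac12(1+\delta)^n$ with $\delta=\sigma^{-1}\sqrt{2\varepsilon}$, while the remaining $m-2$ roots lie strictly inside $|r|<1$ at $z=1$ and contribute only $O(\rho^n)$ with $\rho<1$. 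For $\varepsilon$ bounded away from $0$, continuity and the cruder bound $r^*\ge 1+\varepsilon/p_0$ (from $\psi(r)\le p_0 r+(1-p_0)$ on $[1,\infty)$) close the remaining range. I expect the main hurdle to be the cusp analysis in the boundedness half — uniform control of the confluent Vandermonde coefficients as two roots merge on $|r|=1$ — since the rapid growth estimate otherwise reduces to elementary Taylor analysis of $\psi$ on the positive real axis.
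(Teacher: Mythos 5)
Your proposal is largely on target and overlaps substantially with the paper's argument, but it diverges in one significant place and leaves one step underspecified.

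The factorization $p_0 Q_z(r)=r^{m-1}(\psi(r)-z)$, the identification of roots with preimages $\psi^{-1}(z)$, and the Taylor-expansion route to the root estimate are all essentially the paper's approach (Lemmas~\ref{lem:roughlocation} and~\ref{thm:root-estimate}). Your observation that $\psi''(r)\le\sigma^2$ on $[1,\infty)$ gives $\psi(1+s)\le 1+\tfrac{\sigma^2}{2}s^2$ is a mild repackaging of the paper's use of $\psi'''<0$ (which forces $\psi''$ to decrease from $\sigma^2$), so these are morally the same lemma.

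The genuine divergence is in producing the constant $c$ in the lower bound. You compute the dominant coefficient $c^*(\varepsilon)=Q_z(z)/\bigl((z-r^*)Q_z'(r^*)\bigr)$ with $z=1+\varepsilon$ and argue $c^*(\varepsilon)\to 1/2$. That formula is correct — it falls out of the generating function $\sum_n P_n(z)t^n=\tfrac{1}{1-zt}\bigl(1-Q_z(z)/Q_z(1/t)\bigr)$, where $Q_z(z)$ encodes the monomial initial conditions — and the limit $1/2$ checks: $Q_z(z)\sim -p_0^{-1}\varepsilon$, $z-r^*\sim-\sigma^{-1}\sqrt{2\varepsilon}$, and $Q_z'(r^*)=p_0^{-1}(r^*)^{m-1}\psi'(r^*)\sim p_0^{-1}\sigma\sqrt{2\varepsilon}$. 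The paper (Lemma~\ref{lem:growth-actually-happens}) sidesteps this entirely: a one-line strong induction using the recurrence and $p_0<1$ gives $P_{n+1}(1+\varepsilon)\ge(1+\varepsilon)P_n(1+\varepsilon)$, hence $P_n(1+\varepsilon)\to\infty$, hence $c_1\ne 0$; a triangle-inequality/eventual-domination argument then delivers $c$. Your route is more informative (it pins down the constant), but you need to be careful to control the subdominant roots uniformly; the paper's route is more elementary and avoids any explicit perturbation analysis.

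For boundedness, your confluent-Vandermonde sketch has the right shape but misattributes why the offending term vanishes at cusps. At a cusp $z_0$ the expansion is $P_n(z_0)=(a+bn)r_0^n+\cdots$ with $|r_0|=1$; the claim $b=0$ is \emph{not} a consequence of $|r_0|\le 1$ (a linearly growing factor times a unit-modulus root is unbounded). It is a consequence of the monomial initial conditions together with the cusp arithmetic: since $z_0^j=1$ for every $j\ge 2$ with $p_j>0$, the recurrence propagates $P_n(z_0)=z_0^n$ for all $n$, so $|P_n(z_0)|=1$ identically — this is exactly the remark in the paper's Lemma~\ref{lem:showboundedness}. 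You should make that mechanism explicit, and also state how you pass from pointwise boundedness on $\gamma$ to a uniform constant (the $c_i(z)$ blow up as $z\to z_0$ even though their weighted combination stays bounded, so the compactness step needs a little care).
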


The proof of Theorem \ref{thm:boundedrapid} is given in Section \ref{sec:proofthmboundedrapid}.
The asymptotic growth rate of $P_n(1 + \varepsilon)$ is proportional to the inverse of the standard deviation $\sigma^{-1}$ of the random variable $Y$ determining the random walk: the larger the standard deviation, the slower $P_n$ grows. As a trade-off, the region where $P_n$ is bounded gets larger, see Figure \ref{fig:hypocycloids}. We numerically illustrate the growth rates established in Theorem \ref{thm:boundedrapid} for the special case of $m$-cusped hypocycloid recurrence relations
\eqref{eqn:genrec}, see Figure \ref{fig:hypocycloid-growth}.

\begin{figure}[h!]
    \centering
    \includegraphics[width=0.7\linewidth]{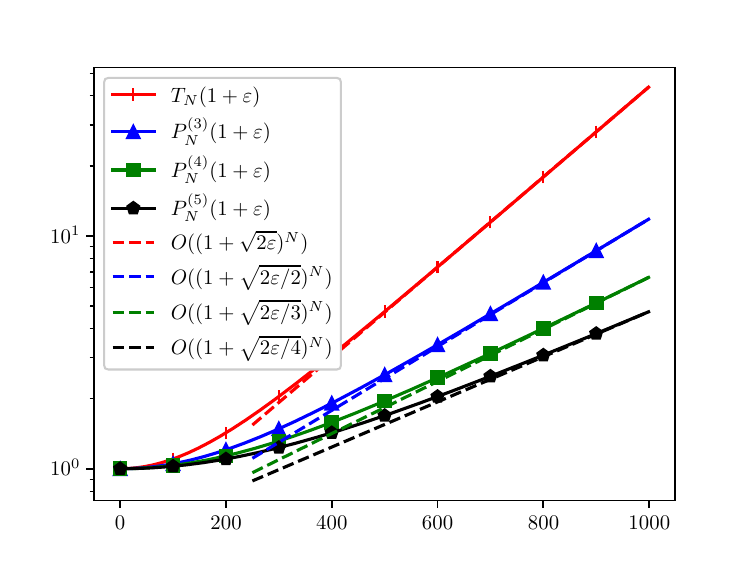}
    \caption{$P_n^{(m)}(1+\varepsilon)$ for $\varepsilon = 10^{-5}$ for $n\in\{1,\ldots,1000\}$ and $m \in \{2,3,4,5\}$, along with the rates predicted by Theorem \ref{thm:boundedrapid},   where $P_n^{(m)}$ is the hypocycloid polynomial defined in \eqref{eqn:genrec} and associated with the hypocycloid curves illustrated in Figure \ref{fig:hypocycloids}.}
    \label{fig:hypocycloid-growth}
\end{figure}

Our third main result says that $z^n$ is approximately a polynomial of degree $\sim \sqrt{n}$ in the region enclosed by \eqref{eq:curveformula}.

\begin{theorem}[Approximation of $z^n$ by a polynomial of degree $\sim \sqrt{n}$] \label{thm:approxzn}
Let $\Gamma$ be the closure of the region enclosed by the curve $\gamma$ parameterized by \eqref{eq:curveformula}, and $P_n(z)$, $n \ge 0$, be the polynomials defined by \eqref{recurrence} for a probability vector $p = (p_0,\ldots,p_m)$ that satisfies  \eqref{meanzero} with $p_1 = 0$. Then, 
$$
\left| \sum_{k = 0}^{\lfloor t \sqrt{n} \rfloor} \alpha_k P_k(z) - z^n \right| \le C e^{-c t^2},
\quad \text{for all} \quad z \in \Gamma,  \quad n\ge 0 \quad \text{and} \quad t > 0,
$$
where $C,c > 0$ are constants that only depend on $p$, the coefficients $\alpha_k = \mathbb{P}(|Y_n| = k)$ are probabilities, and $Y_n$ is defined in Section \ref{sec:proofthmapproxzn} below.
\end{theorem}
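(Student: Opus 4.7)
The plan is to generalise the probabilistic template of Sachdeva--Vishnoi \cite{sachdeva2014faster}, sketched in \eqref{eq:chebyshev-expectation}--\eqref{eq:approxxn}, with the Chebyshev polynomials replaced by $P_k$ and the symmetric $\pm 1$ step replaced by the mean-zero integer variable $Y$ from \eqref{meanzero}. The three inputs will be: (i) the recurrence-expectation identity \eqref{eq:expectationprop}, which turns $z^n$ into an expectation over a random walk; (ii) a Hoeffding/Bernstein concentration bound for the bounded, mean-zero, variance-$\sigma^{2}$ walk, giving $\mathbb{P}(|Y_n|>t\sqrt{n})\le 2e^{-c_p t^{2}}$ for some $c_p>0$ depending only on $p$; and (iii) the uniform estimate $|P_k(z)|\le C$ for $z\in\Gamma$ from Theorem~\ref{thm:boundedrapid}.

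First I would extend the family $\{P_k\}_{k\ge 0}$ to all $k\in\mathbb{Z}$ by running \eqref{recurrence} backward (possible since $p_m>0$ may be assumed, otherwise reduce $m$), so that the identity $\mathbb{E}\,P_{k+Y}(z)=zP_k(z)$ holds for every integer $k$. With $Y_1,Y_2,\dots$ i.i.d.\ copies of $Y$ and $Y_n:=Y_1+\cdots+Y_n$, iterating this identity $n$ times starting from $k=0$ yields
\[
z^n=\mathbb{E}\,P_{Y_n}(z)=\sum_{k\in\mathbb{Z}}\mathbb{P}(Y_n=k)\,P_k(z).
\]
I would then truncate to $|k|\le t\sqrt{n}$, fold negative indices $-k$ onto $+k$ to identify the leading part with $\sum_{k=0}^{\lfloor t\sqrt{n}\rfloor}\alpha_k P_k(z)$, $\alpha_k=\mathbb{P}(|Y_n|=k)$, and control the tail by
\[
\Bigl|\sum_{|k|>t\sqrt{n}}\mathbb{P}(Y_n=k)P_k(z)\Bigr|\le C\,\mathbb{P}(|Y_n|>t\sqrt{n})\le 2Ce^{-c_p t^{2}},
\]
using (ii) and (iii). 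After adjusting constants this gives the stated bound $Ce^{-ct^{2}}$.

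The hard part will be the folding step. In the Chebyshev case the identification of indices $-k$ with $+k$ is free because $T_{-k}=T_k$, but here the backward extension $P_{-k}$ produced by running \eqref{recurrence} in reverse is a genuinely different polynomial from $P_k$ and need not remain bounded on $\Gamma$, so simply invoking boundedness on the negative-index tail is not enough. I expect the definition of $Y_n$ announced in Section~\ref{sec:proofthmapproxzn} to be tailored precisely to resolve this --- most likely either (a) taking $Y_n$ to be a reflected or otherwise $\mathbb{Z}_{\ge 0}$-valued modification of $Y_1+\cdots+Y_n$ whose law makes $\mathbb{P}(|Y_n|=k)$ coincide with the non-negative-index Faber coefficients $c_k=[r^{k}]\psi(r)^{n}$ generated by the conformal map \eqref{eq:conformalmap}, or (b) a direct appeal to the finite Faber expansion $z^n=\sum_{k=0}^{n}c_k F_k(z)$, which is exact for polynomials and already involves non-negative indices only. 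Once the folding is justified, the concentration-plus-boundedness estimate above closes the argument.
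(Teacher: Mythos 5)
Your plan correctly flags the crux — the folding from $\mathbb{Z}$ to $\mathbb{Z}_{\ge 0}$ — but the proposed route has a genuine gap that would also sink the earlier display: the identity $z^n=\mathbb{E}\,P_{Y_1+\cdots+Y_n}(z)$ does not hold with i.i.d.\ steps. The recurrence--expectation relation \eqref{eq:expectationprop} is only valid for indices $k\ge m-1$; a mean-zero walk started at $0$ spends most of its time at indices $0,\dots,m-2$ and at negative indices, and at those indices the $p$-step does not satisfy $\mathbb{E}P_{k+Y}(z)=zP_k(z)$. Running \eqref{recurrence} backward to define $P_{-k}$ does not fix this either, as you observe: the backward polynomials are unbounded on $\Gamma$, and moreover there is then no reason for $\mathbb{P}(Y_n=-k)$ to transfer onto $P_k$ rather than onto the unbounded $P_{-k}$. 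Neither of your two guessed resolutions is what the paper does: it is not a $\mathbb{Z}_{\ge 0}$-valued reflection and it is not an exact Faber expansion.

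The paper's actual construction has two ingredients you are missing. First, it sets $P_{-k}:=P_k$ for $k<0$ by fiat — a symmetric relabeling, not a backward run of the recurrence — so that negative-index evaluations are automatically bounded on $\Gamma$. Second, $Y_n$ is \emph{not} an i.i.d.\ sum but a custom Markov martingale on $\mathbb{Z}$ whose step law depends on the current state: for $k\ge m-1$ the step is the $p$-distribution (this is where \eqref{eq:expectationprop} is used); for $0\le k\le m-2$ the step is a tailored two-point move to $k+1$ with probability $\tfrac{2k+1}{2k+2}$ or to $-(k+1)$ with probability $\tfrac{1}{2k+2}$, which is mean-zero and, combined with the initial conditions $P_j(z)=z^j$ and the convention $P_{-(k+1)}=P_{k+1}$, still gives $\mathbb{E}(P_{Y_n}(z)\mid Y_{n-1}=k)=zP_k(z)$; and for $k<0$ the transitions are defined by reflection. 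With this definition the tower property yields $z^n=\mathbb{E}P_{Y_n}(z)$ exactly, the folding $\sum_{k=-n}^{n}\mathbb{P}(Y_n=k)P_k(z)=\sum_{k\ge 0}\alpha_k P_k(z)$ is immediate from the symmetric transitions and $P_{-k}=P_k$, and concentration comes from Azuma's inequality for bounded-difference martingales (not a Hoeffding bound for i.i.d.\ sums), giving $\mathbb{P}(|Y_n|\ge t\sqrt n)\le 2e^{-ct^2}$ with $c$ depending on $m$ through the step bound $2m-1$. Your step (iii) — boundedness of $|P_k|$ on $\Gamma$ from Theorem~\ref{thm:boundedrapid} — and the final tail estimate are as in the paper.
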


The proof of Theorem \ref{thm:approxzn} is given in Section \ref{sec:proofthmapproxzn}.
Our final main result provides insight into the connection between the cusps in Theorem \ref{thm:charcurve}  and rapid growth in Theorem \ref{thm:boundedrapid}. The cusps present in  Figures \ref{fig:examplescurve} and \ref{fig:hypocycloids} restrict the applications of these polynomials (see Section \ref{sec:applications} for details). 
A natural question arises: is it still possible for a polynomial $P_n$ to achieve rapid growth (in the sense of Definition \ref{defn:rapidgrowth})
while being bounded on a region without a cusp?  The following arguments demonstrate that cusps are necessary for fast growth.  
Essentially, if $P_n(z)$ is a polynomial of degree $n$ that is bounded on an ellipse with vertices $\pm 1$ and co-vertices $\pm i \delta$ for $1 > \delta > 0$, then 
$$
|P_n(1+\varepsilon)| \le c_1 (1 + c_1' \varepsilon)^n,
$$
for some constants $c_1,c'_1 > 0$. In contrast, the polynomials 
presented in \eqref{recurrence} satisfy
$$
|P_n(1+\varepsilon)| \ge c_2 (1+ c_2' \sqrt{\varepsilon})^n,
$$
for some constant $c_2,c_2' > 0$, see Theorem \ref{thm:boundedrapid}.
As a consequence, 
the higher-order momentum methods we present in Section \ref{sec:applications}, based on this polynomial construction which corresponds to a higher-order recurrence, 
can lead to arbitrarily faster rates when $\varepsilon > 0$ is small.
In linear algebra applications, the case where $\varepsilon > 0$ is small corresponds to a small spectral gap, which is a case of practical interest.

\begin{theorem}[Bound on growth of polynomial bounded on an ellipse]
\label{ellipseslow}
Fix $1 > \delta > 0$ and let
$\Omega_\delta$ be the closure of the region enclosed by the ellipse in $\mathbb{C}$ with vertices $\pm 1$ and co-vertices $\pm i \delta$. Suppose that $P_n$ is a polynomial of degree at most $n$ such that $|P_n(z)| \le 1$ for $z \in \Omega_\delta$. Then,
$$
|P_n(1+\varepsilon)| \le \left( 1 + \frac{3}{2\delta} \varepsilon \right)^n,
$$
for all $1 > \varepsilon > 0$.
\end{theorem}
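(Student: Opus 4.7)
The plan is to apply the Bernstein--Walsh lemma, which says that if $P_n$ is a polynomial of degree at most $n$ with $|P_n(z)| \le 1$ on a compact set $K \subset \mathbb{C}$ with connected complement, then $|P_n(z)| \le |\Phi(z)|^n$ for all $z \notin K$, where $\Phi$ is the conformal map from $\overline{\mathbb{C}} \setminus K$ onto the exterior of the unit disk satisfying $\Phi(\infty)=\infty$. Applied with $K = \Omega_\delta$, this reduces the theorem to showing $\Phi(1+\varepsilon) \le 1 + 3\varepsilon/(2\delta)$.

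Next, I would write $\Phi$ down explicitly via a Joukowski-type map. The ellipse $\partial\Omega_\delta$ is the image of the unit circle under
$$
\Psi(w) = \frac{1+\delta}{2}\, w + \frac{1-\delta}{2}\cdot\frac{1}{w},
$$
because $\Psi(e^{i\theta}) = \cos\theta + i\delta\sin\theta$, and $\Psi$ extends to a conformal bijection $\{|w|>1\} \to \overline{\mathbb{C}}\setminus\Omega_\delta$ with $\Psi(\infty)=\infty$. Inverting $\Psi(w)=z$ as a quadratic in $w$ and taking the branch with $|w|>1$ gives
$$
\Phi(z) = \frac{z + \sqrt{z^2 - (1-\delta^2)}}{1+\delta},
$$
so in particular $\Phi(1+\varepsilon) = \bigl((1+\varepsilon) + \sqrt{\delta^2 + 2\varepsilon + \varepsilon^2}\,\bigr)/(1+\delta)$, which is real and larger than $1$, consistent with $1+\varepsilon$ lying outside $\Omega_\delta$.

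The final step is a short algebraic verification. Multiplying the target inequality $\Phi(1+\varepsilon) \le 1 + 3\varepsilon/(2\delta)$ through by $1+\delta$ and subtracting $1+\varepsilon$ reduces it to
$$
\sqrt{\delta^2 + 2\varepsilon + \varepsilon^2} \;\le\; \delta + \frac{3\varepsilon}{2\delta} + \frac{\varepsilon}{2};
$$
both sides are positive, so squaring and cancelling $\delta^2$ leaves $\varepsilon^2 - \varepsilon \le \delta\varepsilon + \bigl(\tfrac{3\varepsilon}{2\delta} + \tfrac{\varepsilon}{2}\bigr)^2$, which holds because the left side is negative for $0<\varepsilon<1$ while the right side is nonnegative. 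There is no serious obstacle here; the only bookkeeping to watch is the correct branch of the square root in $\Phi$ and the observation that the constant $3/(2\delta)$ is far from tight but sufficient to establish the linear-in-$\varepsilon$ scaling, which is what contrasts with the $\sqrt{\varepsilon}$ scaling available from cusped regions in Theorem~\ref{thm:boundedrapid}.
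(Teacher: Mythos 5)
Your proof is correct and takes the same conceptual route as the paper: bound $|P_n(1+\varepsilon)|$ by the $n$-th power of the exterior conformal map of the ellipse evaluated at $1+\varepsilon$, and then estimate that conformal-map value algebraically. The difference is one of packaging. The paper reaches the conformal-map bound indirectly, by quoting Saad's extremal estimates on a scaled ellipse $E_\rho$ (Lemma~\ref{ellipsemax} and the corollaries that follow it), working with an ellipse whose vertices are not at $\pm 1$, and only at the very end substituting $\delta = (\rho^2-1)/(\rho^2+1)$; you instead invoke the Bernstein--Walsh inequality as a black box and write down the conformal map $\Psi(w) = \tfrac{1+\delta}{2}w + \tfrac{1-\delta}{2}w^{-1}$ for the $\pm 1$-normalized ellipse in a single step, which shortens the bookkeeping. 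Your closing algebra (isolate the square root, square both sides, and observe that $\varepsilon^2 - \varepsilon < 0$ on $0<\varepsilon<1$) also differs slightly from the paper's use of $\sqrt{1+x} \le 1 + x/2$, but both are routine and produce the same constant $3/(2\delta)$. The one point worth being explicit about, which you correctly flag, is the branch choice in $\Phi$: taking the $+$ sign in the quadratic formula gives $\Phi(1)=1$ and $\Phi$ increasing on $(1,\infty)$, so $\Phi(1+\varepsilon)$ is real and exceeds $1$, as needed for the Bernstein--Walsh bound to be meaningful.
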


The proof of Theorem \ref{ellipseslow} is given in Section \ref{proofellipseslow}. An immediate consequence of Theorem \ref{ellipseslow} is that if $P_n$ is a polynomial bounded on a set $\Omega$ that contains an ellipse, which is tangent to the boundary at $1$, then rapid growth in the sense of Definition \ref{defn:rapidgrowth} is not possible.
Put differently, rapid growth past $1$ for a polynomial bounded on a domain that contains $1$ on the boundary is only possible if there is no ellipse that intersects $1$ and is contained in the domain, such as when there is a cusp, see Figure \ref{cuspcorner}. Note the possibility of scaling, translating, and rotating the ellipse in Theorem \ref{ellipseslow}. This result can be used to say that rapid growth is never possible at a point on the boundary when an ellipse enclosed in the domain can intersect the point (which happens whenever the boundary is smooth).

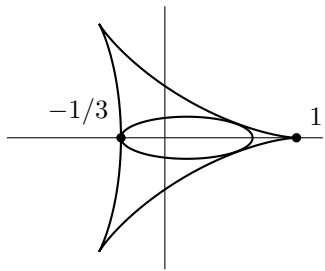
\begin{figure}[h!]
\begin{tikzpicture}[scale=1.75]
\draw (-1.2, 0) -- (1.2, 0);
\draw (0, -1) -- (0, 1);

\draw (1,0) node[circle,fill,inner sep=1.25pt,label=north east:{$1$}] {};
\draw (-1/3,0) node[circle,fill,inner sep=1.25pt,label=north west:{$-1/3$}] {};

\draw [thick, domain = 0:360, samples = 60]
    plot[smooth] ({2/3*cos(\x) + 1/3*cos(-2*\x)}, {2/3*sin(\x) + 1/3*sin(-2*\x)});
    
\draw [thick,domain = 0:360, samples = 60]
    plot[smooth] ({1/2*cos(\x)+1/6}, {.16*sin(\x)});
\end{tikzpicture}
\caption{As a consequence of Theorem \ref{ellipseslow}, if a polynomial is bounded on the deltoid region, then rapid growth in the sense of \eqref{eq:defrapidgrowth} is possible at $1$ but impossible at $-1/3$, since there is an ellipse contained in the deltoid region tangent to this point.}\label{cuspcorner}
\end{figure}

\section{Accelerating power iteration}
\label{sec:applications}
In this section, 
we apply the polynomial theory developed in the previous section to generalize the algorithm presented in \cite{austin2024dynamically}.
This allows the acceleration of the power method for broader classes of nonsymmetric problems. For a generalized momentum power iteration to be successful, the complex eigenvalues of a matrix $A$ need to be contained within the stability region for $P_k$.  The flexibility of defining the polynomial $P_k$ increases the applicability of the method. 

First we present the convergence and acceleration properties of Algorithm \ref{alg:generalized-momentum}, which uses fixed momentum parameters.  However, the optimal parameter choice depends on knowledge of $\lambda_2$. In Subsection \ref{subsec:dynamic} we present Algorithm \ref{alg:dynamic-generalized-momentum}, which dynamically approximates the optimal parameters without this knowledge.

\subsection{Notation and assumptions} \label{notationassumptions} Let $\vect{A} \in \mathbb{C}^{n \times n}$ be a diagonalizable matrix. 
Let $\lambda_1,\ldots,\lambda_n$ denote the eigenvalues of $\vect{A}$. Assume that the eigenvalues are ordered in order of descending magnitude and that 
$$
\lambda_1 > |\lambda_2| \ge \cdots \ge |\lambda_n|.
$$
Let $\vec{\phi}_1,\ldots,\vec{\phi}_n$ denote the corresponding eigenvectors, which we assume are normalized such that $\|\vec{\phi}_j\|_2 = 1$. Note that we assume that $\lambda_1$ is real and positive.

\subsection{The Generalized Momentum Power Method} The core of the generalized momentum power method is the recurrence
\begin{equation}\label{eq:general-momentum-recurrence}
\vec{w}_{k+1} = \vect{A}\vect{w}_k - \beta_1\vec{w}_{k-1} - \ldots - \beta_{m-1} \vec{w}_{k - m + 1},
\end{equation}
where $\beta_1, \dots, \beta_{m-1}$ are called the momentum parameters. 
To compute the initial conditions for the generalized momentum power method, we compute $m-1$ iterations of the power method (Algorithm \ref{alg:power-method}) applied to the rescaled matrix $p_0\vect{A}$ for some parameter $0 < p_0 \leq 1$.

\begin{remark}\label{rem:beta0}
A more general form of \eqref{eq:general-momentum-recurrence} is 
$\vec{w}_{k+1} = (\vect{A} - \beta_0 \vec{I})\vect{w}_k - \beta_1\vec{w}_{k-1} - \ldots - \beta_{m-1} \vec{w}_{k - m + 1},$
where the shift parameter $\beta_0$ corresponds to probability $p_1$, assumed zero herein as per Remark \ref{rem:p1zero}.
\end{remark}

To fix notation, we next state the power method, followed by the static version of the generalized momentum method.  We refer to this method as static, as the momentum parameters $\beta_1, \ldots \beta_{m-1}$ remain fixed throughout the algorithm.
\begin{algorithm}[h!]
\caption{Power method}
\label{alg:power-method}
\begin{algorithmic}[1]
\REQUIRE Matrix $\vect{A} \in \mathbb{R}^{n \times n}$, vector $\vect{v}_0 \in \mathbb{R}^n$,  $N \in \mathbb{N}$
\STATE Set $h_0 = \|\vect{v}_0\|_2$ and $\vect{x}_0 = h_{0}^{-1}\vect{v}_{0}$
\FOR{$k = 0,\dots,N-1$}
    \STATE Set $\vect{v}_{k+1} = \vect{A} \vect{x}_k$
    \STATE Set $h_{k+1} = \|\vect{v}_{k+1}\|_2$ and $\vect{x}_{k+1} = h_{k+1}^{-1}\vect{v}_{k+1}$
    \STATE Set $\nu_{k} = \langle \vect{v}_{k+1}, \vec{x}_k \rangle$ and $d_{k} = \| \vect{v}_{k+1} - \nu_{k}\vect{x}_k\|_2$ \hfill $\triangleright$ \emph{defined for later use}
\ENDFOR
\RETURN $\vec{x}_N$
\end{algorithmic}
\end{algorithm}

\begin{algorithm}
\caption{Generalized Momentum Power Method}
\label{alg:generalized-momentum}
\begin{algorithmic}[1]
\REQUIRE  Matrix $\vect{A} \in \mathbb{R}^{n \times n}$, vector $\vect{v}_0  \in \mathbb{R}^n$, iteration count $N \in \mathbb{N}$,  probability vector $p = (p_0,\ldots,p_m)$ satisfying \eqref{meanzero} with $p_1 = 0$, and $0 <\lambda_\ast < \lambda_1$. 
\STATE Do $m-1$ iterations of power iteration with inputs $p_0\vect{A}$ and $\vec{v}_0$
\STATE 
Compute the momentum parameters 
$\beta_j = p_{j+1}p_0^{j}\lambda_*^{j+1}$, for  $j = 1, \dots, m-1$
\FOR{{$k = m-1,\ldots,N-1$}}
\STATE{Set $\vect{v}_{k+1} = \vect{A} \vect{x}_{k}$}
\STATE{Set $\vect{u}_{k+1,0}' = \vect{v}_{k+1}$}
\STATE{Set $H_{k, 0} = 1$}
\FOR{ $j = 1, \dots, m-1$}
\STATE{Set $H_{k, j} = H_{k, j-1} h_{k+1-j}$}
\STATE{Set $\vect{u}'_{k+1, j} = \vect{u}'_{k+1,j-1} -  (\beta_j/H_{k, j}) \vect{x}_{k-j}$}
\ENDFOR
\STATE{Set $\vect{u}_{k+1} = \vect{u}'_{k+1, m-1}$}
\STATE{Set $h_{k+1} = \|\vect{u}_{k+1}\|_2$ and $\vect{x}_{k+1} = h_{k+1}^{-1} \vect{u}_{k+1}$}
\ENDFOR
\RETURN $\vec{x}_N$
\end{algorithmic}
\end{algorithm}

The connection between Algorithm \ref{alg:generalized-momentum} and the polynomial construction \eqref{recurrence} is made precise in the next lemma.
\begin{lemma}
\label{lem:general-momentum-works}
Let $p = (p_0,\ldots,p_m)$ be a probability distribution satisfying \eqref{meanzero} with $p_1 = 0$, and let $P_k(z)$ denote the polynomial family defined by  
\eqref{recurrence}.
Then running Algorithm \ref{alg:generalized-momentum} with parameters $\vect{A}$, $\vec{v}_0$, $N$, $p$, 
and $\lambda_\ast$, computes  
$\vec{x}_N = \vec{w}_N/\|\vec{w}_N\|_2$ where
$\vec{w}_N = P_N(\vect{A}/\lambda_*)\vec{v}_0$.
\end{lemma}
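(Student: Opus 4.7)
\emph{Proof plan.} The strategy is induction on $k$, jointly establishing
\begin{equation*}
\vec{x}_k \;=\; \vec{w}_k/\|\vec{w}_k\| \qquad\text{and}\qquad h_k \;=\; p_0 \lambda_\ast\, \|\vec{w}_k\|/\|\vec{w}_{k-1}\| \text{ for } k \geq 1,
\end{equation*}
where $\vec{w}_k := P_k(\vect{A}/\lambda_\ast)\vec{v}_0$ and $h_0 = \|\vec{w}_0\|$. The second identity is the crucial auxiliary bookkeeping: it says that the normalizers $h_k$ secretly encode ratios of norms of consecutive polynomial iterates, which is exactly what lets the divisions by $H_{k,j}$ in lines 8--9 of Algorithm \ref{alg:generalized-momentum} undo the effect of the prior normalizations. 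The lemma is then the $k = N$ case.

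For the base case $k \in \{0,\ldots,m-1\}$, since $P_k(z) = z^k$ on this range, $\vec{w}_k = (\vect{A}/\lambda_\ast)^k \vec{v}_0$, which is a positive multiple of $(p_0\vect{A})^k \vec{v}_0$; power iteration on $p_0\vect{A}$ (line 1) therefore outputs $\vec{x}_k$ in the required direction. A short direct computation of $\vec{v}_k = p_0\vect{A}\vec{x}_{k-1}$ then gives $h_k = p_0\lambda_\ast \|\vec{w}_k\|/\|\vec{w}_{k-1}\|$.

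For the inductive step, assume the claim for all indices up to $k$ with $k \geq m-1$. Rewrite the polynomial recurrence \eqref{recurrence} (with $p_1 = 0$), substitute $z = \vect{A}/\lambda_\ast$, apply to $\vec{v}_0$, multiply by $\lambda_\ast$, and re-index the sum to obtain
\begin{equation*}
p_0 \lambda_\ast\, \vec{w}_{k+1} \;=\; \vect{A} \vec{w}_k \;-\; \sum_{j=1}^{m-1} p_{j+1}\, \lambda_\ast\, \vec{w}_{k-j}.
\end{equation*}
Dividing through by $\|\vec{w}_k\|$ turns $\vect{A}\vec{w}_k/\|\vec{w}_k\|$ into $\vect{A}\vec{x}_k$ (matching line 4) and each $\vec{w}_{k-j}/\|\vec{w}_k\|$ into $(\|\vec{w}_{k-j}\|/\|\vec{w}_k\|)\vec{x}_{k-j}$; the target coefficient of $\vec{x}_{k-j}$ is thus $p_{j+1}\lambda_\ast \|\vec{w}_{k-j}\|/\|\vec{w}_k\|$, while the algorithm supplies $\beta_j/H_{k,j}$. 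The key calculation is matching these: telescoping the inductive $h_k$-identity yields
\begin{equation*}
H_{k,j} \;=\; \prod_{i=0}^{j-1} h_{k-i} \;=\; (p_0\lambda_\ast)^j\, \|\vec{w}_k\|/\|\vec{w}_{k-j}\|,
\end{equation*}
so with $\beta_j = p_{j+1} p_0^j \lambda_\ast^{j+1}$ we get $\beta_j/H_{k,j} = p_{j+1}\lambda_\ast \|\vec{w}_{k-j}\|/\|\vec{w}_k\|$ exactly. Hence $\vec{u}_{k+1} = p_0\lambda_\ast \vec{w}_{k+1}/\|\vec{w}_k\|$, a positive multiple of $\vec{w}_{k+1}$; taking norms gives $h_{k+1} = p_0\lambda_\ast \|\vec{w}_{k+1}\|/\|\vec{w}_k\|$ and normalizing gives $\vec{x}_{k+1} = \vec{w}_{k+1}/\|\vec{w}_{k+1}\|$, closing the induction.

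The main obstacle is spotting the auxiliary identity for $h_k$ and recognizing that the definition $\beta_j = p_{j+1} p_0^j \lambda_\ast^{j+1}$ was engineered precisely so that the telescoping collapse of the normalizers inside $H_{k,j}$ cancels against the accumulated scaling factors of the polynomial recurrence; once this is in hand, the remainder is routine bookkeeping.
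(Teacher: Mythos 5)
Your proposal is correct and follows essentially the same route as the paper's proof: the paper first rescales to the monic family $\tilde P_k(z) = (p_0\lambda_*)^k P_k(z/\lambda_*)$ to read off $\beta_j = p_{j+1}p_0^j\lambda_*^{j+1}$ and then untangles the normalization via $\vec{w}_n = (\prod_{k=0}^n h_k)\vec{x}_n$, whereas you fold the $(p_0\lambda_*)^k$ factor directly into the auxiliary identity $h_k = p_0\lambda_*\|\vec{w}_k\|/\|\vec{w}_{k-1}\|$ and run a clean joint induction --- the telescoping cancellation of $H_{k,j}$ against the momentum coefficients is the identical key step in both.
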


\begin{proof}[Proof of Lemma \ref{lem:general-momentum-works}]
The proof strategy for this lemma is based on reframing the recurrence defining $P_k(z)$ into a form that directly corresponds to the presentation given in Algorithm \ref{alg:generalized-momentum}.  
To translate the growth properties of the polynomials $P_k$ into a generalized momentum-based power method, we first dilate and rescale $P_k$ to a family of monic polynomials $\tilde P_k$, such that 
$\tilde P_k(z) = C^kP_k(z/\lambda_*),$ for some constant $C$.
To determine $C$, we substitute $z/\lambda_*$ into the recurrence \eqref{recurrence} and find
$$ P_{k+1}(z/\lambda_*) = \frac{z}{\lambda_*p_0} P_k(z/\lambda_*) - \sum_{j=2}^m \frac{p_j}{p_0} P_{k+1-j}(z/\lambda_*). $$
Replacing $P_k(z/\lambda_*)$ with $C^{-k}\tilde P_k(z)$ gives us the recurrence
$$ C^{-(k+1)}\tilde P_{k+1}(z) = \frac{z}{\lambda_*p_0} C^{-k} \tilde P_k(z)  - \sum_{j=2}^m \frac{p_j}{p_0} C^{-k-1+j} \tilde P_{k+1-j}(z),$$
and multiplying through by $C^{k+1}$ gives us
$$ \tilde P_{k+1}(z) = \frac{z}{\lambda_*p_0} C \tilde P_k(z)  - \sum_{j=2}^m \frac{p_j}{p_0} C^{j} \tilde P_{k+1-j}(z).$$
Setting $C = \lambda_* p_0$ ensures that $\tilde P_k$ is monic.  Moreover, we have determined a recurrence relation for $\tilde P_k$ for $k \geq m - 1$, given below:
\begin{equation} \label{eq:monic-recurrence}
    \tilde P_{k+1}(z) = z \tilde P_k(z) - \sum_{j=2}^m p_j p_0^{j-1} \lambda_*^{j} \tilde P_{k+1-j}(z)
   = z\tilde P_k(z) - \sum_{j=2}^m \beta_{j-1} \tilde P_{k + 1 - j}(z),
\end{equation}
where $\beta_{j} = p_{j+1}p_0^{j}\lambda_*^{j+1}$ are the momentum parameters.

For the $m-1$ initial iterations corresponding to computing $P_0(z) = 1$ through $P_{m-1}(z) = z^{m-1}$, we observe that $$\tilde{P}_k(z) = p_0^kz^k \text{ for } k \in \{0, 1, \dots, m-1\}.$$
This corresponds to applying $m-1$ iterations of the power iteration to the rescaled operator $p_0\vect{A}$ rather than $\vect{A}$.
 
To ensure that our algorithm does not result in unbounded growth,  
we define a consistent normalization as follows.
Define $\vec{x}_k = \vect{w}_k/\|\vect{w}_k\|_2$ for $k \geq 0$ and $h_k = \|\vect{w}_k\|_2/\|\vect{w}_{k-1}\|_2$ for $k \geq 1$.  Also define $h_0 = \|\vec{w}_0\|_2$.  It follows that 
$$\vect{w}_k = \left(\prod_{i = 0}^k h_i\right)\vec{x}_k.$$
We can use this to rewrite equation \eqref{eq:general-momentum-recurrence} in terms of $\vec{x}_k$:
$$\left(\prod_{i = 0}^{k+1} h_i\right)\vec{x}_{k+1} = \left(\prod_{i = 0}^{k} h_i\right)\vect{A}\vec{x}_k - \sum_{j=1}^{m-1} \left(\prod_{i = 0}^{k - j} h_i\right)\beta_j\vec{x}_{k-j}.$$
Dividing both sides by $\left(\prod_{i = 0}^{k} h_i \right)$, we arrive at the following recurrence:
$$h_{k+1}\vec{x}_{k+1} = \vect{A}\vec{x}_k -\sum_{j=1}^{m-1} \left(\prod_{i = k - j + 1}^{k} h_i\right)^{-1}\beta_j\vec{x}_{k-j} = \vec{A}\vec{x}_k - \sum_{j=1}^{m-1}H_{k, j}^{-1}\beta_j\vect{x}_{k-j},$$
where we denote
$H_{k, j} = \prod_{i = k - j + 1}^k h_i.$ Finally, we observe that each iteration of Algorithm \ref{alg:generalized-momentum} applies the recurrence \eqref{eq:monic-recurrence} with the parameters $\beta_j$ defined above and $\vect{A}$ in place of $z$, along with a normalization step.  We therefore conclude that the output of Algorithm \ref{alg:generalized-momentum} will satisfy $\vect{x}_N = \vect{w}_N/\|\vect{w}_N\|_2$, with $\vect{w}_N = P_N(\vec{A}/\lambda_*)\vect{v}_0$.
\end{proof}

The next theorem quantifies the convergence rate of Algorithm \ref{alg:generalized-momentum}.
For a given spectrum, the convergence rate of Algorithm \ref{alg:generalized-momentum} is governed by the growth of $P_N(\lambda_1/\lambda_*)$.
The growth rate established in Theorem \ref{thm:boundedrapid} is sharp up to lower order terms (see the proof of Lemma \ref{thm:root-estimate}). 
\begin{theorem}\label{thm:general-momentum-convergence} Let $p = (p_0, \dots, p_m) \in \mathbb{R}^{m+1}$ be a probability vector such that \eqref{meanzero} is satisfied with $p_1 = 0$.  Let $\Gamma$ be the closure of the region enclosed by the curve parameterized by \eqref{eq:curveformula}.  In addition to the notation and assumptions of \S \ref{notationassumptions}, 
assume that $\lambda_*$ is given such that $0 < \lambda_* < \lambda_1$, and 
\begin{align}\label{eqn:sd-domain}
\lambda_2, \dots, \lambda_n \in  \lambda_* \Gamma = \{ \lambda_* z \in \mathbb{C} : z \in \Gamma\}.
\end{align}
Let $\vec{v}_0 \in \mathbb{C}^n$ be an initial vector with eigenbasis expansion 
$\vec{v}_0 = \sum_{j=1}^n a_j \vec{\phi}_j$, with $a_1 \ne 0$. Run Algorithm \ref{alg:generalized-momentum} with inputs $\vec{A}$, $\vec{v}_0$, 
$p$, $N$, and $\lambda_*$. Then the output $\vec{x}_N$ satisfies
\begin{align}\label{eqn:thm21}
\vec{x}_N = 
\frac{a_1}{|a_1|}
\frac{\vec{\phi}_1 + R_N}{\| \vec{\phi}_1 + R_N\|_2},
~\text{ with } \|{R_N}\|_2 \le \widehat C 
\left(1 + \sigma^{-1} \sqrt{2 \varepsilon_\ast}\right)^{-N},
\end{align}
 where  $\varepsilon_\ast = |{\lambda_1}/{\lambda_*}| - 1$, and 
$\widehat C =c^{-1}C\sum_{j = 2}^n |a_j/a_1|$, with $C$ and $c$ the respective boundedness and rapid growth constants from Theorem \ref{thm:boundedrapid}, which depend only on $p$.
The ratios $|a_j/a_1|$ depend on the initial iterate $\vect{v}_0$.
\end{theorem}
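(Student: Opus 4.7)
The plan is to combine Lemma \ref{lem:general-momentum-works} with an eigenbasis expansion and then invoke the two bounds of Theorem \ref{thm:boundedrapid}. By Lemma \ref{lem:general-momentum-works}, the unnormalized iterate satisfies $\vec{w}_N = P_N(\vec{A}/\lambda_*)\vec{v}_0$ and $\vec{x}_N = \vec{w}_N/\|\vec{w}_N\|$. Since $\vec{A}$ is diagonalizable with $\vec{A}\vec{\phi}_j = \lambda_j\vec{\phi}_j$, spectral mapping gives
$$
\vec{w}_N = \sum_{j=1}^n a_j\, P_N(\lambda_j/\lambda_*)\, \vec{\phi}_j = a_1 P_N(\lambda_1/\lambda_*)\bigl(\vec{\phi}_1 + R_N\bigr),
$$
where $R_N = \sum_{j=2}^{n} (a_j/a_1)\bigl(P_N(\lambda_j/\lambda_*)/P_N(\lambda_1/\lambda_*)\bigr)\vec{\phi}_j$. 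This isolates the contribution of the dominant eigenvector and reduces everything to controlling a ratio of polynomial values.

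The next step is to estimate that ratio term-by-term using Theorem \ref{thm:boundedrapid}. The hypothesis \eqref{eqn:sd-domain} places each $\lambda_j/\lambda_*$ with $j \ge 2$ in $\Gamma$, so the boundedness part yields $|P_N(\lambda_j/\lambda_*)| \le C$ uniformly in $N$ and $j$. For the denominator, write $\lambda_1/\lambda_* = 1+\varepsilon_\ast$ with $\varepsilon_\ast := \lambda_1/\lambda_* - 1 > 0$; the rapid-growth part gives $|P_N(\lambda_1/\lambda_*)| \ge c(1+\sigma^{-1}\sqrt{2\varepsilon_\ast})^N$. Dividing, applying the triangle inequality to the sum defining $R_N$, and collecting the $|a_j/a_1|$ factors into $\widehat{C} = c^{-1}C\sum_{j=2}^n |a_j/a_1|$ produces the claimed decay for $\|R_N\|$.

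To finish, I must check that the scalar in front of $(\vec{\phi}_1 + R_N)/\|\vec{\phi}_1 + R_N\|$ collapses from $a_1 P_N(\lambda_1/\lambda_*)/|a_1 P_N(\lambda_1/\lambda_*)|$ down to $a_1/|a_1|$. Because $p \in \mathbb{R}^{m+1}$ and the initial data $P_k(z) = z^k$ are real polynomials, every $P_N$ has real coefficients, so $P_N(1+\varepsilon_\ast) \in \mathbb{R}$. A short induction on the recurrence \eqref{recurrence} using $\sum_{j=0}^m p_j = 1$ shows $P_N(1) = 1$; the rapid-growth bound rules out any real zero of $P_N$ on $(1,\infty)$; and continuity forces $P_N(1+\varepsilon_\ast) > 0$. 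Hence $\arg\bigl(a_1 P_N(\lambda_1/\lambda_*)\bigr) = \arg(a_1)$, which is exactly the phase appearing in \eqref{eqn:thm21}.

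The main subtlety is the leading-order identification of $\varepsilon_\ast$ stated as $(|\lambda_1/\lambda_*|-1) + \mathcal O((|\lambda_1/\lambda_*|-1)^2)$. This reflects that the sharp growth rate of $P_N(1+\varepsilon)$ is determined by the dominant root $r_\ast(z)$ of the characteristic polynomial $Q_z$ from \eqref{eq:charpolyintro} at $z = 1+\varepsilon$; the simple surrogate $1+\sigma^{-1}\sqrt{2\varepsilon}$ used in Theorem \ref{thm:boundedrapid} agrees with $|r_\ast(1+\varepsilon)|$ to leading order in $\sqrt{\varepsilon}$, and the residual $\mathcal O(\varepsilon)$ gap is precisely the $\mathcal O((|\lambda_1/\lambda_*|-1)^2)$ correction absorbed into the definition of $\varepsilon_\ast$ via Lemma \ref{thm:root-estimate}. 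Apart from tracking this expansion, I do not anticipate any substantive obstacle: the algorithmic content is handled by Lemma \ref{lem:general-momentum-works}, and the spectral content is handled by Theorem \ref{thm:boundedrapid}.
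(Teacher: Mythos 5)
Your proposal matches the paper's own proof: both use Lemma \ref{lem:general-momentum-works} to write $\vec{w}_N = P_N(\vec{A}/\lambda_*)\vec{v}_0$, expand in the eigenbasis, factor out $a_1 P_N(\lambda_1/\lambda_*)$, and bound $\|R_N\|$ by combining the boundedness bound on $\Gamma$ with the rapid-growth bound from Theorem \ref{thm:boundedrapid}. Your extra paragraph justifying why the scalar phase collapses to $a_1/|a_1|$ (real coefficients, $P_N(1)=1$, and $P_N(1+\varepsilon_*)>0$) is a genuine detail the paper glosses over; note that the positivity $P_N(1+\varepsilon)>0$ is in fact established directly inside the proof of Lemma \ref{lem:growth-actually-happens}, so you could simply cite that rather than re-deriving it via the intermediate value theorem.
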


The proof of Theorem \ref{thm:general-momentum-convergence}  is given in Section \ref{sec:proofthm:general-momentum-convergence}.
Additionally, Theorem \ref{thm:general-momentum-convergence} immediately leads to a corollary with practical importance.

\begin{corollary}\label{cor:l2opt}
A necessary condition for \eqref{eqn:sd-domain} to hold is $\lambda_\ast \ge |\lambda_2|$. If \eqref{eqn:sd-domain} holds when $\lambda_\ast = |\lambda_2|$, then this choice of $\lambda_*$ minimizes the  convergence bound in \eqref{eqn:thm21} over all possible values of $\lambda_*$.
\end{corollary}
Corollary \ref{cor:l2opt},
which follows from the observation that $\varepsilon_\ast$ is maximized when $\lambda_\ast = |\lambda_2|$,
implies that achieving fast convergence requires either a priori knowledge of $\lambda_2$ (which is generally unrealistic) or an efficient strategy to estimate $\lambda_2$. We present such an approximation strategy in Subsection \ref{subsec:dynamic}, together with a dynamic implementation of Algorithm \ref{alg:generalized-momentum}, given as Algorithm \ref{alg:dynamic-generalized-momentum}. In light of \eqref{eqn:sd-domain}, we will restrict our attention to problems where $\lambda_1 > \lambda_2 > 0$ are real. The numerical results in Subsection \ref{sec:numerics} demonstrate the efficiency of this approach.

\begin{remark}[Example: Hypocycloids]

Set $p_0 = (m-1)/m$ and $p_m = 1/m$, with $p_k = 0$ for $1 \leq k \leq m-1$.
Then \eqref{eq:variance} gives us
$$\sigma^2 = \frac{m-1}{m} \cdot 1^2 + \frac{1}{m} \cdot (m-1)^2 = m - 1,$$
which 
from Theorem \ref{thm:root-estimate}
corresponds to a dominant zero of the characteristic polynomial
$$r = 1 + (m - 1)^{-1/2}\sqrt{2\varepsilon} = 1 + \sqrt{\frac{2}{m - 1}\varepsilon}.$$
This matches the existing rates that we've established.  For the Chebyshev polynomials, $m = 2$, which corresponds to the established growth rate of $(1 + \sqrt{2\varepsilon})^n$, and for the Faber polynomials on the deltoid, $m = 3$ gives a rate of $(1 + \sqrt{\varepsilon})^n$.
The momentum parameters given by Lemma \ref{lem:general-momentum-works} corresponding to three-term recurrences defined by $p_0 = (m-1)/m$ and $p_m = 1/m$ are shown in Table \ref{tab:three-term-parameters}.
\begin{table}[h!]
    \centering
    \begin{tabular}{c|l|c|l}
        Order & Iteration & Optimal $\beta$ & Error\\
        \hline
        $2$ & $\vec{x}_{k+1} = \vect{A}\vec{x}_{k} - \beta \vec{x}_{k - 1}$ & $(1/4)\lambda_2^2$ & $\mathcal{O}\big((1 + \sqrt{2\varepsilon})^{-N}\big)$\\[5pt]
        $3$ & $\vec{x}_{k+1} = \vect{A}\vec{x}_{k} - \beta \vec{x}_{k - 2}$ & $(4/27)\lambda_2^3$ & $\mathcal{O}\big((1 + \sqrt{\varepsilon})^{-N}\big)$\\[5pt]
        $4$ & $\vec{x}_{k+1} = \vect{A}\vec{x}_{k} - \beta \vec{x}_{k - 3}$ & $(27/256)\lambda_2^4$ & $\mathcal{O}\big((1 + \sqrt{2\varepsilon/3})^{-N}\big)$\\[5pt]
        $m$ & $\vec{x}_{k+1} = \vect{A}\vec{x}_{k} - \beta \vec{x}_{k + 1 - m}$ & $\dfrac{(m - 1)^{m - 1}}{m^{m}} \lambda_2^m$ & $\mathcal{O}\left(\left(1 + \sqrt{\dfrac{2\varepsilon}{m-1}}\right)^{-N}\right)$
    \end{tabular}
    
    \vspace{2pt}
    \caption{The optimal momentum coefficients for accelerating the power method via three-term recurrences, as well as the expected convergence rates.}
    \label{tab:three-term-parameters}
\end{table}

\end{remark}

\subsection{The dynamic generalized momentum power method}\label{subsec:dynamic}
As shown in Theorem \ref{thm:general-momentum-convergence} and Corollary \ref{cor:l2opt}, given a probability vector $p = (p_0,\ldots,p_m)$ satisfying \eqref{meanzero} with $p_1 = 0$, the momentum parameters for Algorithm \ref{alg:generalized-momentum} given by $\beta_j = p_{j+1}p_0^{j}\lambda_*^{j+1}$ optimize the performance of the algorithm for $\lambda_\ast = \lambda_2$ so long as \eqref{eqn:sd-domain} holds. Our goal in this section is to develop a strategy that efficiently approximates $\lambda_2$, hence defines an effectively optimal set of momentum parameters without prior knowledge of $\lambda_2$. This strategy is based on the approaches in \cite{austin2024dynamically,cowal2025faber}. 

\begin{remark}
We proceed under the following two assumptions. First, we assume $\lambda_1 > \lambda_2 > 0$ are real, which is a restriction on our problem class.
Second, we assume the residual convergence rate, denoted $d_{k}/d_{k-1}$ where 
$d_k = \|{v_{k+1} - \nu_k x_k}\|_2$, see Algorithms \ref{alg:power-method} and \ref{alg:dynamic-generalized-momentum}, agrees with the analytical convergence rate of the approximate eigenvector to the first eigenmode, as in Theorem \ref{thm:general-momentum-convergence}.

In \cite{austin2024dynamically}, the convergence of the analogous lower-order (Chebyshev-type) dynamic momentum method is rigorously established for symmetric matrices. Here, as in \cite{cowal2025faber}, we do not conduct a formal convergence analysis of the dynamic generalized momentum power method, but rather provide a heuristic justification based on the convergence analysis for Algorithm \ref{alg:generalized-momentum} in the preceding section, and a proof of the key contraction mechanism behind the convergence in Lemma \ref{lem:dynamic-contraction-map}. We further demonstrate the effectiveness of the method numerically in Section \ref{sec:numerics}.
\end{remark}
Theorem \ref{thm:general-momentum-convergence} shows the convergence rate of Algorithm \ref{alg:generalized-momentum} is given by 
\begin{align}\label{eqn:gen-crate}
(1 + \sigma^{-1}\sqrt{2 \varepsilon_\ast})^{-1} \approx \exp\left(-\sigma^{-1}\sqrt{2 \varepsilon_\ast} \right),
\end{align}
where the approximation holds up to higher-order terms in $\varepsilon_*$.
From \eqref{eqn:gen-crate} and Corollary \ref{cor:l2opt}, we can define an asymptotically optimal convergence rate by
\begin{align}\label{eqn:rhodef}
\rho = \exp(-\sigma^{-1} \sqrt{2 \varepsilon}) 
= \exp(-\sigma^{-1} \sqrt{2 (r^{-1} -1)}), ~\text{with}~
\varepsilon = \lambda_1/\lambda_2-1, 
\end{align}
and $r = \lambda_2/\lambda_1$. Inverting the expression $\rho(r)$ in \eqref{eqn:rhodef} for $r$ as a function of $\rho$ yields 
\begin{align}\label{eqn:rofrho}
r(\rho) = \frac{1}{\frac{\sigma^2}{2}(\log\rho)^2 + 1}.
\end{align}

At each iteration of  Algorithm \ref{alg:dynamic-generalized-momentum}  the residual convergence rate $\rho_k = d_k/d_{k-1}$ is computed. The quantity $\rho_k$ can be thought of as an approximation to the optimal convergence rate $\rho$ given in \eqref{eqn:rhodef}, meaning $\rho_{k} = \rho(\hat r)$, where $\hat r$ is an approximation to $r$. By \eqref{eqn:rofrho} we then solve for $r_{k+1} = \hat r (\rho_k)$, which gives us an approximation to $r = \lambda_2/\lambda_1$. Multiplying by the Rayleigh quotient gives an approximation to $\lambda_2$.  

This procedure dynamically determines the parameters $\beta_1, \dots, \beta_{m-1}$, of Algorithm \ref{alg:generalized-momentum}, yielding Algorithm \ref{alg:dynamic-generalized-momentum}.  In Lemma \ref{lem:dynamic-contraction-map}, we show the map $r(\rho)$ given by \eqref{eqn:rofrho} is a contraction, without which the method could not be expected to work.

\begin{remark}
Other definitions of the asymptotic convergence rate are possible! We also tested $\rho = (1 + \sigma^{-1}\sqrt{2 \varepsilon})^{-1}$, which is equivalent up to higher-order terms in $\varepsilon$, see \eqref{eqn:gen-crate}.  We prefer the choice \eqref{eqn:rhodef} as it gave a better contraction result in Lemma \ref{lem:dynamic-contraction-map}, as well as slightly better convergence in practice.
\end{remark}
\begin{algorithm}
\caption{Dynamic Generalized Momentum Power Method}
\label{alg:dynamic-generalized-momentum}
\begin{algorithmic}[1]
\REQUIRE Matrix $\vect{A} \in \mathbb{R}^{n \times n}$, vector $\vect{v}_0  \in \mathbb{R}^n$, iteration count $N \in \mathbb{N}$, 
and probability vector $p = (p_0,\ldots,p_m)$ satisfying \eqref{meanzero} with $p_1 = 0$. 
\STATE Set $\sigma^2 = \sum_{j=0}^m (j-1)^2p_j$
\STATE Do $m-1$ iterations of power iteration with inputs $p_0\vect{A}$ and $\vec{v}_0$
\FOR{{$k = m-1,\ldots,N-1$}}
\STATE{Set $\vect{v}_{k+1} = \vect{A} \vect{x}_{k}$, $\nu_{k} = \langle \vect{v}_{k+1}, \vect{x}_{k}\rangle$ and $d_{k} = \|\vect{v}_{k+1} - \nu_{k} \vect{x}_{k}\|_2$}
\STATE{Set $\rho_{k-1} = \min\{d_{k}/d_{k-1},1\}$  and $r_k = \left(\frac{\sigma^2}{2}(\log \rho_{k-1})^2 + 1 \right)^{-1}$}

\STATE{Set $\vect{u}_{k+1,0}' = \vect{v}_{k+1}$}
\STATE{Set $H_{k, 0} = 1$}
\FOR{$j = 1, \dots, m-1$}
\STATE{Set $\beta_{k,j} = p_{j+1}p_0^{j}(\nu_{k} r_{k})^{j+1}$}
\STATE{Set $H_{k, j} = H_{k, j-1} h_{k+1-j}$}
\STATE{Set $\vect{u}'_{k+1, j} = \vect{u}'_{k+1,j-1} -  (\beta_{k,j}/H_{k, j}) \vect{x}_{k-j}$}

\ENDFOR
\STATE{Set $\vect{u}_{k+1} = \vect{u}'_{k+1, m-1}$}
\STATE{Set $h_{k+1} = \|\vect{u}_{k+1}\|_2$ and $\vect{x}_{k+1} = h_{k+1}^{-1} \vect{u}_{k+1}$}
\ENDFOR
\RETURN $\vec{x}_N$
\end{algorithmic}
\end{algorithm}

The next lemma establishes the key contraction result. 

\begin{lemma}\label{lem:dynamic-contraction-map}
    Assume that $\rho \in [\rho_1, 1]$ where $({-\sigma^2 + \sigma\sqrt{\sigma^2 + 4}})/{2} < \rho_1 < 1.$  Then the function $r(\rho)$ given by \eqref{eqn:rofrho} is Lipschitz with constant $c<1$ on $[\rho_1,1]$.
\end{lemma}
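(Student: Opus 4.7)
The plan is to reduce the claim to a uniform bound $|r'(\rho)| \le L < 1$ on $[\rho_1,1]$, from which a contraction follows by the mean value theorem. The domain $[\rho_1,1]$ is compact, so it suffices to combine a closed-form computation of $r'$ with two elementary inequalities and then verify that the resulting threshold matches the quadratic $\rho^2 + \sigma^2\rho - \sigma^2 = 0$.

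First I would write $r(\rho) = 1/f(\rho)$ with $f(\rho) = \tfrac{\sigma^2}{2}(\log\rho)^2 + 1$, so that
$$
r'(\rho) = -\frac{f'(\rho)}{f(\rho)^2} = \frac{-\sigma^2 \log \rho}{\rho\, f(\rho)^2}.
$$
On $(0,1)$ we have $\log\rho < 0$ and $f(\rho) \ge 1$, so $r'(\rho) > 0$ and
$$
|r'(\rho)| \;\le\; \frac{-\sigma^2\log\rho}{\rho}.
$$

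Next I would apply the elementary bound $-\log\rho < (1-\rho)/\rho$ for $\rho \in (0,1)$, which follows by observing that $g(\rho) := \log\rho - 1 + 1/\rho$ satisfies $g(1)=0$ and $g'(\rho) = (\rho-1)/\rho^2 < 0$ on $(0,1)$. Substituting yields
$$
|r'(\rho)| \;\le\; \frac{\sigma^2(1-\rho)}{\rho^2}.
$$
Since the right-hand side has derivative $\sigma^2(\rho-2)/\rho^3 < 0$ on $(0,1)$, it is strictly decreasing, so its maximum over $[\rho_1,1]$ occurs at $\rho_1$. Setting $L := \sigma^2(1-\rho_1)/\rho_1^2$, the condition $L < 1$ is equivalent to $\rho_1^2 + \sigma^2 \rho_1 - \sigma^2 > 0$, whose unique positive root is exactly $(-\sigma^2 + \sigma\sqrt{\sigma^2+4})/2$. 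The hypothesis $\rho_1 > (-\sigma^2 + \sigma\sqrt{\sigma^2+4})/2$ therefore gives $L < 1$, and the mean value theorem completes the contraction argument on $[\rho_1,1]$.

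The main obstacle is selecting the right elementary bound on $-\log\rho$: weaker options such as $-\log\rho \le 1-\rho$ lead to a different, and looser, threshold that does not match the stated $\rho_*$. The inequality $-\log\rho < (1-\rho)/\rho$ is precisely the one that converts the transcendental condition $|r'(\rho)|<1$ into a quadratic inequality in $\rho$ whose positive root coincides with the formula given in the statement. The dropped factor $f(\rho)^2 \ge 1$ is harmless because the binding case is near $\rho_1$, where $f$ is largest, so keeping it would only strengthen the conclusion.
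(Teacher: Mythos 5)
Your proof is correct and follows essentially the same route as the paper: compute $r'(\rho)$, discard the factor $\left(\tfrac{\sigma^2}{2}(\log\rho)^2+1\right)^{-2}\le 1$, apply the elementary inequality $\rho\log\rho\ge\rho-1$ to obtain the bound $\sigma^2(1-\rho)/\rho^2$, and convert the condition that this be less than $1$ into the quadratic $\rho^2+\sigma^2\rho-\sigma^2>0$. The only small difference is that you explicitly verify that $\sigma^2(1-\rho)/\rho^2$ is decreasing on $(0,1)$ before evaluating it at $\rho_1$, a step the paper's proof leaves implicit when setting $c=\sigma^2(1-\rho_1)\rho_1^{-2}$; this is a minor but welcome tightening of the exposition.
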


\begin{proof}
We will show that
$|r(\rho) - r(\rho')| \leq c|\rho - \rho'|,$
where $c < 1$, for some domain of $\rho$ bounded above by $1$.  By the Mean Value Theorem, it is sufficient to establish that $0 \leq \frac{dr}{d\rho} \leq c$ for all $\rho$ on the interval.  Computing the derivative, we find that
\begin{align}\label{eqn:drdrho}
\frac{dr}{d\rho} = \left(\frac{\sigma^2}{2}(\log \rho)^2 + 1\right)^{-2} \cdot \sigma^2(-\log \rho) \cdot \rho^{-1}.
\end{align}
Since every factor is non-negative, we establish that $\frac{dr}{d\rho} \geq 0$.  
Additionally, we observe that $((\sigma^2/2)(\log \rho)^2 + 1)^{-2} \leq 1$.  
Hence,
$0 \leq {dr}/{d\rho} \leq \sigma^2(-\log \rho) \cdot \rho^{-1},$
for all $\rho \leq 1$.  Using the inequality $\rho\log(\rho) \geq \rho - 1$ for all $\rho \in (0,1]$, we have $-\log(\rho)\rho^{-1} \leq (1 - \rho)\rho^{-2}$.  This establishes the upper bound
\begin{align}\label{eqn:dr-upper}
0 \leq \frac{dr}{d\rho} \leq \sigma^2\frac{1 - \rho}{\rho^2}.
\end{align}
The right hand side of \eqref{eqn:dr-upper} is bounded above by 1 whenever
$\rho^2 + \sigma^{2}\rho - \sigma^{2} \geq 0,$
which is guaranteed when $$ \rho \geq \rho_1 > \frac{-\sigma^2 + \sigma\sqrt{\sigma^2 + 4}}{2}.$$
Let $c = \sigma^2(1 - \rho_1)\rho_1^{-2}.$
This allows us to conclude that 
$0 \leq {dr}/{d\rho} \leq c < 1,$
for all $\rho \in [\rho_1, 1]$.
\end{proof}
This key contraction property shows that given an approximation $\rho_k \approx \rho$, the map \eqref{eqn:rofrho} 
produces an approximation $r_{k+1} \approx r$. In particular, for problems with small spectral gaps, meaning $r$ is close to 1, we have from \eqref{eqn:drdrho} that $dr/d\rho$ is close to zero, hence the method is most stable in the regime where it is most useful.

\subsection{Numerical Results} \label{sec:numerics}
The assumptions of Theorem \ref{thm:general-momentum-convergence} imply that Algorithms \ref{alg:generalized-momentum} and \ref{alg:dynamic-generalized-momentum} can be applied to find the dominant eigenvectors of non-symmetric operators whose eigenvalues of largest magnitude are real.  Moreover, the benefits of the momentum-based approach show themselves primarily when the spectral gap is small. 
 In this section, we demonstrate the method on
 example problems that satisfy these criteria.  
 Code to reproduce the numerical experiments presented in this section is available at: 
\begin{center}
\url{https://github.com/petercowal/random-walk-power-methods}.  
\end{center}

\begin{remark}
To account for the possible difference in complex phase between the output of Algorithm \ref{alg:generalized-momentum} and our ground truth vectors, 
we use the following measure of relative error between unit vectors $\hat{\vect{x}}$ and $\hat{\vect{x}}_{\text{true}}$:
$$
    \text{relerr}(\hat{\vect{x}}, \hat{\vect{x}}_{\text{true}}) = \| (\hat{\vect{x}}^*\hat{\vect{x}}_{\text{true}})\hat{\vect{x}} - \hat{\vect{x}}_{\text{true}}\|_2 
    = \sqrt{1 - |\hat{\vect{x}}^*\hat{\vect{x}}_\text{true}|^2} = \sin(\theta),
$$
where $\theta$ is the angle between $\text{span}(\hat{\vect{x}})$ and $\text{span}(\hat{\vect{x}}_{\text{true}})$.

For our numerical experiments, unless stated otherwise, we use SciPy's {\tt eigs} function with default precision to determine a reference dominant eigenvector $\vect{x}_{\text{true}}$, as well as the two dominant eigenvalues $\lambda_1$ and $\lambda_2$, which we use to compute the spectral gap.
\end{remark}

\subsubsection{Toy example}

Consider the matrix 
$$
\vect{A} = \begin{bmatrix}
    1.01 & & & \\ & 1 & & \\ & & & -1/2 \\ & & 1/2 &
\end{bmatrix}
$$ 
with eigenvalues $\lambda_1 = 1.01$, $\lambda_2 =1$, $\lambda_{3} = i/2$ and $\lambda_4 = -i/2$ and a spectral gap of $\varepsilon = 0.01$.  Note that $\lambda_3$ and $\lambda_4$ lie within the 4-hypocycloid and 5-hypocycloid, but not the 3-hypocycloid (see Figure \ref{fig:hypocycloids}).  As such, Theorem \ref{thm:general-momentum-convergence} only predicts that momentum methods based on the 4- and 5-hypocycloids will converge at an accelerated rate.  We run Algorithm \ref{alg:generalized-momentum} with momentum parameters determined by Table \ref{tab:three-term-parameters} and plot the results in Figure \ref{fig:general-momentum}.  
The results indicate that the 4- and 5-hypocycloid methods converge at the predicted rate (and significantly faster than the power method), whereas the deltoid method converges relatively slowly and the Chebyshev method fails to converge at all.

We use the same toy example to verify the convergence of Algorithm \ref{alg:dynamic-generalized-momentum}, with results shown on the right of Figure \ref{fig:general-momentum}.  The corresponding probability distributions are given in Table \ref{tab:dynamic-parameters}.  Notably, the mixed probability distribution performs the best.

\begin{table}[h!]
    \centering
    \begin{tabular}{c|llllll|c}
        Name & $p_0$ & $p_2$ & $p_3$ & $p_4$ & $p_5$ & $p_6$ & $2\sigma^{-2}$\\ \hline
        dynamic 2   & $1/2$ & $1/2$ &       &       &       & & $2$ \\
        dynamic 3   & $2/3$ &       & $1/3$ &       &       & & $1$ \\
        dynamic 4   & $3/4$ &       &       & $1/4$ &       & & $2/3$\\
        dynamic 5   & $4/5$ &       &       &       & $1/5$ & & $1/2$ \\
        dynamic 6   & $5/6$ &       &       &       &  & $1/6$ & $2/5$  \\
        dynamic 2-3 & $7/12$& $1/4$ & $1/6$ &       &       & & $4/3$ \\
        dynamic 2-4 & $5/8$ & $1/4$ &       & $1/8$ &       & & $1$\\
    \end{tabular}
    \caption{The dynamic parameters for the momentum methods plotted in Figure \ref{fig:general-momentum} (right) as well as Figure \ref{fig:real-examples}}
    \label{tab:dynamic-parameters}
\end{table}

\begin{figure}[h!]
    \centering
    \includegraphics[width=0.49\linewidth]{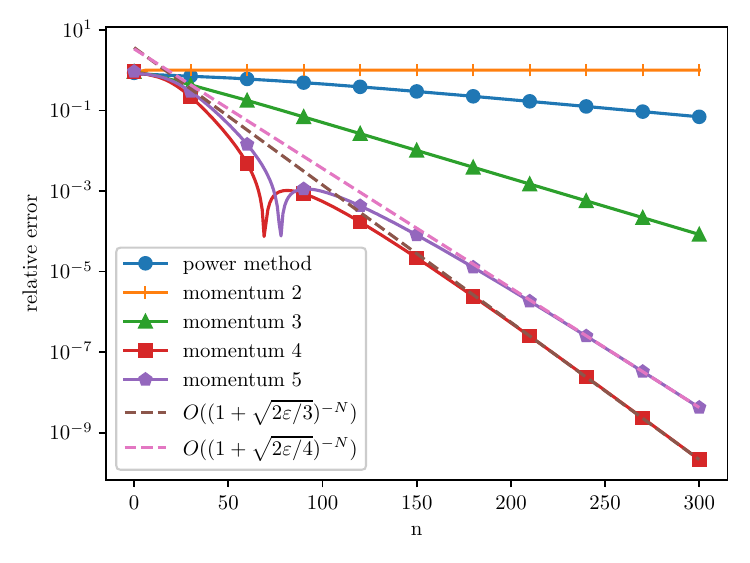}
    \includegraphics[width=0.49\linewidth]{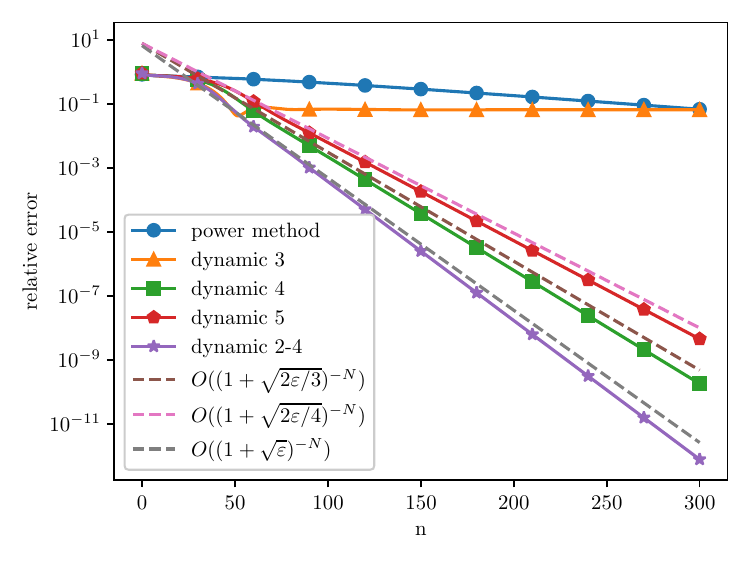}
    \caption{Left: convergence of hypocycloid momentum methods for varying degrees of hypocycloid for the toy example.  Right: convergence of the dynamic momentum method with various probability distributions.}
    \label{fig:general-momentum}
\end{figure}

\subsubsection{Barbell example}

We construct a random directed barbell graph with $2N$ vertices, separated into two halves of $N$ vertices with a single bidirectional edge bridging both halves.  Within each half of the graph, we randomly connect vertices with probability $p$. Specifically,
$$\vec{A} = \begin{bmatrix}
a_{11} & \cdots & a_{1N} \\
\vdots & & \vdots \\
a_{N1} & \cdots & a_{NN} & 1
\\ & & 1 & b_{11} & \cdots & b_{1N} \\
& & & \vdots & & \vdots \\
& & & b_{N1} & & b_{NN}
\end{bmatrix},$$ 
where $P(a_{ij} = 1) = P(b_{ij} = 1) = p$ and $P(a_{ij} = 0) = P(b_{ij} = 0) = 1- p$.
The adjacency matrix of this graph will have a dominant real eigenvalue and a small spectral gap, due to the barbell geometry. Moreover, by the circular law, we expect it to have complex eigenvalues that are roughly uniformly distributed within a disc in the complex plane with radius $1/\sqrt{Np}$ centered at zero.

For our specific example, we consider $N = 1000$ and $p = 1/250$, for an expected spectral radius of $1/2$.  Since the graph is random, some eigenvalues fall slightly outside this radius.
The 4-hypocycloid is the smallest hypocycloid that encloses a disk centered at zero of radius $1/2$, which it does exactly; higher degree hypocycloids enclose a disk of radius greater than $1/2$.  We plot the eigenvalues of the adjacency matrix alongside hypocycloids as well as the results of applying the dynamic momentum method to the adjacency matrix in Figure \ref{fig:barbell-results}.  Note that the dynamic methods converge exactly when the spectrum is enclosed by the corresponding hypocycloid, at a rate much faster than the unaccelerated power method.

\begin{figure}[h!]
    \centering
    \includegraphics[width=0.44\linewidth]{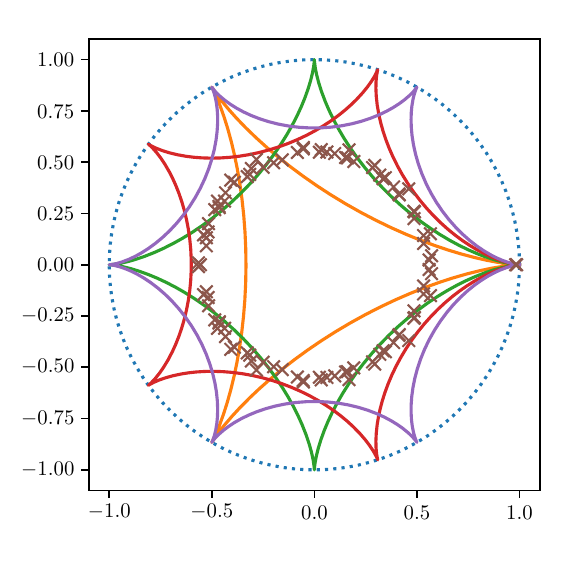}
    \includegraphics[width=0.55\linewidth]{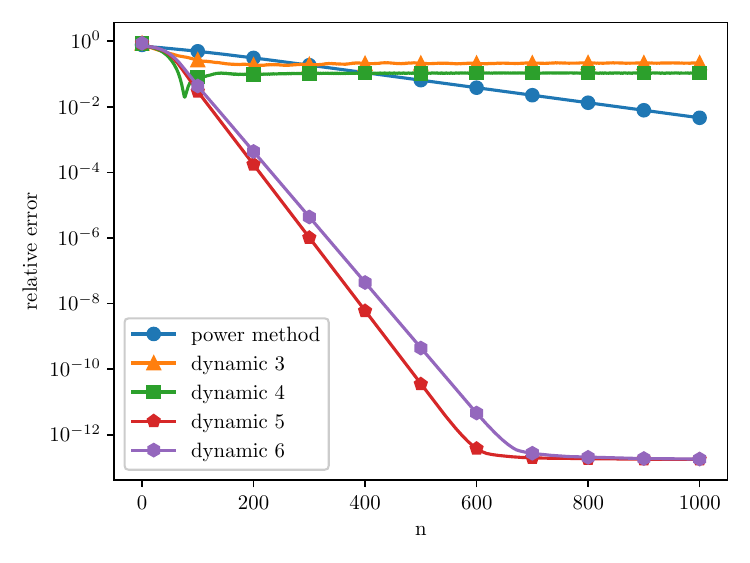}
    \caption{Left: the 100 largest eigenvalues (marked with a cross) of the barbell adjacency matrix ($N = 1000$, $p = 1/250$), alongside hypocycloids of degrees 3, 4, 5, 6.  Right: convergence of the dynamic momentum method applied to the barbell problem.}
    \label{fig:barbell-results}
\end{figure}

We generate a larger barbell adjacency matrix $(N = 16000, p = 1/4000)$ with a similar spectral profile, with results shown in Figure \ref{fig:big-barbell-results}.  Given this matrix's large size and small spectral gap, we found that {\tt eigs} failed to produce a solution in a reasonable time, so we compute a reference eigenvector using Algorithm \ref{alg:dynamic-generalized-momentum} with parameters $p_0 = 4/5$, $p_5 = 1/5$ with 2000 iterations.

\begin{figure}[h!]
    \centering
    \includegraphics[width=0.55\linewidth]{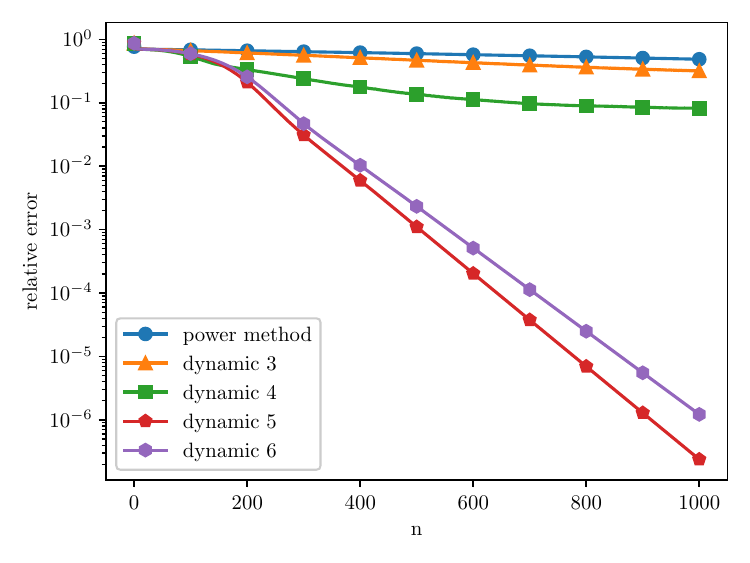}
    \caption{Convergence of the dynamic momentum method applied to a larger barbell matrix ($N = 16000$, $p = 1/4000$).}
    \label{fig:big-barbell-results}
\end{figure}

\subsubsection{Application to sparse matrices}
It is worth emphasizing that the convergence criterion for Algorithm \ref{alg:generalized-momentum} requires some a priori knowledge of the spectrum of the matrix.  The dynamic nature of Algorithm \ref{alg:dynamic-generalized-momentum} reduces this requirement somewhat by estimating $\lambda_2$ on the fly, but still requires knowledge of the shape of the spectrum.  

There are instances, however, where a matrix with an unknown spectrum is likely to satisfy our spectrum requirements.  Consider a matrix with real dominant eigenvalues, but whose smaller eigenvalues have small imaginary components.  Such a matrix does not satisfy the convergence criteria for algorithms designed for symmetric matrices.  However, the spectrum of such a matrix is likely to lie within $\lambda_2\Gamma$ when, for example, $p$ is an interpolated combination of $(1/2, 0, 1/2)$ and either $(2/3, 0, 0, 1/3)$ or $(3/4, 0, 0, 0, 1/4)$.  With such a choice of $p$, Algorithm \ref{alg:dynamic-generalized-momentum} can be viewed as a method to decrease the sensitivity of the dynamic momentum method of \cite{austin2024dynamically} to small non-symmetric perturbations.

To verify the convergence rate of Algorithm \ref{alg:dynamic-generalized-momentum} and demonstrate its potential applicability, we use the SuiteSparse Matrix Collection web interface \cite{kolodziej2019suitesparse} to find non-symmetric matrices in the collection \cite{davis2011university} with strong diagonal weights, as such matrices may have real dominant eigenvalues and therefore satisfy the criteria for our method.  In at least three instances, Algorithm \ref{alg:dynamic-generalized-momentum} outperforms both the power method and the first-order dynamic momentum method of \cite{austin2024dynamically}.  A combination of first-order and second-order momentum terms (dynamic 2-3) generally outperforms the methods corresponding to pure hypocycloids; see Figure \ref{fig:real-examples}.

When our dynamic momentum methods are applied to {\tt hor\_131}, the convergence exceeds the theoretical rate, likely due to a larger spectral gap.  For the matrix {\tt windtunnel\_evap\_3d}, the lower imaginary modes appear to feature prominently enough to interfere with the convergence of the first-order dynamic momentum method: not enough to prevent convergence altogether, but enough to slow it down.  

\begin{figure}[h!]
    \centering
    \includegraphics[width=0.49\linewidth]{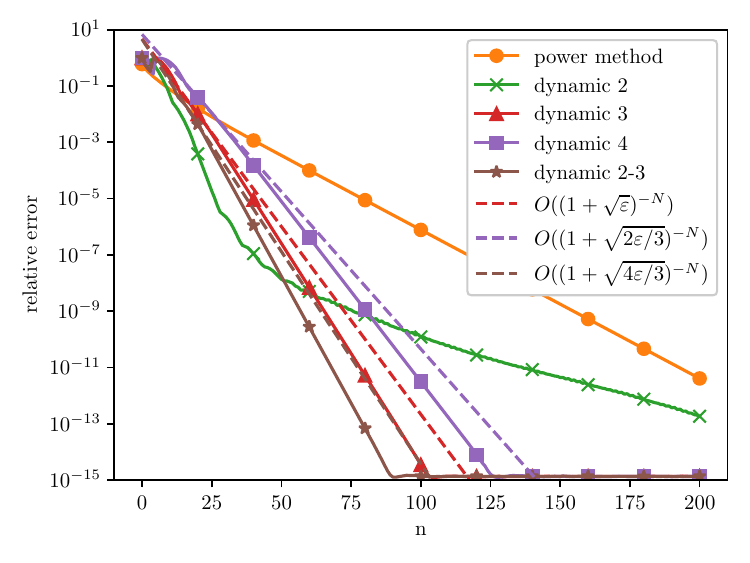}
    \includegraphics[width=0.49\linewidth]{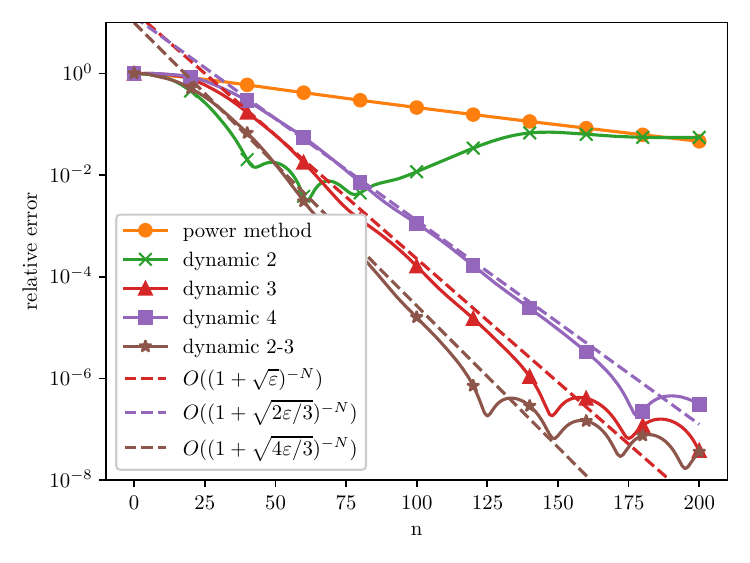}
    
    \includegraphics[width=0.49\linewidth]{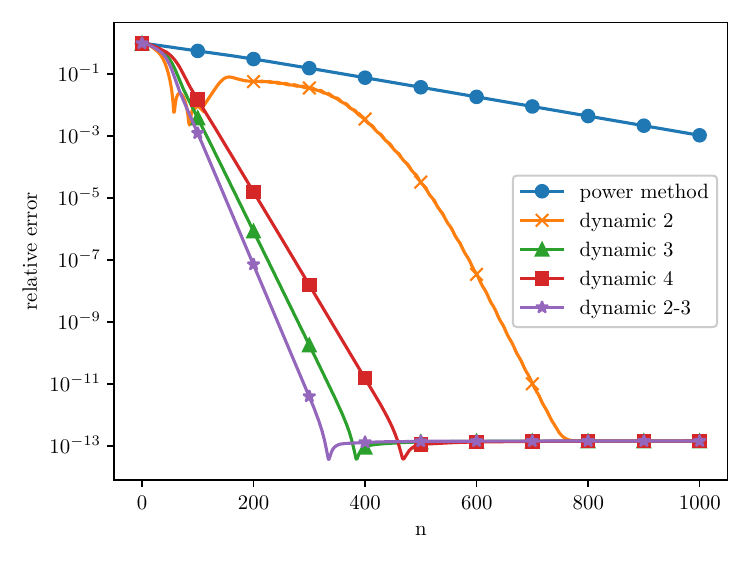}
    \caption{Convergence of the dynamic momentum method applied to three $N \times N$ sparse matrices from \cite{davis2011university}.  Left: {\tt hor\_131}, $N = 434$.  Right: {\tt young1c}, $N = 841$. Bottom: {\tt windtunnel\_evap\_3d}, $N = 40816$.  The probability parameters are given in Table \ref{tab:dynamic-parameters}.}
    \label{fig:real-examples}
\end{figure}

\section{Proof of main results}

\subsection{Proof of Theorem \ref{thm:charcurve}}
\label{sec:proofofthmcharcurve}
The proof of Theorem \ref{thm:charcurve} is divided into two lemmas, which together directly establish the result. First, in Lemma \ref{locationcusps} we characterize the location of the cusps. Second, in Lemma \ref{lem:smoothradiallyconvex}, we show that the curve is smooth except at the cusps and defines a radially convex region. Recall that a cusp for a parametric curve in the complex plane is a point where the derivative (of both real and imaginary parts) is zero, and the derivative in the direction of the tangent changes sign. 

\begin{lemma} \label{locationcusps}
Suppose that $p = (p_0,\ldots,p_m)$ is a probability vector
satisfying the mean-zero condition 
\eqref{meanzero} with $p_1 = 0$ and $p_j > 0$ for some $j \ge 3$. Define
\begin{equation} \label{def:curvelem}
z(t) = \sum_{j=0}^m p_j e^{i(1-j)t}, \quad \text{for} \quad t \in [0,2\pi].
\end{equation}
Then,
$
z'(t) = 0,
$
if and only if 
$t = 2\pi k/n$ for $k\in \{0,\ldots,n-1\}$, where $n$ is the greatest common divisor 
$$
n = \gcd \{j \in \{2,\ldots,m\} : p_j > 0\}.
$$
Moreover, at each point where $z'(t) = 0$, there is a cusp.
\end{lemma}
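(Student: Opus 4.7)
The plan is to compute $z'(t)$ directly, use the mean-zero condition to convert the vanishing condition into a sum of non-negative terms, and then extract a number-theoretic condition on $t$. Differentiating \eqref{def:curvelem} gives
$$
z'(t) = i\sum_{j=0}^m p_j(1-j)e^{i(1-j)t}.
$$
Using $p_1 = 0$ and the mean-zero condition rewritten as $p_0 = \sum_{j\geq 2}(j-1)p_j$, I would factor out $ie^{it}$ and collect terms to obtain
$$
z'(t) = ie^{it} \sum_{j \geq 2}(j-1)p_j\bigl(1 - e^{-ijt}\bigr).
$$

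Next, take real parts. Since each coefficient $(j-1)p_j$ is non-negative and each $1 - \cos(jt)$ is non-negative, the real part is a sum of non-negative terms, and vanishing forces $\cos(jt) = 1$ for every $j$ in the support $S = \{j \geq 2 : p_j > 0\}$; these same equations automatically make $\sin(jt) = 0$, so the imaginary part also vanishes. Thus $z'(t) = 0$ iff $jt \in 2\pi\mathbb{Z}$ for all $j \in S$. Writing $t = 2\pi s$ with $s = p/q$ in lowest terms, this is equivalent to $q \mid j$ for every $j \in S$, i.e., $q \mid \gcd(S) = n$. Hence the solutions in $[0,2\pi)$ are exactly $t = 2\pi k/n$ for $k \in \{0,\dots,n-1\}$.

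For the cusp verification, I would evaluate $z''(t_0)$ at $t_0 = 2\pi k/n$. Since $e^{-ijt_0} = 1$ for every $j$ with $p_j \neq 0$ (trivially for $j = 0$, and by the previous step for $j \in S$),
$$
z''(t_0) = -e^{it_0}\sum_{j=0}^m p_j(1-j)^2 = -\sigma^2 e^{it_0},
$$
which is non-zero since $p_0 > 0$ forces $\sigma^2 > 0$. By the same cancellation, $z(t_0) = e^{it_0}$, so the cusps sit on the unit circle at the claimed locations. The local Taylor expansion $z(t) - z(t_0) = \tfrac{1}{2}z''(t_0)(t-t_0)^2 + O((t-t_0)^3)$ yields $z'(t) \approx z''(t_0)(t - t_0)$, which reverses sign as $t$ crosses $t_0$, matching the cusp criterion stated in the excerpt.

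The main obstacle is the number-theoretic step that converts simultaneous divisibility constraints modulo $2\pi$ into a single gcd condition on $t$; the rest is a careful bookkeeping exercise in which the mean-zero condition and the non-negativity of $(j-1)p_j$ for $j\geq 2$ do the real work, turning the critical-point equation into a sum of non-negative terms that must vanish individually.
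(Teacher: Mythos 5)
Your core argument matches the paper's in its essential mechanism: both proofs rewrite $z'(t)$ using the mean-zero condition and observe that the non-negativity of the weights $(j-1)p_j$ for $j \geq 2$ forces the phases $e^{-ijt}$ to align simultaneously at $1$ across the support. The paper handles the two directions of the iff separately, invoking the equality case of the triangle inequality for the necessity direction; you streamline this by factoring out $ie^{it}$ to get $z'(t) = ie^{it}\sum_{j\geq 2}(j-1)p_j(1-e^{-ijt})$ and then observing that the real part $\sum_{j\geq 2}(j-1)p_j(1-\cos(jt))$ is a sum of non-negative terms, which vanishes iff each term vanishes. This is the same equality case in slightly cleaner clothing, and it handles both directions at once. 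A genuine addition in your proposal is the explicit verification that each critical point is actually a cusp: you compute $z''(t_0) = -\sigma^2 e^{it_0} \neq 0$ (nonzero since $\sigma^2 \geq p_0 > 0$) and use the local Taylor expansion $z'(t) \approx z''(t_0)(t-t_0)$ to confirm the tangential sign change required by the paper's definition of a cusp. The paper's proof only characterizes the zeros of $z'(t)$ and leaves the ``moreover'' clause implicit, so your second-derivative check fills a small gap. One cosmetic note: avoid writing the reduced fraction as $s = p/q$, since $p$ already denotes the probability vector.
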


We illustrate Lemma \ref{locationcusps} in Figure \ref{figwherearecusps}.

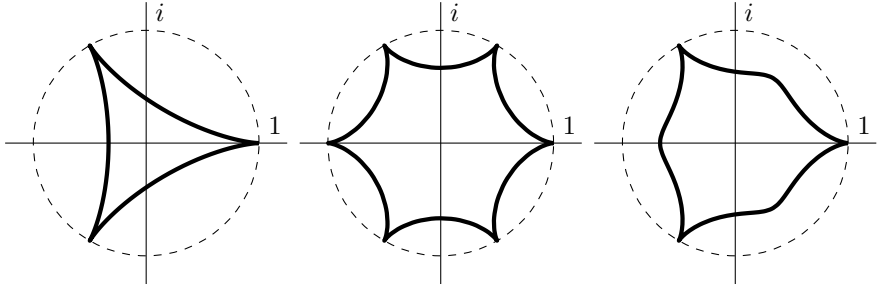
\begin{figure}[h!]
\centering
\begin{tikzpicture}[scale=1.5]
\draw (-1.25, 0) -- (1.25, 0);
\draw (0, -1.25) -- (0, 1.25);
\draw (1, 0) node[anchor=south west]{$1$};
\draw (0, 1) node[anchor=south west]{$i$};
\draw [dashed] (0,0) circle (1);
\draw [ultra thick, domain = 0:360, samples = 60, line join=round]
    plot[smooth] ({2/3*cos(\x) + 1/3*cos(-2*\x)}, {2/3*sin(\x) + 1/3*sin(-2*\x)});
\end{tikzpicture}
\begin{tikzpicture}[scale=1.5]
\draw (-1.25, 0) -- (1.25, 0);
\draw (0, -1.25) -- (0, 1.25);
\draw (1, 0) node[anchor=south west]{$1$};
\draw (0, 1) node[anchor=south west]{$i$};
\draw [dashed] (0,0) circle (1);
\draw [ultra thick, domain = 0:360, samples = 60, line join=round]
    plot[smooth] ({5/6*cos(\x) + 1/6*cos(-5*\x)}, {5/6*sin(\x) + 1/6*sin(-5*\x)});
\end{tikzpicture}
\begin{tikzpicture}[scale=1.5]
\draw (-1.25, 0) -- (1.25, 0);
\draw (0, -1.25) -- (0, 1.25);
\draw (1, 0) node[anchor=south west]{$1$};
\draw (0, 1) node[anchor=south west]{$i$};
\draw [dashed] (0,0) circle (1);
\draw [ultra thick, domain = 0:360, samples = 60, line join=round]
    plot[smooth] ({(1/2)*(5/6*cos(\x) + 1/6*cos(-5*\x)) + (1/2)*(2/3*cos(\x) + 1/3*cos(-2*\x))}, {(1/2)*(5/6*sin(\x) + 1/6*sin(-5*\x)) + (1/2)*(2/3*sin(\x) + 1/3*sin(-2*\x))});
\end{tikzpicture}
\caption{Left to right: we plot \eqref{def:curvelem} for
$p = (\frac{2}{3},0,0,\frac{1}{3})$, $p=(\frac{5}{6},0,0,0,0,0,\frac{1}{6})$,
$p = (\frac{9}{12},0,0,\frac{1}{6},0,0,\frac{1}{12})$. By Lemma \ref{locationcusps} these curves have $3$, $6$, and $3 = \gcd(3,6)$ cusps, respectively.} \label{figwherearecusps}
\end{figure} 

\begin{proof}[Proof of Lemma \ref{locationcusps}] First, we assume that $t_0 = 2 \pi k/n$ for $n =\text{gcd}(\{j \in \{2,\ldots,m\} : p_j > 0\})$, and will show that $z'(t_0)  = 0$. Observe that
$$
z'(t_0) = \sum_{j=0}^m p_j i(1-j) e^{i(1 -j) t_0} = ie^{i t_0} \sum_{j=0}^m p_j (1-j) e^{-i j t_0}.
$$
By the choice of $t_0$, for each fixed $j$, we have $j t_0 = 2\pi \ell$ for some integer $\ell$ whenever $p_j > 0$. 
As a consequence, $e^{-i j t_0} = 1$ when $p_j \neq 0 $, which implies
$$
z'(t_0) = i\sum_{j=0}^m p_j (1-j) = 0,
$$
where the final equality follows from the assumption $\sum_{j=0}^m (1-j) p_j = 0$.

Second, we assume that $z'(t) = 0$ and will show that $t = 2\pi k/n$ where $n = \text{gcd}(\{j \in \{2,\ldots,m\} : p_j > 0\})$.
We have
$$
z'(t) = i e^{i t} \sum_{j=0}^m p_j (1-j) e^{- i j t} = 0
\quad
\implies
\quad
p_0 = \sum_{j=2}^m p_j (j-1) e^{-i j t}.
$$
Note that by assumption $p_0 > 0$ and $p_j(j-1) \ge 0$ for $j \in \{2,\ldots,m\}$. By the triangle inequality
$$
\left| \sum_{j=2}^m p_j (j-1) e^{-i j t} \right| \le 
 \sum_{j=2}^m \left| p_j (j-1) e^{-i j t} \right| = \sum_{j=2}^m p_j (j-1) = p_0,
$$
where the final equality follows from the assumption that $\sum_{j=0}^m (1-j) p_j = 0$. The triangle inequality for complex numbers only holds with equality when all of the complex numbers have the same phase. Moreover, since $p_0 > 0$, the phase must be $0 \mod 2\pi$. This only happens when $j t = 2\pi \ell$ for some integer $\ell$. We need this to happen for all $j \ge 2$ such that $p_j > 0$. Hence, $t = 2 \pi k/n$ for some integer $k$ where $n = \text{gcd}(\{j \in \{2,\ldots,m\} : p_j > 0\})$.

It remains to show that there is a cusp at each of the points where $z'(t) = 0$. By symmetry, it suffices to show that there is a cusp at $t_0 = 0$. We will show that 
\begin{equation} \label{eq:cuspformula}
z(t) = 1 + a t^2 +i b t^3 + \mathcal{O}(t^4), \quad \text{as} \quad t \to 0,
\end{equation}
where $a,b \in \mathbb{R}$ are nonzero real numbers, that is, near zero, the curve in the complex plane acts like the parametric curve $(1 + a t^2,b t^3) \in \mathbb{R}^2$, which has a cusp when $t=0$. Expanding $z(t)$ in Taylor series at $0$ gives
$$
a = \frac{z''(0)}{2} = -\frac{1}{2}\sum_{j=0}^m (1-j)^2 p_j, \quad \text{and} \quad i b = \frac{z'''(0)}{6} = -\frac{i}{6} \sum_{j=0}^m (1-j)^3 p_j.
$$
The mean-zero condition 
\eqref{meanzero}  and the assumption $p_j > 0$ for some $j \ge 3$ ensure that $a$ and $b$ are nonzero,  so \eqref{eq:cuspformula} is established, and the proof is complete.
\end{proof}

\begin{lemma} \label{lem:smoothradiallyconvex}
Suppose that $p = (p_0,\ldots,p_m)$ is a probability vector
satisfying the mean-zero condition \eqref{meanzero} with $p_1 = 0$ and $p_j >0$ for some $j \ge 3$. Define
$$
z(t) = \sum_{j=0}^m p_j e^{i(1-j)t}, \quad \text{for} \quad t \in [0,2\pi].
$$
Then, $z(t)$ encloses a radially convex set.
\end{lemma}

\begin{proof}[Proof of Lemma \ref{lem:smoothradiallyconvex}]
Suppose that $p_j > 0$ for some $j \ge 3$. Then,
$$
|z(t)| = \left| \sum_{j=0}^m p_j e^{i(1-j)t} \right| \ge p_0 - \sum_{j=2}^m p_j = \sum_{j=2}^m (j-2) p_j > 0,
$$
where the final equality follows from \eqref{meanzero}, and the final inequality uses the fact that $p_j > 0$ for some $j \ge 3$.
Let $p^{(k)}$ be the probability vector defined by $p_0^{(k)} = (k-1)/k$ and $p^{(k)}_k = 1/k$  associated with the $k$-cusp hypocycloid (see Section \ref{sec:hypocycloid}). Assume $p^{(k)}$ is padded by zeros so that it has the same length as $p$.
Then, we can write
$$
p = \sum_{k=2}^m w_k p^{(k)},
$$
where $w_k = k p_k$ for $k = 2,\ldots,m$. Indeed, for $j \ge 2$ we have
$$
\sum_{k=2}^m w_k p^{(k)}_j = w_j p^{(j)}_j = j p_j \frac{1}{j} = p_j,
$$
and 
$$
\sum_{k=2}^m w_k p^{(k)}_0 = \sum_{k=2}^m k p_k \frac{k-1}{k} = \sum_{k=2}^m (k-1) p_k = p_0,
$$
where the final equality follows from \eqref{meanzero}. By definition, the weights $w_k$ are nonnegative, and we have
$$
\sum_{k=2}^m w_k = \sum_{k=2}^m k p_k = \sum_{k=0}^m p_k + \sum_{k=0}^m (k-1) p_k = 1, 
$$
where the final equality follows from \eqref{meanzero} and the fact that $\sum_{k=0}^m p_k = 1$. By linearity, we have
\begin{equation} \label{eq:convexcomb}
z(t) = \sum_{k=2}^m w_k z^{(k)}(t),
\end{equation}
where $z^{(k)}(t)$ is the $k$-cusp hypocycloid curve defined by \eqref{eq:gamma-higher-order}. Note that we have already shown that $|z(t)| > 0$, which means that we can  write $z(t)$ in polar form as
$$
z(t) = r(t) e^{i \theta(t)},
$$
for $r(t) > 0$ and $\theta(t) \in [0,2\pi]$. 

To show that $z(t)$ encloses a radially convex domain, it suffices to show that in this polar form $\theta'(t) \ge 0$. Taking the logarithmic derivative of $z(t)$ yields 
$$
\frac{z'(t)}{z(t)} 
= \frac{r'(t)}{r(t)} + i \theta'(t),
$$
so it follows that $\Im (z'(t)/z(t)) = \theta'(t)$, where $\Im$ denotes the imaginary part. Since
$$
\Im \left( \frac{z'(t)}{z(t)} \right) = \Im \left( \frac{z'(t) \overline{z(t)}}{|z(t)|^2} \right), 
$$
we will show that
$$
\Im \left( z'(t) \overline{z(t)} \right) \ge 0.
$$
Expanding $z(t)$ as a convex combination
\eqref{eq:convexcomb}
of $k$-cusp hypocycloid curves
 gives
\begin{equation*}
\Im \left( z'(t) \overline{z(t)} \right) 
= \sum_{j,k=2}^m w_j w_k \Im \left( {z^{(j)}}'(t) \overline{z^{(k)}(t)} \right) ,
\end{equation*}
which can be expanded as 
\begin{multline} \label{eq:expandsumdiagoff}
 \sum_{j,k=2}^m w_j w_k \Im \left( {z^{(j)}}'(t) \overline{z^{(k)}(t)} \right) 
= \sum_{j=2}^m w_j^2 \Im \left( {z^{(j)}}'(t) \overline{z^{(j)}(t)} \right) \\
+ \sum_{2 \le j < k \le m}
w_j w_k
\Im \left( {z^{(j)}}'(t) \overline{z^{(k)}(t)} +
{z^{(k)}}'(t) \overline{z^{(j)}(t)} \right).
\end{multline}
To complete the proof, it suffices to show that
$$
\Im \left( {z^{(j)}}'(t) \overline{z^{(k)}(t)} +
{z^{(k)}}'(t) \overline{z^{(j)}(t)} \right) \ge 0,
\quad \forall j,k \in \{2,\ldots,m\}.
$$
This inequality implies that all the terms in both of the sums on the right-hand side of 
\eqref{eq:expandsumdiagoff} are nonnegative.
Using the definition  \eqref{eq:gamma-higher-order} of $z^{(j)}(t)$,
we have
$$
z^{(j)}(t) = \frac{j-1}{j} e^{i t} + \frac{1}{j} e^{(1-j) i t}.
$$
It is then straightforward to verify that 
\begin{multline} \label{eq:termsnonneg}
\Im \left( {z^{(j)}}'(t) \overline{z^{(k)}(t)} +
{z^{(k)}}'(t) \overline{z^{(j)}(t)} \right) = 
\frac{(j-1)(k-2)}{j k} \Big(1 - \cos( k t) \Big)  + \\ \frac{(j-2)(k-1)}{j k}\Big(1 - \cos(j t) \Big) + \frac{j +k -2}{j k}\Big(1 - \cos((k - j)t) \Big) .
\end{multline}
Since each of the three terms on the right-hand side is nonnegative for $j,k \in \{2,\ldots,m\}$, the proof is complete. 
\end{proof}

\subsection{Proof of Theorem \ref{thm:boundedrapid}}
\label{sec:proofthmboundedrapid}
The proof of Theorem \ref{thm:boundedrapid} is divided into six lemmas.
In Lemmas \ref{lem:roughlocation}, \ref{thm:root-estimate}, \ref{lemma:root-asymptotics}, and \ref{lem:trivialgrowthrate}, we establish some supporting results. In Lemma \ref{lem:growth-actually-happens} and \ref{lem:showboundedness}, we prove growth and boundedness results, respectively, which together establish Theorem \ref{thm:boundedrapid}.

\begin{lemma} \label{lem:roughlocation}
Suppose that $p = (p_0,\ldots,p_m)$ is a probability vector
satisfying the mean-zero condition \eqref{meanzero} with $p_1 = 0$ and $p_j > 0$ for some $j \ge 3$.  Let $Q_z(r)$ be the characteristic polynomial introduced in \eqref{eq:charpolyintro} defined by
$$
Q_z(r) = r^m - \frac{z}{p_0} r^{m-1} + \sum_{j=2}^m \frac{p_j}{p_0} r^{m-j},
$$
and let $\Gamma$ be the closure of the region enclosed by the curve parameterized by $z(t)$, $t \in [0,2\pi]$ defined in \eqref{eq:curveformula}.
Then, $Q_z(r)$ has exactly one root of magnitude strictly greater than $1$ when $z \in \mathbb{C} \setminus \Gamma$ and has all roots of magnitude strictly less than $1$ when $z \in \interior(\Gamma)$, where $\interior(\Gamma)$ denotes the interior of $\Gamma$.
\end{lemma}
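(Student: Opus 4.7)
The plan is to translate the root-location question into a preimage-counting question for the rational function $\psi(r) = \sum_{j=0}^m p_j r^{1-j}$ introduced in the Faber polynomial remark, where it is established that $\psi$ is a conformal bijection from $\{|r|>1\}$ (with $\infty$ sent to $\infty$) onto $\overline{\mathbb{C}} \setminus \Gamma$. First, I would observe that $Q_z(0) = p_m/p_0$, which is nonzero provided $p_m > 0$ (a harmless assumption, since otherwise $m$ may be reduced); thus $r = 0$ is never a root of $Q_z$. For $r \ne 0$, multiplying $Q_z(r) = 0$ by $p_0/r^{m-1}$ yields the equivalent relation $\psi(r) = z$, so the $m$ roots of $Q_z$ (counted with multiplicity) are precisely the $m$ preimages of $z$ under $\psi$.

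Given this reduction, the exterior case $z \in \mathbb{C} \setminus \Gamma$ follows immediately: the conformality of $\psi|_{\{|r|>1\}}$ supplies exactly one preimage with $|r|>1$, hence exactly one root of $Q_z$ of magnitude strictly greater than $1$. For the interior case $z \in \interior(\Gamma)$, the same bijection forces no preimage to lie in $\{|r|>1\}$, and a preimage on the unit circle would require $z = \psi(e^{it}) = z(t) \in \gamma$, contradicting $z \in \interior(\Gamma)$ since $\gamma$ is the boundary of $\Gamma$. Hence all $m$ roots lie strictly inside the unit disc, as claimed.

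The main obstacle is substantiating that $\psi$ really is a conformal bijection from $\{|r|>1\}$ onto $\mathbb{C} \setminus \Gamma$; the paper asserts this in the remark, but a self-contained argument depends on Theorem \ref{thm:charcurve}, which guarantees that $\gamma$ is a simple closed curve so that $\mathbb{C} \setminus \Gamma$ is a simply connected component of $\overline{\mathbb{C}} \setminus \gamma$. One clean way to pin this down is via the argument principle: at large $|r|$, $\psi(r) \sim p_0 r$, so for fixed $z \in \mathbb{C} \setminus \Gamma$ the change in argument of $\psi(r) - z$ around a large circle equals $2\pi$, while around $|r|=1$ it equals the winding number of $\gamma$ around $z$, which is zero since $z$ lies in the unbounded complementary component of the simple curve $\gamma$; the difference gives exactly one zero of $\psi - z$ in $\{|r|>1\}$. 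A parallel winding-number computation together with continuity of the roots in $z$ (they cannot cross $|r|=1$ unless $z$ crosses $\gamma$) reduces the interior case to verifying the claim at a single base point such as $z = 0$, where a direct triangle-inequality estimate using $p_0 \ge 1/2$ (which is forced by the mean-zero constraint) shows that all roots of $Q_0$ satisfy $|r|<1$.
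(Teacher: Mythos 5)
Your proposal is correct and takes a genuinely different route from the paper. The paper casts the recurrence as a first-order system with a companion-type matrix $\vec{T}(z)$, invokes the Gershgorin circle theorem for $|z|$ large to obtain the base case (exactly one eigenvalue outside the unit disk), observes that root moduli can only cross $1$ when $z$ lies on $\gamma$, and closes with a continuity argument plus a direct check that all roots of $Q_0$ lie strictly inside the unit disk. Your argument instead reduces $Q_z(r) = 0$ to $\psi(r) = z$ for the rational map $\psi(r) = \sum p_j r^{1-j}$ and counts the preimages via the argument principle on the annulus $\{1 < |r| < R\}$: the contribution from the large circle is $1$ because $\psi(r) \sim p_0 r$, and the contribution from $|r|=1$ is the winding number of $\gamma$ about $z$, which is $0$ outside $\Gamma$ and (when $\gamma$ is a positively oriented simple closed curve) $1$ inside. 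This buys a cleaner and more conceptual link to the conformal-map / Faber viewpoint that the paper only sketches in a remark; the Gershgorin approach is more elementary and avoids explicit reference to winding numbers. Both proofs ultimately rest on the same two ingredients — that roots cross $|r|=1$ precisely when $z \in \gamma$, and a base-point count — and both implicitly require Theorem~\ref{thm:charcurve} so that $\gamma$ is simple and the interior and exterior components are connected; you flag this dependency explicitly, which is a point the paper's write-up is somewhat casual about.

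One small caveat worth noting: your fallback base case at $z = 0$ needs $p_0 > 1/2$ strictly for the triangle-inequality estimate to give $|r| < 1$ rather than $|r| \le 1$. The mean-zero constraint only gives $p_0 \ge 1/2$, with equality exactly for the Chebyshev case $p = (1/2, 0, 1/2)$, where $Q_0(r) = r^2 + 1$ has roots of modulus $1$. In that case, however, $\gamma$ degenerates to the doubly traversed segment $[-1,1]$ and $\interior(\Gamma)$ is empty, so the interior clause of the lemma is vacuous. (The paper's own assertion that ``repeated application of the matrix results in $0$'' at $z=0$ fails for the same example.) It would strengthen your write-up to note this degenerate case explicitly, but it does not affect the correctness of the result.
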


\begin{proof}[Proof of Lemma \ref{lem:roughlocation}]
From \eqref{recurrence} with $p_1 = 0$ we have
\begin{equation} \label{recurrenceformulainlemma}
P_{k+1}(z) = \frac{z}{p_0} P_k(z) - \sum_{j=2}^m \frac{p_j}{p_0} P_{k+1-j}(z), \quad \text{for} \quad k \ge m-1.
\end{equation}
We can write this as a linear first-order difference equation on $ m$-dimensional vectors as follows:
$$
\begin{pmatrix}
P_{k+1}(z) \\
P_k(z) \\
\vdots \\
P_{k-m+3}(z) \\ 
P_{k-m+2}(z)
\end{pmatrix}
=
\vec{T}(z)
\begin{pmatrix}
P_{k}(z) \\
P_{k-1}(z) \\
\vdots \\
P_{k-m+2}(z) \\
P_{k-m+1}(z)
\end{pmatrix},
$$
where $\vec{T}(z)$ is the $m \times m$ matrix 
\begin{align}\label{eqn:Asys}
\vec{T}(z) := \begin{pmatrix}
\frac{z}{p_0} &
-\frac{p_2}{p_0} &
\cdots &
-\frac{p_{m-1}}{p_0} &
-\frac{p_m}{p_0} \\
1 &  &  &  &  \\
 & 1 &  &  &  \\
 &  & \ddots &  &  \\
 &  &  &  1 &  \\
\end{pmatrix}.
\end{align}
When $z$ is sufficiently large, by the Gershgorin circle theorem the matrix
$\vec{T}(z)$ from \eqref{eqn:Asys}
has $1$ eigenvalue of magnitude strictly greater than $1$, and $m-1$  eigenvalues of magnitude at most $1$.
The characteristic polynomial of this matrix (also called the characteristic polynomial of the recurrence) is
$$
Q_z(r) = r^m - \frac{z}{p_0} r^{m-1} + \sum_{j=2}^m \frac{p_j}{p_0} r^{m-j}.
$$
To see that $Q_z(r)$ is the characteristic polynomial of $\vec{T}(z)$, note that it is a $90$-degree rotation of the Companion Matrix for the polynomial $Q_z(r)$. Suppose that $Q_z$ has a root of magnitude $1$. Then,
$$
e^{i t m} - \frac{z}{p_0} e^{i t (m-1)} +\sum_{j=2}^m \frac{p_j}{p_0} e^{i t (m-j)} = 0.
$$
Solving for $z$ gives
\begin{equation} \label{condswitch}
z = \sum_{j=0}^m p_j e^{it(1-j)}.
\end{equation}
The zeros of $Q_z(r)$ are continuous functions of $z$. Thus, the magnitude of the roots can only switch from strictly less than $1$ to strictly more than $1$ when \eqref{condswitch} is satisfied, which is an equation for the curve $\gamma$. By a continuity argument, for $z$ outside of this curve, we have exactly one eigenvalue of magnitude strictly greater than one. 

To complete the proof, we demonstrate that when $z=0$ all of the eigenvalues of the matrix are strictly less than $1$.  Using the assumption that at least one $p_j > 0$ for $j \geq 3$, we have
$$p_0 = \sum_{j=2}^m (j-1)p_j > \sum_{j=2}^m p_j$$
which implies
$$ \sum_{j=2}^m p_j/p_0 < 1.$$
Now, for $z = 0$, consider 
$$Q_0(r)r^{-m} = 1 + \sum_{j=2}^m \frac{p_j}{p_0}r^{-j}.$$
If $Q_0(r) = 0$, then 
$$\left| \sum_{j=2}^m \frac{p_j}{p_0} r^{-j} \right| = 1.$$
But if $|r| \geq 1$, then
\begin{equation} \label{eq:keyineqtouse}
\left| \sum_{j=2}^m \frac{p_j}{p_0} r^{-j} \right| \leq  \sum_{j=2}^m \frac{p_j}{p_0} \left| r^{-j} \right| \leq \sum_{j=2}^m \frac{p_j}{p_0} < 1.
\end{equation}
Hence, we conclude that $Q_0(r) = 0$ only if $|r| < 1$.  
\end{proof}

\begin{lemma} \label{thm:root-estimate}
Let $z = 1 + \varepsilon$ for $\varepsilon > 0$.  Then, the characteristic polynomial $Q_z(r)$ defined in  \eqref{eq:charpolyintro}
for a probability vector $p = (p_0,\ldots,p_m)$ that satisfies \eqref{meanzero} with $p_1 = 0$,
has a real root $r$ satisfying
$$r \geq 1 + \sigma^{-1}\sqrt{2\varepsilon},$$
where $\sigma$ is given by \eqref{eq:variance}.  
\end{lemma}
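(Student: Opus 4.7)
The plan is to rewrite the characteristic equation as a fixed point problem for the ``conformal map'' $\psi$ from \eqref{eq:conformalmap}, and then use a second-order Taylor estimate to locate a real root $r$.

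First I would observe that multiplying $Q_z(r) = 0$ through by $p_0 r^{1-m}$ shows the root equation is equivalent to $\psi(r) = z$, where
$$
\psi(r) = \sum_{j=0}^m p_j r^{1-j} = p_0 r + \sum_{j=2}^m p_j r^{1-j}
$$
(using $p_1 = 0$). So it suffices to find a real $r \ge 1 + \sigma^{-1}\sqrt{2\varepsilon}$ with $\psi(r) = 1+\varepsilon$.

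Next I would compute the relevant Taylor data at $r=1$. Directly, $\psi(1) = \sum_j p_j = 1$; $\psi'(r) = \sum_j p_j(1-j)r^{-j}$, so by the mean-zero condition \eqref{meanzero}, $\psi'(1) = \sum_j p_j(1-j) = 0$; and $\psi''(r) = \sum_j p_j\, j(j-1)\, r^{-j-1}$, so
$$
\psi''(1) = \sum_j p_j\, j(j-1) = \sum_j p_j(j-1)^2 + \sum_j p_j (j-1) = \sigma^2 + 0 = \sigma^2,
$$
using the definition of $\sigma^2$ in \eqref{eq:variance} and mean-zero once more. The key monotonicity fact I would then establish is that $\psi'(r) > 0$ for $r > 1$: writing $\psi'(r) = p_0 + \sum_{j\ge 2} p_j(1-j)r^{-j}$ and using that $(1-j) < 0$ together with $r^{-j} < 1$ gives $\psi'(r) > p_0 + \sum_{j\ge 2} p_j(1-j) = 0$. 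Combined with $\psi(1)=1$ and $\psi(r)\to\infty$ as $r\to\infty$ (since $p_0 > 0$), this guarantees a unique real $r^* > 1$ with $\psi(r^*) = 1+\varepsilon$, and $r^*$ is a root of $Q_z$.

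The main step is a one-sided second-order estimate. Since $\psi''(u) = \sum_j p_j\, j(j-1)\, u^{-j-1}$ is a sum of non-negative terms and each factor $u^{-j-1} \le 1$ for $u \ge 1$, we get the pointwise bound $\psi''(u) \le \sum_j p_j j(j-1) = \sigma^2$ on $[1,\infty)$. Using $\psi(1) = 1$ and $\psi'(1) = 0$, integrating twice yields
$$
\psi(r) - 1 = \int_1^r \int_1^s \psi''(u)\, du\, ds \le \frac{\sigma^2}{2}(r-1)^2, \qquad r \ge 1.
$$
Evaluating at $r_0 := 1 + \sigma^{-1}\sqrt{2\varepsilon}$ gives $\psi(r_0) - 1 \le \frac{\sigma^2}{2}\cdot \sigma^{-2}\cdot 2\varepsilon = \varepsilon$, i.e. $\psi(r_0) \le 1+\varepsilon = \psi(r^*)$. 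Since $\psi$ is strictly increasing on $[1,\infty)$, this forces $r^* \ge r_0 = 1 + \sigma^{-1}\sqrt{2\varepsilon}$, completing the proof.

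The only mild obstacle is the second step: one needs the right direction of inequality in the Taylor remainder, which is why I would bound $\psi''$ from above (rather than use a general Lagrange form), exploiting that $\psi''$ is a \emph{non-negative} combination of decreasing functions on $[1,\infty)$. Everything else is bookkeeping with the mean-zero identity.
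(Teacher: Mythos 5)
Your proof is correct and uses essentially the same strategy as the paper: invert $Q_z(r)=0$ to the form $z = \psi(r)$, observe that the mean-zero condition kills the linear Taylor coefficient at $r=1$, and then establish the one-sided bound $\psi(r) \le 1 + \tfrac{\sigma^2}{2}(r-1)^2$ for $r \ge 1$ to conclude. The only difference is cosmetic: the paper obtains the concavity bound by computing $\psi'''$ and observing it is negative on $(0,\infty)$ (so the quadratic Taylor polynomial overestimates via the Lagrange remainder), whereas you bound $\psi''(u) \le \psi''(1) = \sigma^2$ directly from the fact that $\psi''$ is a non-negative combination of decreasing functions, and integrate twice; you also make explicit the existence and uniqueness of the real root via the monotonicity of $\psi$ on $[1,\infty)$, which the paper leaves implicit (relying on its earlier Lemma~\ref{lem:roughlocation}).
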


\begin{proof}[Proof of Lemma \ref{thm:root-estimate}]
When $z = 1+\varepsilon$, by  Lemma \ref{lem:roughlocation}, we know that $Q_z$ has exactly one root of magnitude strictly greater than $1$, which determines the behavior of $P_n(z)$ as $n \to \infty$. Since $P_n(z)$ is real valued for $z=1+\varepsilon$, it follows that the root of magnitude strictly greater than $1$ is real. By \eqref{eq:conformalmap1} and the assumption $p_1 = 0$ any root $r$ of the characteristic polynomial $Q_z$ satisfies
\begin{equation} \label{eq:conditionforrootqn}
z = p_0 r  + \sum_{j=2}^m p_j r^{1 - j}.
\end{equation}
We now consider the Taylor series for $z$ as a function of $r$ centered at $r_0 = 1$.  The first coefficient is given by
\begin{equation} \label{taylora0}
a_0 = \sum_{j=0}^m p_j = 1.
\end{equation}
To find the second and third coefficients, note that
$$\frac{dz}{dr} = p_0 + \sum_{j=2}^m p_j (1 - j) r^{ - j},$$
and 
\begin{equation} \label{taylora1}
a_1 = \frac{dz}{dr}(1) =  p_0 + \sum_{j=2}^m p_j (1 - j) = \sum_{j=0}^m p_j (1 - j) = 0.
\end{equation}
Additionally,
\begin{equation} \label{taylora2}
\frac{d^2z}{dr^2} = \sum_{j=2}^m p_j j(j - 1) r^{ - (j + 1)},
\end{equation}
so 
$$a_2 = \frac{1}{2} \frac{d^2z}{dr^2}(1) =  \frac{1}{2} \sum_{j=2}^m p_j j(j - 1) > 0.$$
To establish an upper bound on the Taylor series approximation, we also compute
$$\frac{d^3z}{dr^3} = \sum_{j=2}^m -p_j j(j - 1)(j + 1) r^{ - (j + 2)},$$
which is negative for all $r > 0$.  Hence, the Taylor series up to quadratic terms of the characteristic polynomial serves as an upper bound for $z$, namely
$$ z \leq 1 + \left( \sum_{j=2}^m \frac{j(j - 1)}{2}p_j \right) (r-1)^2.$$
This upper bound holds for all $r \geq 1$.  We now set $z = 1 + \varepsilon$ and find that
$$1 + \varepsilon \leq 1 + \left( \sum_{j=2}^m \frac{j(j - 1)}{2}p_j \right) (r-1)^2, $$
and solving for $r$ under the restriction that $r > 1$ gives us a lower bound for the dominant zero
$$r \geq 1 + \sqrt{C_P\varepsilon},$$
where
$$C_P = \left( \sum_{j=2}^m \frac{j(j - 1)}{2}p_j \right)^{-1}.$$
To rewrite $C_P$ in terms of $\sigma$, we first extend the sum to $j=0,1$ since it is zero at those values, by which
$$
\sum_{j=2}^m \frac{j(j-1)}{2}p_j
= \sum_{j=0}^m \frac{j(j-1)}{2}p_j.
$$
Next, using the fact that
$
\sum_{j=0}^m (1-j) p_j = 0,
$
we have
$$
\sum_{j=0}^m \frac{j(j-1)}{2}p_j
= \sum_{j=0}^m \left(\frac{(j-1)^2}{2}p_j - \frac{1}{2}(1-j)p_j\right) =
\sum_{j=0}^m \frac{(j-1)^2}{2}p_j.
$$
From \eqref{eq:variance}, we have
$$
\sum_{j=0}^m \frac{(j-1)^2}{2}p_j = \frac{\sigma^2}{2}.
$$
This allows us to conclude that 
$r \geq 1 + \sqrt{2\sigma^{-2}\varepsilon} = 1 + \sigma^{-1}\sqrt{2\varepsilon}.$

\end{proof}

\begin{lemma} \label{lemma:root-asymptotics}
Let $z = 1 + \varepsilon$ for $\varepsilon > 0$.  Then, the characteristic polynomial $Q_z(r)$ defined in  \eqref{eq:charpolyintro} for a probability vector $p = (p_0,\ldots,p_m)$ that satisfies \eqref{meanzero} with $p_1 = 0$ has roots $r_1$ and $r_2$ satisfying
$$r_1 = 1 + \sigma^{-1}\sqrt{2\varepsilon} + \mathcal{O}(\varepsilon), \quad \text{and} \quad r_2 = 1 - \sigma^{-1} \sqrt{2\varepsilon} + \mathcal{O}(\varepsilon),
$$
as $\varepsilon \to 0$,  where $\sigma$ is given by \eqref{eq:variance}.  
\end{lemma}
\begin{proof}[Proof of Lemma \ref{lemma:root-asymptotics}]
Recall from \eqref{eq:conditionforrootqn} that  $Q_z(r)$ has $r$ as a root exactly when 
$$
z = p_0 r  + \sum_{j=2}^m p_j r^{1 - j}.
$$
Let $z = 1+\varepsilon$ and $r = 1 + \delta$. Expanding $z$ in a Taylor series centered at $r=1$ using the Taylor coefficients computed in \eqref{taylora0}, \eqref{taylora1}, and \eqref{taylora2}, gives
$$
1+\varepsilon = 1  + \frac{\sigma^2}{2} \delta^2 + \mathcal{O}(\delta^3).
$$
Solving for $\delta$ gives
$\delta = \pm \sigma^{-1} \sqrt{2 \varepsilon} + \mathcal{O}(\varepsilon).$ It follows that the polynomial $Q_{1+\varepsilon}$ has roots
$$
r_1 = 1 + \sigma^{-1} \sqrt{2 \varepsilon} + \mathcal{O}(\varepsilon), \quad \text{and} \quad 
r_2 = 1 - \sigma^{-1} \sqrt{2 \varepsilon} + \mathcal{O}(\varepsilon), 
$$
which completes the proof.
\end{proof}

Before establishing a fast growth rate in Lemma \ref{lem:growth-actually-happens}, we establish a trivial growth rate, which will be useful in the proof of the fast growth rate.

\begin{lemma} \label{lem:trivialgrowthrate}
Let $P_n(z)$, $n \ge 0$ be the family of polynomials defined by \eqref{recurrence} for a probability vector $p = (p_0,\ldots,p_m)$ that satisfies \eqref{meanzero} with $p_1 = 0$. Then,
$$
P_n(1 + \varepsilon) \ge (1 + \varepsilon)^n, \quad \text{for all} \quad \varepsilon \ge 0 \quad \text{and} \quad n \ge 0.
$$
\end{lemma}
\begin{proof}
Let $\varepsilon \ge 0$ be arbitrary. We start by proving
$P_{n+1}(1+\varepsilon) \ge (1+\varepsilon) P_n(1+\varepsilon)$ for all $n \ge 0$  by strong induction.

For the base cases $0 \leq k \leq m-1$, we have $P_k(1 + \varepsilon) = (1 + \varepsilon)^k$, so the result holds trivially. For the inductive step, let $n \ge m-1$ be given, and assume
$P_{k+1}(1+\varepsilon) \ge (1+\varepsilon) P_k(1+\varepsilon)$ for $n-1 \ge k \ge 0$. The recurrence \eqref{recurrence} then gives us
$$P_{n+1}(1 + \varepsilon) = \frac{(1 + \varepsilon)}{p_0} P_n(1 + \varepsilon) - \sum_{j=2}^m \frac{p_j}{p_0} P_{n+1-j}(1 + \varepsilon).$$
By our inductive hypothesis, we have $P_{n+1-j}(1 + \varepsilon) \le P_{n}(1 + \varepsilon)$ for all $2 \le j \le m$.  As a result, 
\begin{align*}
    P_{n+1}(1 + \varepsilon) &\ge \frac{(1 + \varepsilon)}{p_0} P_n(1 + \varepsilon) - \sum_{j=2}^m \frac{p_j}{p_0} P_{n}(1 + \varepsilon) \\
    &= \frac{1}{p_0}\left(1 + \varepsilon - \sum_{j=2}^m p_j\right)P_n(1+\varepsilon) \\
    &= \frac{1}{p_0}\left(1 + \varepsilon - ( 1 - p_0) \right)P_n(1 + \varepsilon) 
    \\
    &= \left(1 +\frac{\varepsilon}{p_0}\right) P_n(1 + \varepsilon) \\ 
    &\ge (1 + \varepsilon)P_n(1 + \varepsilon), 
\end{align*}
where we used $p_0 + \sum_{j=2}^m p_j = 1$, and the last inequality holds because $p_0 < 1$. Since $P_0(1+\varepsilon) =1$, the inequality
$P_{n+1}(1 + \varepsilon) \ge (1+\varepsilon) P_n(1+\varepsilon)$ and a straightforward induction argument complete the proof.
\end{proof}

\begin{lemma} \label{lem:growth-actually-happens}
Let $P_n(z)$, $n \ge 0$ be the family of polynomials defined by \eqref{recurrence} for a probability vector $p = (p_0,\ldots,p_m)$ that satisfies \eqref{meanzero} with $p_1 = 0$. Then 
\begin{align}\label{eqn:growth-happens}
P_n(1 + \varepsilon)  \ge c (1 + \sigma^{-1} \sqrt{2\varepsilon})^n, \quad  \text{for all} \quad \varepsilon \ge 0 \quad \text{and}  \quad n \ge 0,
\end{align}
where $c > 0$ is a constant that only depends on $p$, and the parameter $\sigma$ is defined in \eqref{eq:variance}.
\end{lemma}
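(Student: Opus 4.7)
The plan is to use a Binet-type decomposition of $P_n(1+\varepsilon)$ over the roots of the characteristic polynomial $Q_{1+\varepsilon}(r)$, isolate the contribution of the dominant real root $r_1(\varepsilon) \geq 1 + \sigma^{-1}\sqrt{2\varepsilon}$ supplied by Lemma \ref{thm:root-estimate} (and unique in $\{|r|>1\}$ by Lemma \ref{lem:roughlocation}), and show that the coefficient of $r_1(\varepsilon)^n$ in this representation is bounded below by a positive constant depending only on $p$, uniformly in $\varepsilon > 0$.

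First I would set up the representation. By Lemma \ref{lem:roughlocation}, $r_1$ is the unique root of $Q_{1+\varepsilon}$ of modulus strictly greater than one; it is real by uniqueness together with conjugate symmetry of the roots, and $r_1 \geq 1 + \sigma^{-1}\sqrt{2\varepsilon}$ by Lemma \ref{thm:root-estimate}. The remaining roots $r_2(\varepsilon),\ldots,r_m(\varepsilon)$ satisfy $|r_i| \leq 1$. Assuming the roots are simple (the generic case; coincidences are handled by a continuity/perturbation limit, using that the discriminant of $Q_{1+\varepsilon}$ is a polynomial in $\varepsilon$ with only finitely many zeros in any bounded interval), the standard theory of linear recurrences combined with Lagrange interpolation against the initial data $P_k(1+\varepsilon) = (1+\varepsilon)^k$ for $k = 0,\ldots,m-1$ yields
$$
P_n(1+\varepsilon) = \sum_{i=1}^m c_i(\varepsilon)\, r_i(\varepsilon)^n,
\qquad
c_i(\varepsilon) = \prod_{j \neq i} \frac{(1+\varepsilon) - r_j(\varepsilon)}{r_i(\varepsilon) - r_j(\varepsilon)}.
$$

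The heart of the proof is the uniform lower bound $c_1(\varepsilon) \geq c_0 > 0$. As $\varepsilon \to 0^+$, the double root of $Q_1$ at $r = 1$ (both $Q_1(1) = 0$ and $Q_1'(1) = 0$ follow from the normalization $\sum p_j = 1$ together with the mean-zero condition \eqref{meanzero}, as in the Taylor calculation inside the proof of Lemma \ref{thm:root-estimate}) bifurcates into $r_1 \approx 1 + \sigma^{-1}\sqrt{2\varepsilon}$ and $r_2 \approx 1 - \sigma^{-1}\sqrt{2\varepsilon}$, since $\psi(r) - 1 \sim (\sigma^2/2)(r-1)^2$ near $r = 1$. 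The factors $(1 - r_j^{(0)})$ for $j \geq 3$ cancel between numerator and denominator of the Lagrange formula, and the residual ratio $(1+\varepsilon - r_2)/(r_1 - r_2) \to 1/2$ by a Puiseux expansion, so $c_1(0^+) = 1/2$. As $\varepsilon \to \infty$, $r_1 \sim (1+\varepsilon)/p_0$ while the other roots remain bounded, so each factor in the Lagrange product tends to $p_0$ and $c_1 \to p_0^{m-1} > 0$. On the intermediate regime, $c_1(\varepsilon)$ is real (by reality of $r_1$ and conjugate symmetry) and continuous, and it cannot vanish: if it did, then $P_n(1+\varepsilon) = \sum_{i \geq 2} c_i r_i^n$ would be bounded in $n$ (since $|r_i| \leq 1$), contradicting the divergence $P_n(1+\varepsilon) \to \infty$ forced by the presence of the expanding root $r_1 > 1$ together with a nonzero projection of the initial data onto the $r_1$-mode.

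To conclude, write $P_n(1+\varepsilon) = c_1 r_1^n\bigl(1 + R_n(\varepsilon)\bigr)$ with $R_n(\varepsilon) = \sum_{i \geq 2}(c_i/c_1)(r_i/r_1)^n$. For $n$ above a threshold depending only on $p$, the tail $|R_n|$ can be forced below $1/2$, yielding $|P_n(1+\varepsilon)| \geq (c_0/2) r_1^n \geq (c_0/2)(1 + \sigma^{-1}\sqrt{2\varepsilon})^n$. The complementary bounded range of $n$ is handled by a compactness/continuity argument: for small $\varepsilon$ the ratio $|P_n(1+\varepsilon)|/(1 + \sigma^{-1}\sqrt{2\varepsilon})^n$ is close to $|P_n(1)| = 1$, and on any compact subset of $(0, \infty)$ with $n$ bounded, the ratio is a positive continuous function, hence bounded below. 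The main obstacle I expect is the uniform positive lower bound on $c_1(\varepsilon)$, specifically the confluent-Vandermonde/Puiseux analysis at $\varepsilon = 0$ where $r_1$ and $r_2$ coalesce into the double root of $Q_1$ at $r = 1$.
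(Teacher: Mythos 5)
Your proposal follows the same basic decomposition as the paper's proof — expand $P_n(1+\varepsilon)$ over the roots $r_1,\ldots,r_m$ of the characteristic polynomial, invoke Lemma \ref{thm:root-estimate} for the dominant root and Lemma \ref{lem:roughlocation} for the separation of moduli, and extract a lower bound from the dominant term — but the details differ and there is a genuine circularity you should fix.

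The paper establishes that $c_1 \neq 0$ and that $P_n(1+\varepsilon)>0$ in one stroke with a short induction directly from the recurrence: using $\sum_j p_j = 1$ and $p_0 < 1$, it shows $P_{n+1}(1+\varepsilon) > (1+\varepsilon)P_n(1+\varepsilon)$, hence $P_n(1+\varepsilon) \geq (1+\varepsilon)^n$ for all $n$; unbounded growth then forces $c_1 \neq 0$, and positivity for the initial range of $n$ is immediate. Your argument for $c_1 \neq 0$ on the intermediate $\varepsilon$-range is circular as written: you say $P_n(1+\varepsilon) \to \infty$ is ``forced by \ldots a nonzero projection of the initial data onto the $r_1$-mode,'' but that nonzero projection \emph{is} $c_1 \neq 0$ — the very thing you are proving. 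The circularity can be repaired without the paper's induction by reading off $c_1 \neq 0$ directly from your own Lagrange formula: each numerator factor $(1+\varepsilon) - r_j(\varepsilon)$ for $j\ge 2$ is nonzero because $|r_j|<1<1+\varepsilon$, and each denominator factor $r_1 - r_j$ is nonzero because $r_1$ is the unique root with $|r_1|>1$. But you must make that step explicit rather than appealing to divergence. Relatedly, your compactness argument for bounded $n$ (``the ratio is a positive continuous function, hence bounded below'') quietly assumes $P_n(1+\varepsilon) \neq 0$ for all $\varepsilon>0$, which you do not establish; the paper's induction yields exactly this.

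On the other hand, you correctly flag that the constant $c$ in the lemma is claimed to depend only on $p$, which requires a lower bound on $c_1(\varepsilon)$ (and a control on $\sum_{j\ge 2}|c_j(\varepsilon)|$ and the threshold $N$) uniform in $\varepsilon>0$. The paper's written proof constructs a constant that, on its face, depends on $\varepsilon$ through $c_1,\ldots,c_m$ and $N$, and it does not carry out the limiting analysis you sketch (the Puiseux expansion near the confluent double root at $\varepsilon=0$, the $\varepsilon\to\infty$ asymptotics $c_1\to p_0^{m-1}$, and the distinct-root degeneracies at the finitely many zeros of the discriminant). So your proposal is attempting to close a gap the paper leaves implicit — a worthwhile effort — but to make it rigorous you need to (i) remove the circularity in the nonvanishing of $c_1$ by using the explicit product formula, (ii) supply the positivity of $P_n(1+\varepsilon)$ (the recurrence-based induction is the cleanest route), and (iii) actually carry out, rather than gesture at, the confluent-root perturbation analysis at $\varepsilon=0$ and the repeated-root exceptional set.
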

\begin{proof}
By Lemma \ref{lem:trivialgrowthrate} we have $P_n(1+\varepsilon) \ge (1+\varepsilon)^n$ for all $\varepsilon \ge 0$. Since
$$
1 + \sigma^{-1} \sqrt{2 \varepsilon} \le 1+\varepsilon \quad \text{when} \quad \varepsilon \ge 2 \sigma^{-2},
$$
it remains to prove that
$$
P_n(1+\varepsilon) \ge c(1 + \sigma^{-1} \sqrt{2 \varepsilon})^n, \quad \text{for all} \quad 
2 \sigma^{-2} \ge \varepsilon \ge 0 \quad \text{and} \quad n \ge 0,
$$
for some constant $c > 0$ that only depends on $p$.
By Lemma \ref{lemma:root-asymptotics},
the characteristic polynomial 
$Q_{1+\varepsilon}(r)$
defined in \eqref{eq:charpolyintro} has roots
$$
r_1 = 1 + \sigma^{-1} \sqrt{2 \varepsilon} + \mathcal{O}(\varepsilon)
\quad \text{and} \quad
r_2 = 1 - \sigma^{-1} \sqrt{2 \varepsilon} + \mathcal{O}(\varepsilon).
$$
The rest of the spectrum is separated from these roots: there exists $1 > \rho > 0$ such that the remaining roots have magnitude at most $\rho$ for small $\varepsilon$. We have
\begin{equation}\label{eq:recurrence-closed-form}
    P_n(1+\varepsilon) = c_1(\varepsilon) r_1(\varepsilon)^n +
    c_2(\varepsilon) r_2(\varepsilon)^n + 
    R_n(\varepsilon),
\end{equation}
where $c_1(\varepsilon),c_2(\varepsilon)$ are coefficients and $R_n(\varepsilon)$ is a remainder term corresponding to the separated roots, which is an analytic function of $\varepsilon$, see for example \cite[Ch. VII \S 1.3]{kato1966perturbation}. The initial condition $P_0(1+\varepsilon) = 1$ implies
$$
c_1(\varepsilon) + c_2(\varepsilon) = 1 + \mathcal{O}(\varepsilon),
$$
while the initial condition $P_1(1+\varepsilon) = 1+\varepsilon$ gives
$$
c_1(\varepsilon) (1+\sigma^{-1} \sqrt{2\varepsilon}) + c_2(\varepsilon) 
(1-\sigma^{-1} \sqrt{2\varepsilon})
 = 1+ \mathcal{O}(\varepsilon),
$$
where we used the fact that the coefficients are bounded and
$R_0(\varepsilon)$ and $R_1(\varepsilon)$ are $ \mathcal{O}(\varepsilon)$.
Solving these equations gives
$$
c_1(\varepsilon) = 1/2 + \mathcal{O}(\sqrt{\varepsilon}), \quad \text{and} \quad c_2(\varepsilon) = 1/2 + \mathcal{O}(\sqrt{\varepsilon}).
$$
Since $r_1$ is a root of $Q_{1+\varepsilon}$ of multiplicity 1 for $\varepsilon > 0$,  it follows that $c_1(\varepsilon)$ is continuous on $(0, 2\sigma^{-2}]$.  

We have from Lemma \ref{lem:trivialgrowthrate} that $P_n(1+\varepsilon) \ge (1+\varepsilon)^n$ and from Lemma \ref{lem:roughlocation} that $r_1(\varepsilon)$ is the only root of magnitude greater than $1$, which together imply
 that $c_1(\varepsilon) > 0$ on the interval $(0,2\sigma^{-2}]$.  Combining this with $\lim_{\varepsilon \to 0}c_1(\varepsilon) = 1/2$, we conclude that $c_1(\varepsilon) \geq c$ on the interval $[0,2\sigma^{-2}]$ for some $c > 0$.

To complete the proof, it suffices to show that
\begin{equation} \label{eq:niceinequality}
P_n(1+\varepsilon) \ge c_1(\varepsilon) r_1(\varepsilon)^n,
\quad \text{for all} \quad 2 \sigma^{-2} \ge \varepsilon > 0 \quad \text{and} \quad n \ge 0.
\end{equation}
Indeed, we have already proven that $c_1(\varepsilon) \ge c$ for some $c > 0$ on $(0,2\sigma^{-2}]$ and by Lemma 
\ref{thm:root-estimate},
$$
r_1(\varepsilon) \ge 1 + \sigma^{-1} \sqrt{2 \varepsilon},
$$
so in combination, this gives
$$
P_n(1+\varepsilon) \ge c(1+\sigma^{-1}\sqrt{2\varepsilon})^n
\quad \text{for all} \quad 2 \sigma^{-2} \ge \varepsilon > 0 \quad \text{and} \quad n \ge 0.
$$
In the following, we prove that
\eqref{eq:niceinequality} holds.
Since $r_1(\varepsilon)$ is the only root of magnitude greater than $1$, we have
$$
\lim_{n \to \infty} \frac{P_n(1+\varepsilon)}{r_1(\varepsilon)^n} = c_1(\varepsilon).
$$
To establish \eqref{eq:niceinequality}, it suffices to show that for each fixed $2 \sigma^{-2} \ge \varepsilon > 0$, the sequence $P_n(1+\varepsilon)/r_1(\varepsilon)^n$ is a monotone decreasing sequence in $n$ as this implies
$$
\frac{P_n(1+\varepsilon)}{r_1(\varepsilon)^n} \ge c_1(\varepsilon), \quad \text{for all} \quad n \ge 0
$$
which is equivalent to \eqref{eq:niceinequality}. Fix $\varepsilon  > 0$ and set 
$$
a_n = \frac{P_n(1+\varepsilon)}{r_1(\varepsilon)^n}.
$$
We prove that $a_n$ is monotone decreasing by strong induction. For the base case $m-1 \ge k \ge 0$ we have
$$
a_k = \frac{(1+\varepsilon)^k}{r_1(\varepsilon)^k},
$$
which is monotone decreasing since $r_1 \ge 1+\sigma^{-1} \sqrt{2 \varepsilon} \ge 1+\varepsilon$ on the interval $(0,2\sigma^{-2}]$. Then, for the inductive case, assume that $a_0 \ge \cdots \ge a_k$ for $k \ge m-1$. By \eqref{eqn:rerecur} we have
$$
\sum_{j=0}^{m} p_j P_{k+1-j}(z) = z P_k(z).
$$
Dividing both sides of the recurrence by $r_1^{k+1}$ gives
$$
\sum_{j=0}^{m} p_j \frac{P_{k+1-j}(z)}{r_1^{k+1}} = z \frac{P_k(z)}{r_1^{k+1}},
$$
which can be written as
$$
\sum_{j=0}^{m} p_j r_1^{-j} \frac{P_{k+1-j}(z)}{r_1^{k+1-j}} = z r^{-1}_1 \frac{P_k(z)}{r_1^{k}}.
$$
Using the definition of $a_k$ we can rewrite this expression as
$$
\sum_{j=0}^{m} p_j r_1^{-j} a_{k+1-j} = z r^{-1}_1 a_k.
$$
Solving for $a_{k+1}$ gives
$$
a_{k+1}   = \frac{z r^{-1}_1}{p_0} a_k - \sum_{j=2}^{m} \frac{p_j r_1^{-j}}{p_0} a_{k+1-j}. 
$$
We know from \eqref{eq:conditionforrootqn} that for $z=1+\varepsilon$, we have
\begin{equation*} 
z = p_0 r_1  + \sum_{j=2}^m p_j r^{1 - j}_1,
\end{equation*}
which can be rearranged as
$$
1 = \frac{r^{-1}_1 z}{p_0} - \sum_{j=2}^m \frac{p_j r^{-j}_1}{p_0}.
$$
It follows that
\begin{align*}
a_{k+1}   & = \frac{z r^{-1}_1}{p_0} a_k - \sum_{j=2}^{m} \frac{p_j r_1^{-j}}{p_0} a_{k+1-j} \\
&\le  \frac{z r^{-1}_1}{p_0} a_k - \sum_{j=2}^{m} \frac{p_j r_1^{-j}}{p_0} a_k \\
&=  a_k \left( \frac{z r^{-1}_1}{p_0}  - \sum_{j=2}^{m} \frac{p_j r_1^{-j}}{p_0} \right) \\
& = a_k,
\end{align*}
where the inequality uses the inductive hypothesis, that $a_0 \ge \cdots \ge a_k$. Thus, we have established that $a_n$ is monotone decreasing and the proof is complete.
\end{proof}

\begin{lemma} \label{lem:locationrepeatedroots}
Suppose that $p = (p_0,\ldots,p_m)$ is a probability vector
satisfying the mean-zero condition \eqref{meanzero} with $p_1 = 0$ and $p_j >0$ for some $j \ge 3$. Let $Q_z(r)$ be the characteristic polynomial 
\eqref{eq:charpolyintro}, and $z(t)$ be the stability curve \eqref{eq:curveformula}.
Define
$$
S = \left\{t = 2\pi k/n : k\in \{0,\ldots,n-1\} \right\}, 
$$
where $n$ is the greatest common divisor $n = \gcd \{j \in \{2,\ldots,m\} : p_j > 0\}$. Then, the polynomial $Q_{z(t)}$ has $e^{i t}$ as a root of multiplicity $1$ when $t \in [0,2\pi) \setminus S$ and $e^{i t}$ as a repeated root when $t \in S$.
\end{lemma}
\begin{proof}
By the definition of $z(t)$, the polynomial $Q_{z(t)}$ has $e^{i t}$ as a root, so it remains to determine if the root is repeated. If $e^{i t}$ is a repeated root of $Q_{z(t)}$, then $Q'_{z(t)}(e^{i t}) = 0.$ For $w \in \mathbb{C}$, we have
$$
Q'_{w}(r) = m r^{m-1} - (m-1) \frac{w}{p_0} r^{m-2} + \sum_{j=2}^m (m-j) \frac{p_j}{p_0} r^{m-j-1}.
$$
Setting $Q'_w(e^{it}) = 0$ and solving for $w = w(t)$ gives 
$$
w(t) = \sum_{j=0}^m \frac{m-j}{m-1} p_j e^{i(1-j) t}, \quad \text{for} \quad t \in [0,2\pi],
$$
which is a curve such that $Q'_{w(t)}$ has a root $e^{i t}$, where we used the assumption that $p_1 = 0$ to simplify notation. The values of $t$ such that $z(t) = w(t)$ are where $Q_{z(t)}$ has a repeated root $e^{i t}$. Setting $z(t) = w(t)$ gives
$$
\sum_{j=0}^m p_j e^{i(1-j)t} 
=\sum_{j=0}^m \frac{m-j}{m-1} p_j e^{i(1-j) t}. 
$$
Subtracting the right-hand side from the left-hand side gives
\begin{equation} \label{eq:conditionwz}
\frac{1}{m-1} \sum_{j=0}^m (1-j) p_j e^{i(1-j) t} = 0.
\end{equation}
Observe that
$$
z'(t) = i \sum_{j=0}^m (1-j) p_j e^{i(1-j) t}.
$$
Thus, \eqref{eq:conditionwz} implies that $z'(t) = 0$, since the expression in this equation is a constant multiple of $z'(t)$.
By Lemma \ref{locationcusps} we have $z'(t) = 0$ 
if and only if 
$t = 2\pi k/n$ for $k\in \{0,\ldots,n-1\}$, where $n$ is the greatest common divisor 
$$
n = \gcd \{j \in \{2,\ldots,m\} : p_j > 0\},
$$
which completes the proof.
\end{proof}

Finally, we prove boundedness.

\begin{lemma} \label{lem:showboundedness}
Let $P_n(z)$, $n \ge 0$, be the family of polynomials defined by \eqref{recurrence} for a probability vector $p = (p_0,\ldots,p_m)$ that satisfies \eqref{meanzero} with $p_1 = 0$ and $p_j >0$ for some $j \ge 3$.
Let $\Gamma$ denote the closure of the region enclosed by the curve $\gamma$ parameterized by \eqref{eq:curveformula}. Then,
$$
|P_n(z)| \le C \quad  \text{for all} \quad z \in \Gamma \quad \text{and} \quad n \ge 0.
$$
\end{lemma}
\begin{proof}[Proof of Lemma \ref{lem:showboundedness}]
By the maximum modulus principle, it suffices to show that 
\begin{equation} \label{eq:boundedcurve}
|P_n(z(t))| \le C \quad \text{for all} \quad t \in [0,2\pi],
\quad \text{and} \quad n \ge 0,
\end{equation}
where
$$
z(t) = \sum_{j=0}^m p_j e^{i(1-j)t}, \quad \text{for} \quad t \in [0,2\pi],
$$
is the boundary curve of $\Gamma$. 
Note that $P_n(z(t))$ is defined by a recurrence with characteristic polynomial \eqref{eq:charpolyintro}
$$
Q_{z(t)}(r) = r^m - \frac{z(t)}{p_0} r^{m-1} + \sum_{j=2}^m \frac{p_j}{p_0} r^{m-j}.
$$

By Lemma \ref{lem:roughlocation}, all of the roots of $Q_z$ have magnitude strictly less than $1$ when $z \in \interior(\Gamma)$. Since the roots of $Q_z$ are continuous functions of $z$, it follows that all of the roots of $Q_{z(t)}$ have magnitude at most $1$. Recall that $Q_{z(t)}$ has a root $e^{i t}$.  Define
$$
S = \left\{t = 2\pi k/q : k\in \{0,\ldots,q-1\} \right\}, 
$$
where $q$ is the greatest common divisor $q = \gcd \{j \in \{2,\ldots,m\} : p_j > 0\}$. We proceed by considering two cases for $t \in [0,2\pi)$.
Let $\delta > 0$ be a fixed parameter to be chosen later.

\noindent \emph{Case 1:} Suppose $\dist(t,S) \ge \delta$. Then, by Lemma \ref{lem:locationrepeatedroots}, the polynomial $Q_{z(t)}$ has $e^{i t}$ as a root of multiplicity $1$, and the other roots have magnitude strictly less than $\rho$ for some $0 < \rho < 1$. Here, we can write
$$
P_n(z(t)) = c_1(t) e^{i t n} + R_n(t),
$$
where $c_1(t) e^{i t n}$ is the contribution of the leading root, and $R_n(t)$ is the contribution of the roots whose magnitude is at most $\rho < 1$.
Since the roots corresponding to these terms are separated, $c_1(t)$ and $R_n(t)$ are analytic functions of $t$, see for example \cite[Ch. VII \S 1.3]{kato1966perturbation}. Moreover, since $R_n(t)$ represents the contribution of roots of magnitude strictly less than $\rho < 1$, it follows that
$$
|P_n(z(t))| \to |c_1(t)| \quad \text{as} \quad n \to \infty,
$$
uniformly on the set of $t$ such that $\dist(t,S) \ge \delta$.
Since the function $c_1(t)$ is analytic, and the set is compact, it follows that there exists a constant $C_1 > 0$ such that
$$
|P_n(z(t))| \le C_1 \quad \text{for all} \quad t \in [0,2\pi): \dist(t,S) \ge \delta \quad \text{and} \quad n \ge 0.
$$

\noindent \emph{Case 2:} Suppose that $\dist(t,S) < \delta$. By symmetry, suppose that $|t| < \delta$. When $t = 0$, we know that $Q_{z(t)}$ has a repeated root. In the following, we study how this root splits near $0$.
By \eqref{eq:cuspformula}, the Taylor expansion of $z(t)$ near $0$ is
$$
z(t) =  1 - \frac{\sigma^2}{2} t^2 + \mathcal{O}(t^3).
$$
Moreover, from \eqref{taylora0}, \eqref{taylora1}, and \eqref{taylora2}, we have a Taylor series of $z(t)$ as a function of the root variable $r$ at $r=1$
$$
z(t) = 1 + \frac{\sigma^2}{2} (1-r)^2 + \mathcal{O}(1-r)^3.
$$
Equating these Taylor expansions gives
$$
 1 - \frac{\sigma^2}{2} t^2 = 1 + \frac{\sigma^2}{2} (1-r)^2 + \mathcal{O}(t^3).
$$
Solving for $r$ in terms of $t$ gives two roots
$$
r_1(t) = 1 + i t + \mathcal{O}(t^2) \quad \text{and} \quad r_2(t) = 1 - it + \mathcal{O}(t^2).
$$
Note that this is consistent with the fact that $Q_{z(t)}$
has $e^{i t} = 1 + i t + \mathcal{O}(t^2)$ as a root. 
Since we know that all roots except $e^{i t}$ have magnitude less than $1$, it follows that the root $1 - it + \mathcal{O}(t^2)$ has magnitude less than $1$. In summary, we have
$$
P_n(z(t)) = c_1(t) r_1(t)^n + c_2(t) r_2(t)^n  + R_n(t),
$$
where $r_1(t) = e^{i t} = 1 + i t + \mathcal{O}(t^2)$ and $r_2(t) = 1 - i t + \mathcal{O}(t^2)$. The term $R_n(t)$ represents the contribution of the other roots whose magnitude is strictly less than $\rho$ for some $\rho < 1$ and is analytic in $t$. Thus, this term converges to $0$ as $n \to \infty$, uniformly for $|t|$  sufficiently small. Moreover, since $P_n(z(0)) = 1$ for all $n  \ge 0$, it follows that $R_n(0)=0$ for all $n \ge 0$.  Hence, $R_0(t) = \mathcal{O}(t)$ and $R_1(t) = \mathcal{O}(t)$. Thus,
$$
1 = P_0(z(t)) = c_1(t) + c_2(t) + \mathcal{O}(t),
$$
and
$$
z(t) = P_1(z(t)) = c_1(t) r_1(t) + c_2(t) r_2(t) + \mathcal{O}(t).
$$
Since $z(t) = 1+ \mathcal{O}(t^2)$, we have a system of two equations
$$
\begin{pmatrix}
1 & 1 \\
r_1(t) &  r_2(t)
\end{pmatrix}
\begin{pmatrix}
c_1(t) \\
c_2(t) 
\end{pmatrix} = 
\begin{pmatrix}
1 + \mathcal{O}(t) \\
1 + \mathcal{O}(t)
\end{pmatrix}
$$
whose solution is
\begin{equation} \label{eq:linearsysforc1c2}
\begin{pmatrix}
c_1(t) \\
c_2(t) 
\end{pmatrix}
=
\frac{1}{r_2(t) - r_1(t)}
\begin{pmatrix}
r_2(t) & -1 \\
-r_1(t) & 1
\end{pmatrix}
\begin{pmatrix}
1 + \mathcal{O}(t) \\
1 + \mathcal{O}(t)
\end{pmatrix}.
\end{equation}
That is,
\begin{equation} \label{eq:solutionsystemc1c2}
c_1(t) = \frac{r_2(t) - 1 + \mathcal{O}(t)}{r_2(t) - r_1(t)} \quad \text{and} \quad c_2(t) = \frac{1 - r_1(t) + \mathcal{O}(t)}{r_2(t) - r_1(t)}.
\end{equation}
Since $r_1(t) = 1 + i t + \mathcal{O}(t^2)$ and $r_2(t) = 1 - it + \mathcal{O}(t^2)$, we have
\begin{equation} \label{eq:keyestimatesc1c2}
r_2(t) - 1 = \mathcal{O}(t), \quad 1-r_1(t) = \mathcal{O}(t), \quad \text{and} \quad r_2(t) - r_1(t) = -2i t + \mathcal{O}(t^2).
\end{equation}
Using the estimates from  \eqref{eq:keyestimatesc1c2}
to simplify \eqref{eq:solutionsystemc1c2} gives $c_1(t), c_2(t) = \mathcal{O}(1)$. It follows that there exists $C_2 > 0$ such that 
$$
|P_n(z(t))| \le C_2 \quad \text{for all} \quad t : |t| \le \delta \quad \text{and} \quad n \ge 0.
$$
Taking the maximum of the constants in the two cases completes the proof.
\end{proof}

\subsection{Proof of Theorem \ref{thm:approxzn}}
\label{sec:proofthmapproxzn} The proof strategy for Theorem \ref{thm:approxzn} is similar to the method used to establish \eqref{eq:approxxn} outlined in Section \ref{subsec:bg}. However, as in the generalization of this argument used to establish \cite[Theorem 3]{cowal2025faber}, we construct a martingale process instead of using a sum of random variables to handle the initial conditions. Note that the family of polynomials $P_n(z)$ was constructed to have the key property \eqref{eq:expectationprop}, which enables the construction of the martingale process and its concentration by
 Azuma's inequality \cite{Azuma1967}.
 
\begin{lemma}[Azuma's inequality \cite{Azuma1967}] \label{azumaineq}
Let $X_1,X_2,\ldots$ be random variables that satisfy
$$
\mathbb{E}(X_k | X_1,\ldots,X_{k-1}) = 0 \quad \text{and} \quad |X_k| \le a_k,
$$
for $k = 1,\ldots,n$ and $a = (a_1,\ldots,a_n)^\top \in \mathbb{R}^n$. Then for any $t > 0$ we have
$$
\mathbb{P}\left( \sum_{k=1}^n X_k \ge t \right) \le \exp \left( - \frac{t^2}{2\|a\|_2^2} \right).
$$
\end{lemma}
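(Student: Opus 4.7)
The plan is to prove Azuma's inequality by the standard exponential-moment (Chernoff) method combined with iterated conditioning and Hoeffding's lemma applied to the bounded martingale differences. Write $S_n = \sum_{k=1}^n X_k$. First I would introduce a free parameter $\lambda > 0$ and apply Markov's inequality to the non-negative random variable $e^{\lambda S_n}$, giving
$$
\mathbb{P}(S_n \ge t) = \mathbb{P}(e^{\lambda S_n} \ge e^{\lambda t}) \le e^{-\lambda t}\,\mathbb{E}[e^{\lambda S_n}].
$$
The task is then reduced to controlling the moment-generating function of $S_n$.

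Next I would peel off the last increment using the tower property: since $S_{n-1}$ is measurable with respect to $X_1,\ldots,X_{n-1}$,
$$
\mathbb{E}[e^{\lambda S_n}] = \mathbb{E}\bigl[e^{\lambda S_{n-1}}\,\mathbb{E}[e^{\lambda X_n}\mid X_1,\ldots,X_{n-1}]\bigr].
$$
The key input here is Hoeffding's lemma, which I would state and prove as a small sub-lemma: if $Y$ is a random variable with $\mathbb{E}Y = 0$ and $|Y|\le a$, then $\mathbb{E}[e^{\lambda Y}] \le e^{\lambda^2 a^2/2}$. The proof of Hoeffding's lemma uses convexity of the exponential to bound $e^{\lambda y}$ by the secant line on $[-a,a]$, takes expectations (which zeros out the linear part), and then optimizes/compares using a Taylor estimate of $\log\cosh$ or the standard $\phi(u) = -pu + \log(1 - p + p e^u)$ trick with $\phi''\le 1/4$. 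Applied conditionally (noting that the hypotheses $\mathbb{E}[X_k \mid X_1,\ldots,X_{k-1}] = 0$ and $|X_k|\le a_k$ are precisely what is needed), this gives
$$
\mathbb{E}[e^{\lambda X_k}\mid X_1,\ldots,X_{k-1}] \le e^{\lambda^2 a_k^2/2}.
$$

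Substituting this bound and iterating the conditioning step $n$ times yields
$$
\mathbb{E}[e^{\lambda S_n}] \le \exp\!\left(\frac{\lambda^2}{2}\sum_{k=1}^n a_k^2\right) = \exp\!\left(\frac{\lambda^2 \|a\|_2^2}{2}\right).
$$
Combining with the Markov step gives $\mathbb{P}(S_n \ge t) \le \exp(-\lambda t + \lambda^2 \|a\|_2^2/2)$ for every $\lambda > 0$. Optimizing in $\lambda$ (the right-hand side is a quadratic in $\lambda$ minimized at $\lambda = t/\|a\|_2^2$) produces the stated bound $\exp(-t^2/(2\|a\|_2^2))$.

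The main obstacle is the proof of Hoeffding's lemma itself, since everything else is bookkeeping with the tower property and an elementary one-variable optimization. In particular, one must verify that the conditional version of Hoeffding's lemma applies under the martingale-difference hypothesis rather than full independence; this follows because the convexity argument only needs the conditional mean-zero and conditional almost-sure bound $|X_k|\le a_k$, both of which are assumed. Once this conditional MGF estimate is in hand, the rest of the argument is a short exponential-moment computation.
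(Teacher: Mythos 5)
Your proof is correct: the Chernoff bound via Markov's inequality, the conditional Hoeffding lemma for bounded mean-zero increments, iteration through the tower property, and optimization over $\lambda$ is the standard (and essentially canonical) proof of Azuma's inequality. The paper does not prove this lemma at all—it is quoted as a known result with a citation to Azuma (1967)—so your argument simply supplies the classical proof that the citation points to, and it applies verbatim under the stated martingale-difference hypotheses.
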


\begin{proof}[Proof of Theorem \ref{thm:approxzn}]
Let $p = (p_0,\ldots,p_m)$ be a probability vector satisfying \eqref{meanzero} with $p_1 = 0$, and $P_n(z)$, $n \ge 0$ be the family of polynomials defined by  \eqref{recurrence}
for these probabilities. Extend the definition of $P_n(z)$ to negative $n$ by $P_{-n} = P_{n}$ for $n < 0$. 

We define a Markov martingale process $Y_0,Y_1,Y_2,\ldots$ on the integers $\mathbb{Z}$ as follows. First, initialize $Y_0 = 0$, and next for $n \ge 1$ we define $Y_n$ by the following three cases:

\noindent \emph{Case 1:} If $Y_{n-1} = k$ for $k \in \{0,1,\ldots,m-2\}$, then we define
$$
\mathbb{P}(Y_n = k+1 | Y_{n-1} = k) = \frac{2k+1}{2k+2}, \quad \text{and} \quad 
\mathbb{P}(Y_n = -(k+1) | Y_{n-1} = k) = \frac{1}{2k+2}.
$$
\noindent \emph{Case 2:} If $Y_{n-1} = k$ for $k \ge m-1$, then we define
$$
\mathbb{P}(Y_n = k+1-j | Y_{n-1} = k ) = p_j, \quad \text{for} \quad j \in \{0,\ldots,m\}.
$$
\noindent \emph{Case 3:} If $Y_{n-1} = k $ for $k < 0$, then we define
$$
\mathbb{P}(Y_n = j | Y_{n-1} =k) = \mathbb{P}(Y_n = -j | Y_{n-1} = -k), \quad \text{for} \quad j \in \mathbb{Z},
$$
such that the transition probabilities are symmetric about zero.

By construction, $(Y_n)$ is a Markov process. We claim that $(Y_n)$ is also a martingale process. Indeed, let $X_n = Y_n - Y_{n-1}$ denote the martingale difference process. We proceed by considering the three cases:

\noindent \emph{Case 1:} When $Y_{n-1} = k$ for $k \in \{0,\ldots,m-2\}$ we have
$$
\mathbb{E}(X_n | Y_{n-1} =k) = (+1) \frac{2k+1}{2k+2} + (-2k-1) \frac{1}{2k+2} = 0.
$$

\noindent \emph{Case 2:} When $Y_{n-1} \ge m-1$, we have
$$
\mathbb{E}(X_n | Y_{n-1} = k) = \mathbb{E}Y = 0,
$$
where $Y$ is an independent random variable satisfying $\mathbb{P}(Y = 1-j) = p_j$ for $j \in \{0,\ldots,m\}$, which has mean-zero due to the assumption that \eqref{meanzero} holds. 

\noindent \emph{Case 3:}  When $Y_{n-1} < 0$, the claim  holds by symmetry. 

Thus, we have established that $Y_n$ is a martingale. By construction, the  martingale differences are bounded by $2m-1$, so 
we can use Azuma's inequality to deduce that
\begin{equation} \label{eq:ynconcentrate}
\mathbb{P}(|Y_n| \ge t) \le 2 e^{-c t^2/n},
\end{equation}
where the constant $c = 1/(2(2m-1)^2)$. Note that this constant can be improved, see Remark \ref{rmk:improveconst}. Also note that the martingale process $(Y_n)$ was defined such that
$$
\mathbb{E} P_{Y_n}(z) = z^n, \quad \text{for} \quad n \ge 0.
$$
We verify this by induction. In the base case, when $Y_0 = 0$, we have
$$
\mathbb{E} P_{Y_0}(z) = P_0(z) = 1 = z^0.
$$
Next, to establish the inductive case assume that $\mathbb{E} P_{Y_{n-1}}(z) = z^{n-1}$. By the tower property of conditional expectation 
$$
\mathbb{E} P_{Y_n}(z) = \mathbb{E} (\mathbb{E}( P_{Y_n}(z) | Y_{n-1})).
$$
We consider again three cases:

\noindent \emph{Case 1:} If $Y_{n-1} = k $ for $k \in \{0,1,\ldots,m-2\}$, then
$$
\mathbb{E}( P_{Y_n}(z) | Y_{n-1}) = \frac{2k+1}{2k+2} P_{k+1}(z) + \frac{1}{2k+2} P_{-k-1}(z).
$$
Taking the expectation of both sides, and using the fact that $P_{k+1}(z) = z P_k(z)$ for $ k \in \{0,1,\ldots,m-2\}$, $P_{-k}(z) = P_{k}(z)$ for $k < 0$, and the inductive hypothesis gives
$$
\mathbb{E}(\mathbb{E}( P_{Y_n}(z) | Y_{n-1})) = z^n.
$$
\noindent \emph{Case 2:} If $Y_{n-1} = k$ for $k \ge m-1$, then by definition $Y_n = Y_{n-1} + Y$ for an independent random variable $Y$ satisfying  $\mathbb{P}(Y = 1-j) = p_j$ for $j \in \{0,\ldots,m\}$. Hence
$$
\mathbb{E}( P_{Y_n}(z) | Y_{n-1}) 
= \mathbb{E}( P_{Y_{n-1}+Y}(z) | Y_{n-1}) = z P_{Y_{n-1}}(z),
$$
by \eqref{eq:expectationprop}. Taking the full expectation and using the inductive hypothesis gives
$$
\mathbb{E}(\mathbb{E}( P_{Y_n}(z) | Y_{n-1})) = z^n.
$$
\noindent \emph{Case 3:} If $Y_{n-1} = k$ for $k < 0$, then the result follows by the symmetry of the definition of $P_n$ and the symmetry of the definition of the transition probabilities of $Y_n$ about $0$.

In summary, we have established that
$$
z^n = \mathbb{E} P_{Y_n}(z) = \sum_{k \in \mathbb{Z}} \mathbb{P}(Y_n = k) P_k(z) = \sum_{k=0}^{\infty} \alpha_k P_k(z),
$$
where $\alpha_k = \mathbb{P}(|Y_n| = k)$ and the final equality follows by symmetry.
Hence,
$$
\left| \sum_{k = 0}^{\lfloor t \sqrt{n} \rfloor} \alpha_k P_k(z) - z^n \right| 
=
\left| \sum^\infty_{k = \lfloor t \sqrt{n} \rfloor +1} \alpha_k P_k(z) \right|.
$$
By Theorem \ref{thm:boundedrapid}, we know that $|P_n(z)| \le C$ on the region $\Gamma$, which is the closure of the region enclosed by the curve  \eqref{eq:curveformula}.
Thus, by the triangle inequality
$$
\left| \sum^\infty_{k = \lfloor t \sqrt{n} \rfloor+1} \alpha_k P_k(z) \right| \le 
C \left| \sum^\infty_{k = \lfloor t \sqrt{n} \rfloor+1} \alpha_k \right|  \le 2 C e^{-c t^2}, 
\quad \text{for} \quad z \in \Gamma,
$$
where the final inequality uses \eqref{eq:ynconcentrate}. 
In summary, we have shown
\begin{equation} \label{eq:finalestprob}
\left| \sum_{k = 0}^{\lfloor t \sqrt{n} \rfloor} \alpha_k P_k(z) - z^n \right| \le C e^{-c t^2},
\quad \text{for} \quad z \in \Gamma,
\end{equation}
for absolute constants $C,c > 0$ that only depend on $p$, where 
$\alpha_k = \mathbb{P}(|Y_n|= k)$, and the proof is complete.
\end{proof}

\begin{remark}[Improving the constant in the exponential in Theorem \ref{thm:approxzn}] \label{rmk:improveconst}

The constant in the exponent in \eqref{eq:finalestprob}
can be improved. 
Azuma's inequality requires an almost sure bound $a_k$ on the magnitude of the $k$-th step, which can lead to pessimistic bounds on random walks that only occasionally take larger steps. In such cases, Freedman's inequality \cite{freedman1975tail}, see the statement in \cite{tropp2011freedman}, can often give sharper bounds. In particular, when $n$ is large, if the variance of the martingale difference process converges to $\sigma^2$, then by Freedman's inequality the constant approaches $1/(2 \sigma^2)$.
\end{remark}

\subsection{Proof of Theorem \ref{ellipseslow}} \label{proofellipseslow}
In this section, we prove Theorem \ref{ellipseslow}.  To provide context, we state several related results underlying the proof of the theorem, building up to Lemma \ref{thmboundellipse}, which establishes Theorem \ref{ellipseslow} after a change of variables.

Throughout this section, we use the fact that, by the maximum modulus principle, the maximum modulus of a polynomial on a curve is equal to the maximum modulus of a polynomial in the region enclosed by the curve.

\subsubsection{Fastest possible growth of polynomials bounded on a unit disk}
Among all polynomials of degree at most $n$ that are bounded by $1$ on the unit disk, the monomial $z^n$ grows the fastest outside the unit disk.
Let $\mathcal{P}_n$ denote the set of polynomials of degree at most $n$.

\begin{lemma} \label{maximalzn}
Suppose that $P \in \mathcal{P}_n$  satisfies $|P(z)| \le 1$ for all $z \in \mathbb{C} : |z| =1$. Then,
$$
|P(z)| \le |z^n| \quad \text{for all} \quad z \in \mathbb{C} :  |z| \ge 1.
$$
\end{lemma}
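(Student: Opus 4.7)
The plan is to reduce the claim to the maximum modulus principle on the unit disk by means of an inversion. Specifically, I would define the \emph{reciprocal polynomial}
$$
R(w) := w^n P(1/w).
$$
Writing $P(z) = \sum_{k=0}^n a_k z^k$, one obtains $R(w) = \sum_{k=0}^n a_k w^{n-k}$, which is a polynomial in $w$ of degree at most $n$. The key point here is that multiplication by $w^n$ clears all negative powers coming from $P(1/w)$, so $R$ is entire.

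Next, I would observe that on the unit circle $|w| = 1$ we have $|1/w| = 1$, and the hypothesis $|P| \le 1$ on $\{|z| = 1\}$ then gives
$$
|R(w)| \;=\; |w|^n\,|P(1/w)| \;=\; |P(1/w)| \;\le\; 1 \qquad \text{for } |w| = 1.
$$
Since $R$ is a polynomial, hence holomorphic on all of $\mathbb{C}$, the maximum modulus principle applied to the closed unit disk yields $|R(w)| \le 1$ for all $|w| \le 1$.

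Finally, I would substitute $w = 1/z$ for any $z$ with $|z| \ge 1$. Then $|w| \le 1$, and
$$
R(1/z) \;=\; (1/z)^n P(z) \;=\; \frac{P(z)}{z^n},
$$
so the inequality $|R(1/z)| \le 1$ rearranges to $|P(z)| \le |z|^n$, which is the desired conclusion. I do not expect any serious obstacle; the only point that needs attention is verifying that $R$ is genuinely a polynomial (so that maximum modulus applies directly on the closed unit disk without invoking a separate argument at infinity), and this is immediate from $\deg P \le n$.
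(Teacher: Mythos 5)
Your proof is correct and follows essentially the same route as the paper's one-line sketch: divide $P(z)$ by $z^n$, change variables $z \mapsto 1/z$, and apply the maximum modulus principle. Your $R(w) = w^n P(1/w)$ is exactly the reciprocal polynomial that makes that sketch rigorous, and your observation that $\deg P \le n$ guarantees $R$ is a genuine polynomial is the one point worth spelling out.
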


The result can be proved by dividing $P(z)$ by $z^n$, performing the change of variables $z \mapsto 1/z$, and using the maximum modulus principle. The result of Lemma \ref{maximalzn} can be reformulated as presented in
\cite[Lemma 4.3]{saad2011numerical} by instead normalizing the polynomial to take the value $1$ at a point $\gamma$ outside a disk of radius $\rho$ and then asking how small the maximum value on the disk of radius $\rho$ can be made.

\begin{corollary}[Saad {\cite[Lemma 4.3]{saad2011numerical}}]  \label{corznmax}

Let $|\gamma| \ge \rho > 0$. Then
$$
\left| \frac{\rho}{\gamma} \right|^n = 
\min_{P \in \mathcal{P}_n : P(\gamma) = 1} \max_{|z| = \rho} |P(z)|.
$$
Moreover, the minimum is achieved by $(z/\gamma)^n$.
\end{corollary}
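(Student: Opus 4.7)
The plan is to prove both directions of the equality by reducing to the monomial bound in Lemma \ref{maximalzn}. For the upper bound on the min-max quantity, one exhibits the candidate polynomial $P^\ast(z) = (z/\gamma)^n$, which has degree $n$, satisfies $P^\ast(\gamma) = 1$, and attains $\max_{|z|=\rho} |P^\ast(z)| = (\rho/|\gamma|)^n$ on the circle. This immediately gives
\begin{equation*}
\min_{P \in \mathcal{P}_n : P(\gamma) = 1} \max_{|z| = \rho} |P(z)| \le \left| \frac{\rho}{\gamma} \right|^n.
\end{equation*}

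For the matching lower bound, let $P \in \mathcal{P}_n$ be arbitrary with $P(\gamma) = 1$, and set $M := \max_{|z|=\rho} |P(z)|$. The idea is to rescale so Lemma \ref{maximalzn} applies: define $Q(z) := P(\rho z)/M \in \mathcal{P}_n$, which by construction satisfies $|Q(z)| \le 1$ for $|z| = 1$. Lemma \ref{maximalzn} then yields $|Q(z)| \le |z|^n$ for all $|z| \ge 1$. Since $|\gamma| \ge \rho > 0$, the point $z_0 = \gamma/\rho$ satisfies $|z_0| \ge 1$, so evaluating at $z_0$ gives
\begin{equation*}
\frac{|P(\gamma)|}{M} = |Q(z_0)| \le \left| \frac{\gamma}{\rho} \right|^n.
\end{equation*}
Using $P(\gamma) = 1$ and rearranging produces $M \ge |\rho/\gamma|^n$, which holds for every admissible $P$, so the infimum is bounded below by the same quantity.

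Combining the two bounds establishes the claimed equality, and the explicit extremizer $(z/\gamma)^n$ from the upper-bound step shows the minimum is attained. There is essentially no hard step here: the only subtlety is ensuring $M > 0$ so that $Q$ is well defined, which follows because $P(\gamma) = 1$ forces $P \not\equiv 0$, and the maximum modulus principle then prevents $P$ from vanishing identically on $\{|z| = \rho\}$ (otherwise $P$ would vanish on the whole disk, contradicting $P(\gamma) = 1$ if $|\gamma| = \rho$; in the strict case $|\gamma| > \rho$, a vanishing $P$ on $|z|=\rho$ likewise forces $P \equiv 0$ on $|z|\le\rho$ and by polynomial identity $P \equiv 0$ everywhere). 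After this minor bookkeeping, the result follows directly from Lemma \ref{maximalzn} by the change of variables $z \mapsto \rho z$.
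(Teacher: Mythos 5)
Your argument is correct and is essentially the paper's route: the paper treats this corollary as a reformulation of Lemma \ref{maximalzn} obtained by rescaling to the circle of radius $\rho$ and normalizing at $\gamma$, which is exactly what your substitution $Q(z)=P(\rho z)/M$ together with the extremizer $(z/\gamma)^n$ carries out. The $M>0$ bookkeeping is fine and does not change the substance.
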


\subsubsection{Fastest growth of polynomials bounded on an ellipse}
Define the conformal map 
$\psi : \mathbb{C} \setminus \{ z \in \mathbb{C} : |z| \le 1\} \to \mathbb{C} \setminus [-1,1]$ by
\begin{equation} \label{confmap}
\psi(r) = \frac{r + r^{-1}}{2}.
\end{equation}
By definition, $\psi^{-1}(z)$ is the larger magnitude zero of $r^2 - 2 r z + 1 =0$, which is the zero outside the complex unit disk. For fixed $\rho > 0$, the image of the circle $C_\rho = \{ z \in \mathbb{C}: |z| = \rho\}$ under the map $\psi$ is an ellipse $E_\rho = \psi(C_\rho)$, which intersects the real and imaginary axes at
$$
E_\rho \cap \mathbb{R} = \left\{\pm \frac{\rho + \rho^{-1}}{2}\right\},
\quad \text{and} \quad
E_\rho \cap i \mathbb{R} = \left\{\pm i \frac{\rho - \rho^{-1}}{2} \right\},
$$
which are the vertices and co-vertices of the ellipse, respectively. The following result due to Saad \cite[Theorem 4.9]{saad2011numerical} considers an analogous optimization problem to Corollary \ref{corznmax} for the case of an ellipse, and gives upper and lower bounds.

\begin{lemma}[Saad {\cite[Theorem 4.9]{saad2011numerical}}]
\label{ellipsemax}
Let $\rho > 1$, and $\gamma \in \mathbb{C}$ be any point not enclosed by $E_\rho$. Then, 
$$
\frac{\rho^n}{|r_\gamma|^n} \le \min_{P \in \mathcal{P}_n : P(\gamma) = 1} \max_{z \in E_\rho} |P(z)| \le \frac{\rho^n + \rho^{-n}}{|r_\gamma^n + r_\gamma^{-n}|},
$$
where $r_\gamma = \psi^{-1}(\gamma)$.
\end{lemma}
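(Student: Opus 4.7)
The plan is to prove the two inequalities separately. The upper bound on the minimum will come from exhibiting an explicit extremal polynomial—a normalized Chebyshev polynomial—while the lower bound will use a conformal transplant via the Joukowski map $\psi$ of \eqref{confmap} together with the maximum modulus principle on the exterior of a disk.

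For the upper bound I would take $P_\star(z) = T_n(z)/T_n(\gamma)$. This is admissible because $P_\star(\gamma)=1$, and $T_n(\gamma)\ne 0$: since $|r_\gamma|\ge\rho>1$, the reverse triangle inequality gives $|r_\gamma^n+r_\gamma^{-n}| \ge |r_\gamma|^n - |r_\gamma|^{-n} > 0$. The Joukowski identity $T_n(\psi(r)) = (r^n+r^{-n})/2$ yields both $T_n(\gamma) = (r_\gamma^n+r_\gamma^{-n})/2$ and $\max_{|r|=\rho}|T_n(\psi(r))| = (\rho^n+\rho^{-n})/2$, the latter attained at $r=\rho$. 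Dividing shows $\max_{E_\rho}|P_\star| = (\rho^n+\rho^{-n})/|r_\gamma^n+r_\gamma^{-n}|$, which is the desired upper bound on the minimum.

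For the lower bound, fix any admissible $P \in \mathcal{P}_n$, set $M := \max_{z\in E_\rho}|P(z)|$, and define
$$
h(r) = \frac{P(\psi(r))}{r^n}, \qquad |r|\ge 1.
$$
Since $\psi(r)\sim r/2$ as $r\to\infty$ and $\deg P\le n$, $h(r)$ has the finite limit $a_n/2^n$ at infinity (where $a_n$ is the leading coefficient of $P$), so $h$ extends to a function that is analytic on the spherical exterior $\{|r|\ge 1\}\cup\{\infty\}$. On $|r|=\rho$ the image $\psi(r)$ lies on $E_\rho$, giving $|h(r)|\le M/\rho^n$. Applying the maximum modulus principle on the spherical disk $\{|r|\ge\rho\}\cup\{\infty\}$ (whose boundary is the circle $|r|=\rho$) yields $|h(r)|\le M/\rho^n$ throughout. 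Because $\gamma$ is not enclosed by $E_\rho$, its preimage satisfies $|r_\gamma|\ge\rho$; evaluating at $r_\gamma$ gives $|P(\gamma)|/|r_\gamma|^n = 1/|r_\gamma|^n \le M/\rho^n$, so $M\ge\rho^n/|r_\gamma|^n$. Taking the infimum over admissible $P$ delivers the lower bound.

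The delicate point is the use of the maximum modulus principle on the exterior of a disk. This requires $h$ to be bounded (indeed analytic) at $\infty$, which is precisely where the degree constraint $\deg P\le n$ is essential: without it, $P(\psi(r))$ would grow faster than $r^n$ and $h$ would not extend regularly across infinity. The boundary case $\gamma\in E_\rho$ (so $|r_\gamma|=\rho$) is consistent: both bounds degenerate to $M\ge 1$, matching $P(\gamma)=1$. All remaining manipulations with the Joukowski map and the Chebyshev identity are routine computations.
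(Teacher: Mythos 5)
Your proof is correct. Note that the paper does not prove this lemma at all --- it is quoted directly from Saad \cite[Theorem 4.9]{saad2011numerical} --- so there is no in-paper argument to compare against; what you have written is essentially the standard proof underlying Saad's statement. Your upper bound via the normalized Chebyshev candidate $T_n(z)/T_n(\gamma)$, using $T_n(\psi(r)) = (r^n+r^{-n})/2$ and the fact that $|r^n+r^{-n}|$ on $|r|=\rho$ is maximized at $r=\rho$, is exactly the classical choice, and your check that $T_n(\gamma)\neq 0$ via $|r_\gamma^n+r_\gamma^{-n}|\ge |r_\gamma|^n-|r_\gamma|^{-n}>0$ is the right safeguard. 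Your lower bound is in the same spirit as the paper's Lemma \ref{maximalzn}/Corollary \ref{corznmax} (the extremality of $z^n$ on disks, which is how Saad argues), but you cannot invoke Lemma \ref{maximalzn} verbatim since $P(\psi(r))$ is a Laurent polynomial rather than a polynomial in $r$; your direct construction of $h(r)=P(\psi(r))/r^n$, the observation that the degree bound makes $h$ analytic and bounded at $\infty$, and the maximum modulus principle on the spherical exterior of $|r|=\rho$ handle this correctly. The remaining facts you use --- that $\psi$ maps $|r|=\rho$ onto $E_\rho$ and that points not enclosed by $E_\rho$ have preimage of modulus at least $\rho$ --- are standard properties of the Joukowski map and are stated in the surrounding text of the paper, so the argument is complete.
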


The result of Lemma \ref{ellipsemax} can be equivalently formulated as follows.

\begin{corollary} \label{cor2}
Fix $\rho > 1$. Let $\gamma$ be a point in $\mathbb{C}$ that is not enclosed by $E_\rho$. Then,
$$
\frac{|r_\gamma^n + r_\gamma^{-n}|}{\rho^n + \rho^{-n}}
\le \max_{P \in \mathcal{P}_n : \max_{z \in E_\rho} |P(z)|=1\ } |P(\gamma)| \le 
\frac{|r_\gamma|^n}{\rho^n},
$$
where $r_\gamma = \psi^{-1}(\gamma)$.
\end{corollary}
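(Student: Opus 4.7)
The plan is to reduce Theorem~\ref{ellipseslow} to the standard ellipse family $E_\rho$ (for which Corollary~\ref{cor2} is already available) by a linear rescaling, and then to control the resulting growth ratio by an elementary hyperbolic computation.

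First, I would locate the $\rho > 1$ for which some scaled copy of $E_\rho$ coincides with $\Omega_\delta$. Matching the ratio of co-vertex to vertex gives $\delta = (\rho - \rho^{-1})/(\rho + \rho^{-1})$, so $\rho = \sqrt{(1+\delta)/(1-\delta)}$ and the scaling factor is $c := (\rho + \rho^{-1})/2 = 1/\sqrt{1-\delta^2}$. Setting $Q(z) := P_n(z/c) \in \mathcal{P}_n$ then yields $|Q| \le 1$ on $E_\rho$, with $|P_n(1+\varepsilon)| = |Q(c(1+\varepsilon))|$. Since $c(1+\varepsilon)$ is real and strictly larger than the right vertex $c$ of $E_\rho$ for $\varepsilon > 0$, Corollary~\ref{cor2} applies with $\gamma = c(1+\varepsilon)$, giving
\[
|P_n(1+\varepsilon)| \;\le\; \left(\frac{r_\gamma}{\rho}\right)^{\!n}, \qquad r_\gamma := \psi^{-1}(c(1+\varepsilon)) > \rho.
\]

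Next, I would parameterize $r_\gamma = \rho\, e^{2\tau}$ with $\tau \ge 0$ and extract an implicit relation between $\tau$ and $\varepsilon$. Writing $\rho = e^L$, so that $\tanh(L) = \delta$, the identity $\psi(r_\gamma) = c(1+\varepsilon)$ becomes $\cosh(L + 2\tau) = (1+\varepsilon)\cosh(L)$, which after the addition formula collapses to
\[
2\sinh\tau\,(\sinh\tau + \delta\cosh\tau) \;=\; \varepsilon.
\]
The target $r_\gamma/\rho = e^{2\tau} \le 1 + (3/(2\delta))\varepsilon$ is then equivalent, after using the identity $e^{2\tau} - 1 = 2\sinh\tau \cdot e^\tau$ and substituting the above for $\varepsilon$, to the pointwise inequality
\[
2\delta\, e^\tau \;\le\; 3\sinh\tau + 3\delta\cosh\tau, \qquad \tau \ge 0,\ \delta \in (0,1).
\]
Expanding $e^\tau = \cosh\tau + \sinh\tau$, the right-hand side minus the left equals $(3 - 2\delta)\sinh\tau + \delta\cosh\tau$, which is manifestly non-negative. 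Raising to the $n$th power gives the theorem.

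The step I expect to be most delicate is the derivation of the implicit relation $2\sinh\tau(\sinh\tau + \delta\cosh\tau) = \varepsilon$: it relies on the clean observation that $\tanh(\log\rho) = \delta$, which is exactly what causes the ellipse aspect ratio to appear directly in the growth factor. Once this is in place, the rest of the argument is essentially mechanical, and one mild side effect is that the restriction $\varepsilon < 1$ in the statement is not actually used — the same bound holds for every $\varepsilon > 0$.
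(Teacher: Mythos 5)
There is a genuine gap, and it is structural: the statement under review is Corollary \ref{cor2} itself, but your proposal treats it as ``already available'' and instead uses it to derive Theorem \ref{ellipseslow}. Nothing in your write-up establishes the two-sided bound
$\frac{|r_\gamma^n + r_\gamma^{-n}|}{\rho^n+\rho^{-n}} \le \max\{|P(\gamma)| : \max_{z \in E_\rho}|P(z)| = 1\} \le |r_\gamma|^n/\rho^n$,
so as an argument for the assigned statement it is circular, or rather it proves a different result. The paper's proof of the corollary is a one-step renormalization of Lemma \ref{ellipsemax} (Saad): the two extremal problems
$\min\{\max_{z\in E_\rho}|P(z)| : P \in \mathcal{P}_n,\ P(\gamma)=1\}$ and $\max\{|P(\gamma)| : P \in \mathcal{P}_n,\ \max_{z\in E_\rho}|P(z)|=1\}$
have reciprocal optimal values --- replace $P$ by $P/P(\gamma)$ in one direction and by $P/\max_{E_\rho}|P|$ in the other (noting an extremal $P$ cannot vanish at $\gamma$) --- so inverting the chain of inequalities in Lemma \ref{ellipsemax} turns Saad's lower bound $\rho^n/|r_\gamma|^n$ into the upper bound $|r_\gamma|^n/\rho^n$ and his upper bound into the lower bound $|r_\gamma^n+r_\gamma^{-n}|/(\rho^n+\rho^{-n})$. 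That reciprocity is the entire content of the corollary, and it is exactly what your proposal omits.

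For what it is worth, the material you did write is sound: granting Corollary \ref{cor2}, the rescaling $Q(z)=P_n(z/c)$ with $c=(\rho+\rho^{-1})/2$, the relation $2\sinh\tau\,(\sinh\tau+\delta\cosh\tau)=\varepsilon$ obtained from $\cosh(L+2\tau)=(1+\varepsilon)\cosh L$ with $\tanh L=\delta$, and the elementary inequality $2\delta e^\tau \le 3\sinh\tau+3\delta\cosh\tau$ correctly yield $|P_n(1+\varepsilon)|\le\bigl(1+\tfrac{3}{2\delta}\varepsilon\bigr)^n$, i.e.\ a valid (and arguably cleaner) proof of Theorem \ref{ellipseslow} than the paper's route through Corollary \ref{cor3} and Lemma \ref{thmboundellipse}, and one that dispenses with the restriction $\varepsilon<1$. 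But that theorem is downstream of the statement you were asked to prove, not the statement itself.
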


In the following, we state a rescaled version of Corollary \ref{cor2} for the ellipse
\begin{equation} \label{eq:deffrho}
F_\rho := \left\{ \frac{2}{\rho + \rho^{-1}} z \in \mathbb{C} : z \in E_\rho \right\},
\end{equation}
which has vertices $\pm 1$ and co-vertices $\pm i (\rho - \rho^{-1})/(\rho + \rho^{-1})$.

\begin{corollary} \label{cor3}
Fix $\rho > 1$. Let $\gamma$ be a point in $\mathbb{C}$ that is not enclosed by $F_\rho$ defined in  \eqref{eq:deffrho}. Then,
$$
\frac{|s_\gamma^n + s_\gamma^{-n}|}{\rho^n + \rho^{-n}} \le
\max_{P \in \mathcal{P}_n : \max_{z \in F_\rho} |P(z)|=1\ } |P(\gamma)| \le 
\frac{|s_\gamma|^n}{\rho^n},
$$
where $s_\gamma = \psi^{-1}\left( \gamma \frac{\rho + \rho^{-1}}{2} \right)$.
\end{corollary}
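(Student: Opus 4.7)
The plan is to deduce Corollary \ref{cor3} from Corollary \ref{cor2} by a simple linear change of variables. By \eqref{eq:deffrho}, the ellipse $F_\rho$ is obtained from $E_\rho$ by the dilation $z \mapsto c z$, where $c := 2/(\rho + \rho^{-1})$. This dilation induces a bijection on $\mathcal{P}_n$ that preserves both the maximum modulus condition (after accounting for the ellipse on which the supremum is taken) and the normalization at a distinguished point.

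First, I would set up the correspondence: given $P \in \mathcal{P}_n$, define $Q(w) := P(c w)$. Then $Q \in \mathcal{P}_n$, and the substitution $z = c w$ yields
$$
\max_{z \in F_\rho} |P(z)| = \max_{w \in E_\rho} |Q(w)|.
$$
Moreover, if $\gamma \notin F_\rho$, then $\gamma' := \gamma / c = \gamma \cdot (\rho + \rho^{-1})/2$ lies outside $E_\rho$, and $P(\gamma) = Q(\gamma')$. Conversely, every $Q \in \mathcal{P}_n$ arises as $Q(w) = P(cw)$ for a unique $P$, so this is a genuine bijection. Consequently,
$$
\max_{P \in \mathcal{P}_n : \max_{z \in F_\rho} |P(z)| = 1} |P(\gamma)| \;=\; \max_{Q \in \mathcal{P}_n : \max_{w \in E_\rho} |Q(w)| = 1} |Q(\gamma')|.
$$

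Second, I would apply Corollary \ref{cor2} to $Q$ at the point $\gamma'$, which yields
$$
\frac{|r_{\gamma'}^n + r_{\gamma'}^{-n}|}{\rho^n + \rho^{-n}} \;\le\; \max_{Q \in \mathcal{P}_n : \max_{w \in E_\rho} |Q(w)| = 1} |Q(\gamma')| \;\le\; \frac{|r_{\gamma'}|^n}{\rho^n},
$$
where $r_{\gamma'} = \psi^{-1}(\gamma')$. By the definition in the statement of Corollary \ref{cor3}, $s_\gamma = \psi^{-1}\bigl(\gamma (\rho + \rho^{-1})/2\bigr) = \psi^{-1}(\gamma') = r_{\gamma'}$. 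Combining this identification with the bijection from the previous step yields the desired two-sided bound on $\max_{P} |P(\gamma)|$ and completes the proof.

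I do not expect any real obstacle here: the result is essentially a restatement of Corollary \ref{cor2} after a linear rescaling of the complex plane, and all that needs verification is that (i) the dilation $z \mapsto cz$ maps $\mathcal{P}_n$ to itself (immediate, since it is a polynomial substitution of degree one), (ii) it maps $F_\rho$ onto $E_\rho$ (immediate from \eqref{eq:deffrho}), and (iii) $\gamma'$ lies outside $E_\rho$ whenever $\gamma$ lies outside $F_\rho$ (a direct consequence of (ii)). With these routine observations, Corollary \ref{cor3} follows.
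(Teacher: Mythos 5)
Your proposal is correct and matches the paper's approach: the paper introduces Corollary \ref{cor3} precisely as ``a rescaled version of Corollary \ref{cor2}'' for the ellipse $F_\rho$, and your dilation $z \mapsto \frac{2}{\rho+\rho^{-1}}z$ with the identification $s_\gamma = r_{\gamma'}$ is exactly the implicit change of variables. No gaps.
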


We are now prepared to prove the key result of this section, Lemma \ref{thmboundellipse}, from which Theorem \ref{ellipseslow} follows by a change of variables. The main idea of the proof is to set $\gamma = 1+\varepsilon$, apply Corollary \ref{cor3}, and estimate an upper bound.

\begin{lemma}  \label{thmboundellipse}
Fix $1 > \varepsilon > 0$, $\rho > 1$, and $n \in \mathbb{N}$. Suppose that $P \in \mathcal{P}_n$ satisfies $|P(z)| \le 1$ for $z \in F_\rho$, where $F_\rho$ is defined in  \eqref{eq:deffrho}.  Then,
$$
|P(1+\varepsilon)| \le \left( 1 + \frac{3(\rho^2 +1)}{2(\rho^2 - 1)} \varepsilon \right)^n.
$$
\end{lemma}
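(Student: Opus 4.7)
The plan is to apply Corollary \ref{cor3} with $\gamma = 1 + \varepsilon$. Since $F_\rho$ has vertices $\pm 1$, the point $1+\varepsilon$ lies outside $F_\rho$ for $\varepsilon > 0$, so Corollary \ref{cor3} yields
$$|P(1+\varepsilon)| \le (s_\gamma/\rho)^n, \quad \text{where} \quad s_\gamma = \psi^{-1}\!\left((1+\varepsilon)\,a\right),$$
with $a = (\rho+\rho^{-1})/2$. Hence the entire task reduces to proving the scalar inequality
$$\frac{s_\gamma}{\rho} \;\le\; 1 + \frac{3(\rho^2+1)}{2(\rho^2-1)}\,\varepsilon.$$

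To compute $s_\gamma$ explicitly, I would use that $\psi^{-1}(w) = w + \sqrt{w^2-1}$ is the larger root of $r^2-2wr+1=0$ for $w > 1$. Setting $b = (\rho-\rho^{-1})/2$, so that $a^2 - b^2 = 1$ and $a+b = \rho$, the identity $(1+\varepsilon)^2 a^2 - 1 = b^2 + (2\varepsilon+\varepsilon^2)a^2$ gives
$$s_\gamma \;=\; (1+\varepsilon)a + \sqrt{\,b^2 + (2\varepsilon+\varepsilon^2)a^2\,}.$$
Now I would apply the elementary bound $\sqrt{b^2+x} \le b + x/(2b)$, valid for all $x \ge 0$, together with $2\varepsilon+\varepsilon^2 \le 3\varepsilon$ (which uses $\varepsilon < 1$), to get
$$s_\gamma \;\le\; (a+b) + \varepsilon a + \frac{3\varepsilon a^2}{2b} \;=\; \rho + \varepsilon\!\left(a + \frac{3a^2}{2b}\right).$$

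The last step is algebraic simplification: dividing by $\rho$, substituting $a = (\rho^2+1)/(2\rho)$ and $b = (\rho^2-1)/(2\rho)$, a direct calculation yields
$$\frac{1}{\rho}\!\left(a + \frac{3a^2}{2b}\right) \;=\; \frac{(\rho^2+1)(5\rho^2+1)}{4\rho^2(\rho^2-1)},$$
and this is bounded above by $\frac{3(\rho^2+1)}{2(\rho^2-1)}$ precisely because $5\rho^2+1 \le 6\rho^2$ for $\rho \ge 1$. Substituting back and raising to the $n$-th power gives the claimed bound.

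I anticipate the main obstacle to be obtaining the clean closed-form factor $\frac{3(\rho^2+1)}{2(\rho^2-1)}$; the naive linearization at $\varepsilon = 0$ yields only the factor $\frac{\rho^2+1}{\rho^2-1}$, and to absorb the quadratic correction over the whole range $\varepsilon \in (0,1)$ one must use an inequality like $2\varepsilon+\varepsilon^2 \le 3\varepsilon$ and then verify that the resulting constant is still dominated by $3(\rho^2+1)/(2(\rho^2-1))$ — which, as noted above, reduces to the trivial inequality $\rho \ge 1$.
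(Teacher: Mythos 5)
Your proof is correct and follows essentially the same route as the paper's: apply Corollary \ref{cor3} with $\gamma = 1+\varepsilon$, compute $s_\gamma$ explicitly via $\psi^{-1}(w) = w + \sqrt{w^2-1}$, and bound the square root by the tangent line. The only difference is bookkeeping — you absorb the $\varepsilon^2$ correction into $3\varepsilon$ up front and then verify $\frac{(\rho^2+1)(5\rho^2+1)}{4\rho^2(\rho^2-1)} \le \frac{3(\rho^2+1)}{2(\rho^2-1)}$, whereas the paper keeps the quadratic term and bounds it at the end using $\frac{\rho^2+1}{4\rho^2}\varepsilon \le \frac{1}{2}$; both reduce to $\rho^2 \ge 1$ together with $\varepsilon < 1$.
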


\begin{proof}[Proof of Lemma \ref{thmboundellipse}]
By Corollary \ref{cor3}, we have
$$
\max_{P \in \mathcal{P}_n : \max_{z \in F_\rho} |P(z)|=1\ } |P(\gamma)| \le 
\frac{|s_\gamma|^n}{\rho^n}.
$$
By definition,
\begin{equation}  \label{eq:sgammarho}
\frac{s_\gamma}{\rho} 
= \frac{(1+\varepsilon)\frac{\rho + \rho^{-1}}{2} + 
\sqrt{\left((1+\varepsilon)\frac{\rho + \rho^{-1}}{2} \right)^2 - 1}
}{\rho}.
\end{equation}
Observe that
$$
\sqrt{\left((1+\varepsilon)\frac{\rho + \rho^{-1}}{2} \right)^2 - 1} = \frac{\rho - \rho^{-1}}{2} \sqrt{1 + (2 \varepsilon + \varepsilon^2) \frac{(\rho + \rho^{-1})^2}{(\rho - \rho^{-1})^2}}.
$$
Recall that
$
\sqrt{1+x} \le 1 + {x}/{2}$, for $x > 0$.
Hence
$$
\sqrt{\left((1+\varepsilon)\frac{\rho + \rho^{-1}}{2} \right)^2 - 1} \le 
\frac{\rho - \rho^{-1}}{2} + \frac{2 \varepsilon + \varepsilon^2}{2} \frac{(\rho + \rho^{-1})^2}{2(\rho - \rho^{-1})}.
$$
Using this inequality, we deduce from \eqref{eq:sgammarho} that
\begin{align}
\frac{s_\gamma}{\rho} 
&= \frac{(1+\varepsilon)\frac{\rho + \rho^{-1}}{2} + 
\sqrt{\left((1+\varepsilon)\frac{\rho + \rho^{-1}}{2} \right)^2 - 1}
}{\rho} \notag \\
&\le 
\frac{(1+\varepsilon)\frac{\rho + \rho^{-1}}{2} + 
\frac{\rho - \rho^{-1}}{2} + \frac{2 \varepsilon + \varepsilon^2}{2} \frac{(\rho + \rho^{-1})^2}{2(\rho - \rho^{-1})}
}{\rho} \notag \\
&= 1 + \frac{\rho^2 + 1}{\rho^2 -1} \varepsilon + \frac{(\rho^2 +1)^2}{4 \rho^2 (\rho^2 -1)} \varepsilon^2, \label{eq:finaleq}
\end{align}
where the final equality follows from rearranging terms. Since we assumed $1 > \varepsilon > 0$ and $\rho  > 1$, we have
$$
\frac{\rho^2 +1}{4 \rho^2} \varepsilon \le \frac{1}{2}.
$$
Combining this estimate with \eqref{eq:finaleq}, we conclude that
$$
\frac{|s_\gamma|}{\rho} \le 1 + \frac{3(\rho^2+1)}{2(\rho^2-1)} \varepsilon,
$$
which completes the proof. 
\end{proof}

\begin{remark}
Note that Theorem \ref{ellipseslow} can be seen as a direct consequence of Lemma \ref{thmboundellipse} by choosing $\rho > 1$ such that
$$
\delta = \frac{\rho - \rho^{-1}}{\rho + \rho^{-1}} = \frac{\rho^2 - 1}{\rho^2 + 1},
$$
which is possible since we assumed $1 > \delta > 0$.
\end{remark}

\subsection{Proof of Theorem \ref{thm:general-momentum-convergence}} \label{sec:proofthm:general-momentum-convergence}
\begin{proof}[Proof of  Theorem \ref{thm:general-momentum-convergence}]
From Lemma \ref{lem:general-momentum-works}, performing $N$ iterations of Algorithm \ref{alg:generalized-momentum} computes 
$\vect{w}_N = P_N(\vect{A}/\lambda_*)\vect{v}_0$.  
Expressing $\vect{v}_0$ in an eigenbasis of $\vec{A}$ 
yields
\[
\vect{w}_N =  P_N(\vect{A}/\lambda_*)\left( \sum_{j = 1}^n a_j\vect{\phi}_j\right) =  \sum_{j = 1}^n P_N(\vect{A}/\lambda_*)a_j\vect{\phi}_j,
\] 
where each $\vect{\phi}_j$ satisfies 
$P_N(\vect{A}/\lambda_*)\vect{\phi}_j = P_N(\lambda_j/\lambda_*)\vect{\phi}_j.$  Thus,
\begin{align}\label{eqn:th21-wN0}
\vect{w}_N = \sum_{j = 1}^n P_N(\lambda_j/\lambda_*)a_j\vect{\phi}_j.
\end{align}
Since $\lambda_1/\lambda_* > 1$ and $\lambda_j/\lambda_* \in \Gamma$ for all $j \geq 2$, Theorem \ref{thm:boundedrapid} guarantees the existence of constants $C$ and $c$ such that 
\[
P_N(\lambda_1/\lambda_*) \geq c\left(1 + \sigma^{-1}\sqrt{2 \varepsilon_\ast}\right)^N,
~\text{ where }~ \varepsilon_* = 
\frac{\lambda_1}{\lambda_*} - 1,
\]
and $|P_N(\lambda_j/\lambda_*)| \leq C \text{ for } j \geq 2.$
Applying this to \eqref{eqn:th21-wN0} we have
\begin{equation}\label{eqn:th21-leftright}
\vect{w}_N = {a_1P_N(\lambda_1/\lambda_*)}
\left( \vect{\phi}_1 + R_N \right), ~\text{ with } R_N =  \sum_{j=2}^n \frac{a_jP_N(\lambda_j/\lambda_*)}{a_1P_N(\lambda_1/\lambda_*)}\vect{\phi}_j.
\end{equation}
The remainder term $R_N$ satisfies
\begin{align}\label{eqn:th21-e}
    \|R_N\|_2
    \leq \frac{1}{|P_N(\lambda_1/\lambda_*)|}\sum_{j = 2}^n \left| \frac{a_j}{a_1} \right| \cdot \left| P_N(\lambda_j/\lambda_*) \right| \cdot\|\vect{\phi}_j\|_2 
     \leq \widehat C 
    \left( 1 + \sigma^{-1}\sqrt{2\varepsilon_\ast}\right)^{-N},
    \end{align}
with $\widehat C = c^{-1}C \sum_{j=2}^n \left| {a_j}/{a_1} \right|.$
Here, $C$ and $c$ are the respective boundedness and rapid growth constants from Theorem \ref{thm:boundedrapid}, which depend only on $(p_0, \dots, p_m)$, and each ratio $|a_j/a_1|$ depends on the initial iterate $\vect{v}_0$.
The bound on the remainder term \eqref{eqn:th21-e} shows that $\vect{w}_N$ aligns with the dominant eigenvector $\vec{\phi}_1$ as $N$ increases. Applying \eqref{eqn:th21-leftright} and \eqref{eqn:th21-e} to 
$\vec{x}_N = \vec{w}_N/\|\vec{w}_N\|_2$ yields \eqref{eqn:thm21}.
\end{proof}

\section{Conclusion}\label{sec:conc}
In this paper, we constructed a family of polynomials $P_k(z)$ related to the Faber polynomials and defined by recurrence relations \eqref{recurrence}, based on mean-zero random walks. We established how these polynomials can be used to approximate $z^n$ by a polynomial of degree $\sim \sqrt{n}$ in associated radially convex domains in the complex plane, referred to as the stability region for that polynomial. We showed that each constructed sequence of polynomials remains bounded within its stability region and has a rapid growth property outside of it. 

Based on the recursive polynomial construction together with the boundedness and rapid growth properties, we developed an acceleration method for the power iteration that applies to a wide class of nonsymmetric matrices.  Our theory shows the method converges to the dominant eigenmode for problems where the possibly complex subdominant eigenvalues are restricted to the polynomial's stability region. This extends the momentum accelerated methods based on Chebyshev polynomials as in \cite{austin2024dynamically,xu2018accelerated} to a range of nonsymmetric problems. We further show this method can outperform Chebyshev-based methods for problems with complex eigenvalues. Additionally, we demonstrated a dynamic implementation of the method similar to that in \cite{austin2024dynamically,cowal2025faber} that achieves optimal convergence without explicit knowledge of $\lambda_2$, which the optimal parameters of the static version of the method depend upon. We expect the ideas and methods introduced in this paper extend to applications beyond the simple power iteration, and may be useful in other iterative methods in numerical linear algebra for nonsymmetric problems.

\bibliographystyle{plain}
\bibliography{refs}

\end{document}